\documentclass[10pt]{article}
\usepackage[paperwidth=8.1in,paperheight=11in,top=1.1in, bottom=1.11in, left=1.00in, right=1.00in]{geometry}
\usepackage{amsthm,amsmath,amssymb,mathtools}  
\usepackage{chngcntr}
\usepackage{amsmath}  
\usepackage{graphicx}                                                         
\usepackage{caption}
\usepackage{subfigure}
\usepackage{enumitem}                                                       
\usepackage{hyperref}
\usepackage[mathscr]{eucal}                                            
\usepackage[usenames,dvipsnames,svgnames,table]{xcolor}
\usepackage[normalem]{ulem}                                         
\usepackage{pstricks}
\usepackage{rotating}
\usepackage{lscape}
\usepackage{dsfont}
\usepackage{dsfont}
\mathtoolsset{showonlyrefs=true}                                    
\theoremstyle{plain}
\usepackage{relsize}
\newtheorem{theorem}{Theorem}
\numberwithin{theorem}{section}

\newtheorem{lemma}[theorem]{Lemma}          
 \newtheorem{proposition}[theorem]{Proposition}
\newtheorem{corollary}[theorem]{Corollary}
\theoremstyle{definition}

\newtheorem{remark}[theorem]{Remark}
\newtheorem{assumption}[theorem]{Assumption}

\DeclarePairedDelimiter\ceil{\lceil}{\rceil}

\newcommand\Eb{\mathbb{E}}

\newcommand\Rb{\mathbb{R}}

\newcommand\R{\Rb}
\newcommand\E{\Eb}

\newcommand\Lc{\mathscr{L}}

\newcommand\dd{\mathrm{d}}
\newcommand\ee{\mathrm{e}}

\newcommand\ess{\textrm{ess}}
\newcommand\ind{\mathbf{1}}

\newcommand\eps{\varepsilon}

\def \s {{\sigma}}
\def \g {{\gamma}}
\def \a {{\alpha}}
\def \b {{\beta}}

\setlength\marginparwidth{20mm}
\definecolor{blue}{rgb}{0,0,1}

\setlength\marginparwidth{20mm}
\definecolor{violet}{rgb}{1,0,1}

\newcommand \aX{\bar{\alpha}}
\newcommand \bX{\bar{\beta}}

\newcommand{\eqlnostar}[2]{\begin{align}\label{#1}#2\end{align}}
\newcommand{\eqstar}[1]{\begin{align*}#1\end{align*}}

\begin{document}

\title{A Fourier-based Picard-iteration approach for a class of McKean-Vlasov SDEs with L\'evy jumps}

\author{
Ankush Agarwal
\thanks{Adam Smith Business School, University of Glasgow, University Avenue, G128QQ Glasgow, United Kingdom. Email: {\tt  ankush.agarwal@glasgow.ac.uk}.}
\and
Stefano Pagliarani
\thanks{Dipartimento di Scienze Economiche e Statistiche, Universit\`{a} di Udine, Via Tomadini 30/a, 33100 Udine, Italy. Email: {\tt stefano.pagliarani@uniud.it}.}
}
\date{This version: \today}
\maketitle

\begin{abstract}
We consider a class of L\'{e}vy-driven stochastic differential equations (SDEs) with McKean-Vlasov (MK-V) interaction in the drift coefficient. It is assumed that the coefficient is bounded, affine in the state variable, and only measurable in the law of the solution. We study the equivalent functional fixed-point equation for the unknown time-dependent coefficients of the associated Markovian SDE. By proving a contraction property for the functional map in a suitable normed space, we infer existence and uniqueness results for the MK-V SDE, and derive a discretized Picard iteration scheme that approximates the law of the solution through its characteristic function. Numerical illustrations show the effectiveness of our method, which appears to be appropriate to handle the multi-dimensional setting.
\newline
\noindent \textbf{Keywords}: { Nonlinear stochastic differential equations, L\'{e}vy processes, McKean-Vlasov model,	 Picard iteration, Fourier transform}
\newline
\noindent \textbf{2010 Mathematics Subject Classification}: {65C30, 65T99, 65Q20 }
\end{abstract}

\section{Introduction}

We study a particular class of McKean-Vlasov (MK-V) stochastic differential equations (SDEs) where the coefficients are functions of both the state variable and the law of the solution. Introduced by McKean in the $60$'s, these equations have received increasing attention in the last few decades due to their wide range of applications in several fields, which include physics, neurosciences, economics and finance among others. In particular, the link with mean-field interacting particle systems, and the advent of mean-field games, have boosted the research on MK-V SDEs. 

From the numerical perspective, the study of the solutions to MK-V SDEs has been mainly conducted by exploiting the so-called ``propagation of chaos" results. It is usually shown that, in the limit $N \to \infty$, the empirical law of a Markovian system with $N$ interacting particles, converges to the law of the solution to the MK-V SDE under study, which can be then approximated via time-discretization and simulation. Following the work by Sznitman in \cite{Sznitman91}, where the first propagation of chaos result was proved, many authors have contributed to this stream of literature, see, for example, \cite{anto:koha:02}, \cite{tala:vail:03}, \cite{haji2018multilevel}, and \cite{sun2017ito}. Although a very powerful approximating tool, the simulation of large-particle systems can be computationally very expensive. For this reason, several authors investigated alternative approaches to the resolution of MK-V SDEs. Szpruch et.\ al.\ \cite{szpruch2017iterative} provided an alternative iterative particle representation that can be combined with Multilevel Monte Carlo techniques in order to simulate the solutions. In \cite{sun2017ito}, Sun et.\ al.\ developed It\^{o}-Taylor schemes of Euler- and Milstein-type for numerically estimating the solution of MK-V SDEs with Lipschitz regular coefficients and square integrable initial law. Gobet and Pagliarani \cite{gobet2016analytical} recently developed analytical approximations of the transition density of the solutions by extending a perturbation technique that was previously developed for standard SDEs. In \cite{chau:garc:15}, Chaudru De Raynal and Garcia Trillos developed a cubature method to obtain estimates for the solution of forward-backward SDEs of MK-V type. Furthermore, in \cite{belomestny2017projected}, Belomestny and Schoenmakers developed a novel projection-based particle method and tested it for a class of MK-V SDEs that includes some of the cases considered in this paper. 

In this work, we propose a Picard-iteration scheme for a class of MK-V SDEs driven by a L\'{e}vy process. Precisely, we assume a linear mean-field interaction through the law (via expectation of a measurable function), with a drift coefficient that is affine in the state variable. We reformulate the problem as an equivalent fixed-point equation for the unknown time-dependent coefficients of the related Markovian SDE. Exploiting a priori estimates on the characteristic function of the solution, we show that the related functional map is a contraction. This implies existence and uniqueness of the solution to the MK-V SDE. We then discretize the functional map in order to obtain a fully implementable Picard-iteration scheme to accurately approximate the functions that determine the law of the solution. We also provide the rate of convergence of our scheme with respect to both the time-discretization step and number of Picard iterations, and show that it is independent of the dimension.

In our setting, we consider a general underlying L\'{e}vy process and a general initial law. To deal with this level of generality for the L\'{e}vy measure, we show the contraction result by proving suitable estimates in the Fourier space. For this reason, we need a weak assumption on integrability of the initial moment and the L\'{e}vy measure, namely finiteness of the first moment. Also, our initial assumptions require the drift coefficients, as well as their Fourier transforms, to be in $L^1.$ However, we are able to overcome this limitation by using the so-called \emph{damping method} (see, for example, \cite{dubner1968numerical}). By a suitable modification of the functions appearing in the drift coefficients using penalization functions, and exploiting the properties of Fourier transform derivatives, we are able to prove the contraction property and all the consequent results in the case of non-integrable (though bounded) coefficients. However straightforward and effective, this strategy to circumvent the $L^1$ assumption on the coefficients does not seem completely satisfactory. In fact, on the one hand, this generalization requires additional hypotheses, namely higher order integrability of the underlying L\'{e}vy process and the initial law. On the other hand, while it is possible to drop the integrability assumption on the coefficients, we cannot avoid requiring the Fourier transform of the \emph{damped} (penalized) coefficients being in $L^1$, save the the case $d=1$ for which the latter assumption can be also dropped. It is important to point out that all these restrictions do not have a clear probabilistic interpretation, but they rather seem to be related to our choice of carrying out the analysis in the Fourier space. Although this approach offers a great deal of advantage in that it allows us to deal with a generic L\'evy measure, it may not be the optimal choice in order to prove the contraction property under minimal assumptions, at least in some specific cases. In order to substantiate this claim, in Section \ref{example: multiple dimensions 2} we demonstrate numerically that our discretized Picard iteration method converges with the expected rate even when choosing an initial law with infinite first moment. We aim to come back on this point in a future work, where we plan to extend the approach to more general coefficients (not necessarily linear in the state variable) by performing the analysis in the original space as opposed to the Fourier one. 

Although the class of MK-V SDEs approximated in this paper is relatively small, we point out that the idea of translating the mean-field SDE into a fixed-point equation on the coefficients space turns out to be effective, and leads to a Picard iteration method that is numerically efficient. Moreover, it allows for generic L\'{e}vy jumps in the dynamics. To the best of our knowledge, most numerical methods available in the literature for MK-V equations with L\'evy jumps, make use of propagation of chaos results and simulation of the related large-particle system (see, for example, \cite{andreis2017mckean} \cite{jourdain08Levy} and \cite{graham1992mckean}). Moreover, even though the existence-uniqueness results for MK-V SDEs available in the literature have reached a great level of generality that goes far beyond the setting considered here, most existing results make use of Wasserstein metrics, and thus require finiteness of first or second moments of the initial distribution (see, for example, \cite{jourdain08Levy} and \cite{hao2016mean} in the case of L\'evy jumps). As discussed above, our approach shows that integrability of the initial law is not crucial for the class considered here, neither for what concerns the existence nor numerical approximation of the solutions. 

The Picard-iteration approach developed here paves the way to handle more general instances of MK-V SDEs. In particular, a more general dependence (nonlinear) of the coefficients on both law and state-variable could be considered, by making use of parametrix estimates on the density kernels in both purely- and jump-diffusion settings. Another interesting extension consists in dropping the ellipticity hypothesis on the diffusion coefficient, so as to consider degenerate MK-V diffusions. This would involve the study of time-dependent H\"ormander-type conditions. Along with the above extensions, in future works, we also aim to consider the case of common noise, such as common Brownian motion and/or Levy jumps, for which propagation of chaos results have been recently obtained in \cite{andreis2017mckean}.  

This paper is organized as follows: In Section \ref{section:mckean SDE}, we introduce the MK-V SDE under study and provide the results on characteristic function of the solution. In Section \ref{section:contraction property}, we prove the contraction property of the functional mapping which provides the existence and uniqueness result of the solution. We prove the contraction property for $L^1$ coefficients in Section \ref{section:contraction L1}. We introduce the damping method and prove the contraction property for bounded functions in Section \ref{sec:damping}. The discretized Picard iteration scheme to obtain solution estimates and its rate of convergence is presented in Section \ref{section: discrete Picard}. Numerical experiments to validate our theoretical results are illustrated in Section \ref{section:numerics}. The proofs of intermediate lemmas and a priori estimates are relegated to Appendix \ref{app:proof_ito} and \ref{app:estimates_apriori} respectively. 

\section{Linear MK-V SDEs with jumps}
\label{section:mckean SDE}
Let us consider the following MK-V SDE on $\R^d$
\begin{equation}\label{eq:mckean_SDE_jumps}
\left\{
\begin{aligned}
\dd X_t  &=  \Eb[ a(X_t) x + b(X_t)]|_{x=X_t} \dd t + \dd L_t , \quad  t>0, \\
X_0 &= Y,
\end{aligned}
\right. 
\end{equation}
where $Y$ is a $\R^d$-valued random variable,  $a:\R^d\to \mathcal{M}^{d\times d}$, $b:\R^d\to \R^{d}$, and $L$ is a $d$-dimensional L\'{e}vy process with characteristic triplet $\big(0, \theta, \nu(\dd y)\big)$, meaning that 
\begin{equation}\label{eq:levy_proc}
L_t = \sigma W_t +\int_0^t \int_{|y| \geq 1} y  N(\dd s,\dd y) + \int_0^t \int_{|y|< 1} y  \big( N(\dd s,\dd y) - \nu(\dd y)\dd s \big) .
\end{equation}
In the above, ${N}(\dd s,\dd y)$ is a $d$-dimensional Poisson measure with compensator $\nu(\dd y)\dd s$, $W$ is a $q$-dimensional Brownian motion, and $\sigma$ is a $d\times q$ matrix such that $\s \s^\top = \theta$ is positive definite. 

We observe that solving \eqref{eq:mckean_SDE_jumps} up to time $T>0$ is equivalent to finding measurable functions $\alpha:[0,T] \to \mathcal{M}^{d\times d} $ and $\beta:[0,T] \to \mathbb{R}^{d} $ that solve the following MK-V fixed-point equation
\begin{equation}
\label{eq:ste15}
(\a_t,\b_t) =    \Big( \Eb \big[ a\big(X^{(\alpha,\beta)}_t\big) \big], \Eb \big[ b\big(X^{(\alpha,\beta)}_t\big) \big]  \Big)  =: \Psi_{t}(\a,\b), \quad t\in[0,T],
\end{equation}
where $X^{(\alpha,\beta)}$ denotes the solution to
\begin{equation}\label{eq:mckean_SDE_particular}
\left\{
\begin{aligned}
\dd X^{(\alpha,\beta)}_t   &= \big( \alpha_t X^{(\alpha,\beta)}_t  +\beta_t  \big) \dd t + \dd L_t , \quad  t\in]0,T], \\
X^{(\alpha,\beta)}_0 &=Y.
\end{aligned}
\right. 
\end{equation}
Clearly, $(\a,\b)$ solves \eqref{eq:ste15} if and only if $X^{(\a,\b)}$ solves MK-V SDE \eqref{eq:mckean_SDE_jumps}. Since the distribution of $X^{(\a,\b)}_t$ can be analytically characterized if $(\a,\b)\in L^{\infty}([0,T])$, we will look for solutions in this class. Thus, we will look at $\Psi=(\Psi_1,\Psi_2)$ defined in \eqref{eq:ste15} as an operator from\footnote{$L^{\infty}([0,T]:\mathcal{M}^{d\times d}\times \R^d)$ denotes the space of $L^{\infty}([0,T])$ functions with values on $\mathcal{M}^{d\times d}\times \R^d$.} $L^{\infty}([0,T]:\mathcal{M}^{d\times d}\times \R^d)$ onto itself which is equipped with the family of norms
\eqstar{
\|\gamma\|_{T,\lambda} := \max\{\|\alpha\|_{T,\lambda},\|\beta\|_{T,\lambda}\}, \, \lambda>0, \quad \gamma=(\alpha,\beta)\in L^{\infty}([0,T]:\mathcal{M}^{d\times d}\times \R^d),
}
where
\eqstar{
\|\alpha\|_{T,\lambda} &:= \ess\sup_{t\in [0,T]}  \ee^{-\lambda t}|\alpha_t| , \quad \alpha\in L^{\infty}([0,T]:\mathcal{M}^{d\times d}),\\
\|\beta\|_{T,\lambda} &:=\ess\sup_{t\in [0,T]}  \ee^{-\lambda t}|\beta_t|  ,  \quad \beta\in L^{\infty}([0,T]:\R^d),
}
Above, $|\cdot|$ represents either the spectral or Euclidean norm, depending on whether it is applied to an element of $\mathcal{M}^{d\times d}$ or $\R^d$, respectively. Hereafter, we denote the marginal law 
of the solution to \eqref{eq:mckean_SDE_particular}  by $\mu_{X^{(\a,\b)}_t}.$
\subsection{Preliminaries on linear SDEs with jumps}
\label{section:prelims}

For any probability measure $\mu$ on $\mathcal{B}(\R^d)$, we define its Fourier transform as
\begin{equation}
\hat{\mu}(\eta) :=\int_{\R^d}\ee^{i \langle \eta , x\rangle} \mu( \dd x) ,\quad \eta\in\R^d.
\end{equation}
Analogously, for any function $f\in L^1(\R^d)$ we define its the Fourier transform as
\begin{equation}
\hat{f}(\eta) :=\int_{\R^d} \ee^{i \langle\eta, x\rangle} f(x) \dd x, \quad \eta\in\R^d,
\end{equation}
and, the inverse Fourier transform as
\eqstar{
f(x) =\frac{1}{(2 \pi)^d} \int_{\R^d}\ee^{-i \langle\eta, x\rangle} \hat{f}(\eta) \dd \eta, \quad x\in\R^d.
}
The inverse Fourier transform makes sense if $\hat{f}\in L^1(\R^d).$ In general, we will resort to Plancherel's theorem which states that if $f,g\in L^1(\R^d)\cap L^2(\R^d)$, then $\hat{f},\hat{g}\in L^2(\R^d)$ and 
\begin{equation}
\int_{\R^d} f(x) g(x) \dd x = \frac{1}{(2\pi)^d} \int_{\R^d} \hat{f}(\eta)  \hat{g}(-\eta) \dd \eta. 
\end{equation}

In Lemma \ref{lem:ito} below, we have the first preliminary result regarding the Fourier transform of $\mu_{X^{(\a,\b)}_t}.$ It is a standard result  but due to the lack of a precise statement in the literature, we provide the proof, which is reported in  Appendix \ref{app:proof_ito}.
\begin{lemma}
\label{lem:ito}
For any $T>0$ and $(\alpha,\beta)\in L^{\infty}([0,T]:\mathcal{M}^{d\times d}\times \R^d)$, the Fourier transform of the law $\mu_{X^{(\a,\b)}_t}$ is given as
\begin{equation}\label{eq:ste13}
\hat{\mu}_{X^{(\a,\b)}_t}( \eta) = \exp\Big(\! -\frac{1}{2}\langle \eta, C^{\alpha}_t\eta\rangle + i\,\langle  \eta, m^{\alpha,\beta}_t\rangle +   n^{\alpha}_t(\eta)  \Big) \hat{\mu}_Y \big(  (\Phi^{\alpha}_{0,t})^\top \eta\big),
\end{equation}
where
\eqlnostar{eq:ste10}{
m^{\alpha,\beta}_t &=   \int_0^t  \Phi^{\alpha}_{s,t} \b_s \dd s, \qquad C^{\alpha}_t=   \int_0^t \Phi^{\alpha}_{s,t} \,\s \s^\top (\Phi^{\alpha}_{s,t})^\top  \,\dd s,\\
\label{eq:ste10bis}
n^{\alpha}_t(\eta) &= \int_{0}^t \int_{\R^d} \Big( \ee^{i \langle  \eta , \Phi^{\alpha}_{s,t} y\rangle} -1 -  {\bf 1}_{\{|y|<1\}} i\big\langle   \eta, \Phi^{\alpha}_{s,t} y\big\rangle  \Big) \nu(\dd y) \dd s.
}
In the above, $\Phi^{\alpha}_{s,\cdot}:[s,T]\to \mathcal{M}^{d\times d}$ is the unique solution of 
\begin{equation}\label{eq:ode}
\left\{
\begin{aligned}
\frac{\dd}{\dd t}\Phi^{\alpha}_{s,t}  &=\a_t \Phi^{\alpha}_{s,t} , \quad s<t\leq T,\\
\Phi^{\alpha}_{s,s} &= I_d, 
\end{aligned}
\right. 
\end{equation}
which is an absolutely continuous function such that 
$
\Phi^{\alpha}_{s,t} = I_d + \int_{s}^{t} \alpha_u \Phi^{\alpha}_{s,u} \dd u
$, for $0\leq t\leq T$.
\end{lemma}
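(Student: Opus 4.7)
The plan is to derive an explicit variation-of-constants representation for $X^{(\alpha,\beta)}_t$ and then compute its characteristic function by exploiting the independence of $Y$ from $L$ together with the Lévy--Khintchine formula.

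\textbf{Step 1 (Variation of constants).} I would first note that, since $\alpha \in L^{\infty}([0,T]:\mathcal{M}^{d\times d})$, the matrix-valued function $t\mapsto \Phi^{\alpha}_{0,t}$ defined by \eqref{eq:ode} is absolutely continuous and invertible, with $(\Phi^{\alpha}_{0,t})^{-1}$ absolutely continuous and satisfying $\frac{\dd}{\dd t}(\Phi^{\alpha}_{0,t})^{-1} = -(\Phi^{\alpha}_{0,t})^{-1}\alpha_t$ a.e.\ on $[0,T]$. Setting $Z_t := (\Phi^{\alpha}_{0,t})^{-1} X^{(\alpha,\beta)}_t$, the integration-by-parts formula (valid since $(\Phi^{\alpha}_{0,\cdot})^{-1}$ is a deterministic continuous process of bounded variation, hence has no quadratic covariation with the semimartingale $X^{(\alpha,\beta)}$) yields
\begin{equation*}
Z_t = Y + \int_0^t (\Phi^{\alpha}_{0,s})^{-1}\beta_s\,\dd s + \int_0^t (\Phi^{\alpha}_{0,s})^{-1}\,\dd L_s,
\end{equation*}
after the drift term $\alpha_s X^{(\alpha,\beta)}_s \,\dd s$ cancels exactly against the contribution of $\dd(\Phi^{\alpha}_{0,s})^{-1}$. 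Multiplying by $\Phi^{\alpha}_{0,t}$ and using the flow identity $\Phi^{\alpha}_{0,t}(\Phi^{\alpha}_{0,s})^{-1}=\Phi^{\alpha}_{s,t}$ for $s\le t$, I obtain
\begin{equation*}
X^{(\alpha,\beta)}_t = \Phi^{\alpha}_{0,t}\, Y \;+\; \int_0^t \Phi^{\alpha}_{s,t}\,\beta_s\,\dd s \;+\; \int_0^t \Phi^{\alpha}_{s,t}\,\dd L_s.
\end{equation*}

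\textbf{Step 2 (Factorization of the characteristic function).} The three summands are independent ($Y$ is independent of $L$, and the second summand is deterministic), so
\begin{equation*}
\hat{\mu}_{X^{(\alpha,\beta)}_t}(\eta) = \Eb\big[\ee^{i\langle\eta,\Phi^{\alpha}_{0,t}Y\rangle}\big]\cdot\ee^{i\langle\eta,m^{\alpha,\beta}_t\rangle}\cdot\Eb\Big[\exp\Big(i\big\langle\eta,\textstyle\int_0^t \Phi^{\alpha}_{s,t}\,\dd L_s\big\rangle\Big)\Big].
\end{equation*}
The first factor equals $\hat{\mu}_Y((\Phi^{\alpha}_{0,t})^{\top}\eta)$ by the adjoint identity $\langle\eta,\Phi^{\alpha}_{0,t}Y\rangle=\langle (\Phi^{\alpha}_{0,t})^{\top}\eta,Y\rangle$, and the middle factor is immediate from the definition of $m^{\alpha,\beta}_t$ in \eqref{eq:ste10}.

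\textbf{Step 3 (Characteristic function of the stochastic integral).} This is the only delicate point. For a deterministic, bounded, matrix-valued integrand $\Phi^{\alpha}_{s,t}$ (in $s$, with $t$ fixed), I would approximate $s\mapsto \Phi^{\alpha}_{s,t}$ by simple functions on a partition of $[0,t]$, apply the independent-increments property of $L$ together with the Lévy--Khintchine formula to each elementary piece, and then pass to the limit using the bounded convergence theorem (the integrability assumption $\int(|y|^2\wedge 1)\,\nu(\dd y)<\infty$ and boundedness of $\Phi^{\alpha}_{\cdot,t}$ ensure uniform domination of the integrands). The limit yields
\begin{equation*}
\Eb\Big[\exp\Big(i\big\langle\eta,\textstyle\int_0^t \Phi^{\alpha}_{s,t}\,\dd L_s\big\rangle\Big)\Big] = \exp\!\bigg(\int_0^t \Psi_L\big((\Phi^{\alpha}_{s,t})^{\top}\eta\big)\,\dd s\bigg),
\end{equation*}
where $\Psi_L(\xi)=-\tfrac{1}{2}\langle\xi,\theta\xi\rangle+\int_{\R^d}\big(\ee^{i\langle\xi,y\rangle}-1-\mathbf 1_{\{|y|<1\}}i\langle\xi,y\rangle\big)\nu(\dd y)$ is the characteristic exponent of $L_1$. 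Substituting $\xi=(\Phi^{\alpha}_{s,t})^{\top}\eta$ and using $\langle(\Phi^{\alpha}_{s,t})^{\top}\eta,\theta(\Phi^{\alpha}_{s,t})^{\top}\eta\rangle=\langle\eta,\Phi^{\alpha}_{s,t}\s\s^{\top}(\Phi^{\alpha}_{s,t})^{\top}\eta\rangle$, the Gaussian part integrates to $-\tfrac12\langle\eta,C^{\alpha}_t\eta\rangle$ via \eqref{eq:ste10} and the jump part is exactly $n^{\alpha}_t(\eta)$ as defined in \eqref{eq:ste10bis}. Combining all three factors gives \eqref{eq:ste13}.

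The main obstacle I expect is the rigorous justification of Step 3: passing from the simple-function approximation to the general bounded measurable integrand requires care with the Lévy integral against the compensated Poisson measure. Everything else is either an ODE flow computation or a routine application of independence.
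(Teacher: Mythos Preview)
Your argument is correct, but it takes a genuinely different route from the paper's. After the same variation-of-constants step (the paper also sets $\tilde X_t=(\Phi^{\alpha}_{0,t})^{-1}X_t$), the paper does \emph{not} compute the characteristic function of $\int_0^t \Phi^{\alpha}_{s,t}\,\dd L_s$ via simple-function approximation. Instead it applies It\^o's formula for jump semimartingales directly to the complex exponential $Z_t=\exp\big(i\langle\xi,\tilde X_t-Y\rangle\big)$, takes expectations, uses the martingale property of the Brownian and compensated-jump integrals, and obtains a linear ODE $\phi_t'(\xi)=\phi_t(\xi)\psi_t(\xi)$ for $\phi_t(\xi)=\Eb[Z_t]$, whose explicit solution $\phi_t(\xi)=\exp\big(\int_0^t\psi_s(\xi)\,\dd s\big)$ is then combined with the factor $\hat\mu_Y(\Phi_{0,t}^\top\eta)$. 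What each approach buys: the paper's ODE method sidesteps entirely the limiting argument you flag in Step~3, at the cost of invoking It\^o's formula in the jump setting; your approach is more elementary in its tools (integration by parts with a bounded-variation integrator, independence, L\'evy--Khintchine on each piece) but places the technical burden precisely on the dominated-convergence passage you identify. Both are standard and lead to the same formula.
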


We also have the following estimates on the quantities arising in $\hat{\mu}_{X^{(\a,\b)}_t}$, which will be used repeatedly throughout the paper, and whose proofs are also postponed until Appendix \ref{app:proof_ito}.
\begin{lemma}\label{lemm:estimates_bis}
For any $T>0, 0<\delta< 1$ and $(\alpha,\beta)\in L^{\infty}([0,T]:\mathcal{M}^{d\times d}\times \R^d)$, we have 
\begin{align}
|\Phi^{\a}_{s,t}|& \leq \ee^{T \|\a \|_{T,0} },\label{eq:est_1}\\
| C^{\alpha}_t | &\leq t |\theta| \ee^{2 T \|\alpha\|_{T,0} },\label{eq:est_1_b}\\
| m^{\alpha,\beta}_t | &\leq t  \ee^{ T \|\alpha\|_{T,0} } \|\beta\|_{T,0},\label{eq:est_1_c}\\ \label{eq:estim_n}
|n^{\alpha}_t(\eta)| &\leq 2 t \ee^{ 2T \|\alpha\|_{T,0} }\bigg(  |\eta|^2  \int_{|y|< \delta} |y|^2 \nu(\dd y)  + (|\eta|+1) \int_{|y|\geq \delta}  \nu(\dd y) \bigg) ,  \\ \label{eq:estimate_quad_form}
|\langle \eta, C^{\alpha}_t\eta\rangle|& \geq \lambda_{\text{min}}(\theta) \ee^{-2T \| \alpha \|_{T,0}} t |\eta|^2,
\end{align}
for any $0\leq s \leq t\leq T$ and $\eta\in\R^d$.
In the above, $\lambda_{\text{min}}(\theta)$ denotes the minimum eigenvalue of $\theta$.
\end{lemma}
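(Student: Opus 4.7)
The plan is to treat the five estimates in order, with each subsequent bound reusing the previous ones. The only substantive work is in the first and last estimates; the middle three are essentially immediate consequences.

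First, for \eqref{eq:est_1}, I would apply Gronwall's lemma to the integral form $\Phi^{\alpha}_{s,t} = I_d + \int_{s}^{t}\alpha_u \Phi^{\alpha}_{s,u}\,\dd u$ stated at the end of Lemma \ref{lem:ito}. Taking the spectral norm under the integral and using $|\alpha_u|\le \|\alpha\|_{T,0}$ a.e.\ gives $|\Phi^{\alpha}_{s,t}| \le 1 + \|\alpha\|_{T,0}\int_s^t |\Phi^{\alpha}_{s,u}|\,\dd u$, and then Gronwall yields $|\Phi^{\alpha}_{s,t}|\le \ee^{(t-s)\|\alpha\|_{T,0}} \le \ee^{T\|\alpha\|_{T,0}}$. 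The estimates \eqref{eq:est_1_b} and \eqref{eq:est_1_c} then follow directly: plug the bound just obtained into the definitions \eqref{eq:ste10} of $C^{\alpha}_t$ and $m^{\alpha,\beta}_t$, use submultiplicativity of the spectral norm together with $|\sigma\sigma^\top| = |\theta|$, and integrate over $[0,t]$.

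For \eqref{eq:estim_n}, I would split the inner integral in \eqref{eq:ste10bis} at the threshold $|y|=\delta$. On $\{|y|<\delta\}\subseteq\{|y|<1\}$ the integrand is $\ee^{iz}-1-iz$ with $z=\langle\eta,\Phi^{\alpha}_{s,t}y\rangle$, so the classical pointwise bound $|\ee^{iz}-1-iz|\le \tfrac12 z^2$ together with $|z|\le|\eta||\Phi^{\alpha}_{s,t}||y|$ and \eqref{eq:est_1} gives the $|\eta|^2$ term. On $\{|y|\ge \delta\}$ I would bound the integrand crudely by $|\ee^{iz}-1|+\mathbf{1}_{|y|<1}|z|\le 2 + |\eta||\Phi^{\alpha}_{s,t}||y|\mathbf{1}_{\delta\le|y|<1}$, use $|y|\le 1$ on $\{\delta\le|y|<1\}$ to absorb the $|y|$ factor, and then combine everything into the constant $2$ in front. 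Integrating against $\nu(\dd y)\dd s$ and using \eqref{eq:est_1} again yields \eqref{eq:estim_n}.

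The lower bound \eqref{eq:estimate_quad_form} is the only step that requires a genuinely new ingredient, namely a bound on $(\Phi^{\alpha}_{s,t})^{-1}$. The plan is to write
\begin{equation*}
\langle \eta, C^{\alpha}_t\eta\rangle = \int_0^t \big\langle (\Phi^{\alpha}_{s,t})^\top \eta,\, \theta\, (\Phi^{\alpha}_{s,t})^\top \eta\big\rangle\, \dd s \;\ge\; \lambda_{\min}(\theta) \int_0^t \big|(\Phi^{\alpha}_{s,t})^\top \eta\big|^2\, \dd s,
\end{equation*}
and then to bound $|(\Phi^{\alpha}_{s,t})^\top \eta|$ from below by $|\eta|/|(\Phi^{\alpha}_{s,t})^{-1}|$. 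To control $(\Phi^{\alpha}_{s,t})^{-1}$ I would note that it satisfies the adjoint ODE $\frac{\dd}{\dd t}(\Phi^{\alpha}_{s,t})^{-1}=-(\Phi^{\alpha}_{s,t})^{-1}\alpha_t$ with initial condition $I_d$ at $t=s$ (obtained by differentiating the identity $(\Phi^{\alpha}_{s,t})^{-1}\Phi^{\alpha}_{s,t}=I_d$), and then apply Gronwall exactly as in step one to get $|(\Phi^{\alpha}_{s,t})^{-1}|\le \ee^{T\|\alpha\|_{T,0}}$. Combining the two displays yields \eqref{eq:estimate_quad_form}.

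The main (mild) obstacle is estimate \eqref{eq:estim_n}, where one must carefully match the two regimes of $|y|$ with the correct Taylor bound to avoid a spurious $|\eta|^2$ contribution from the truncation region $\{\delta\le|y|<1\}$; everything else is a routine Gronwall argument.
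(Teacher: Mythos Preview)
Your proposal is correct and follows essentially the same line as the paper's proof: Gr\"onwall for \eqref{eq:est_1}, direct substitution/Gr\"onwall for \eqref{eq:est_1_b}--\eqref{eq:est_1_c}, the Taylor split at $|y|=\delta$ for \eqref{eq:estim_n}, and the Gr\"onwall bound on $(\Phi^{\alpha}_{s,t})^{-1}$ for \eqref{eq:estimate_quad_form}. The only cosmetic difference is that for \eqref{eq:est_1_b} the paper derives the ODE $\frac{\dd}{\dd t}C_t = \theta + \alpha_t C_t + C_t\alpha_t^\top$ and applies Gr\"onwall again, whereas you (more directly) substitute the bound on $\Phi^{\alpha}_{s,t}$ into the integral definition of $C^{\alpha}_t$; both arrive at the same estimate.
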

\begin{lemma}\label{lemm:estimates_ter}
For any $T>0$ and $(\alpha,\beta)\in L^{\infty}([0,T]:\mathcal{M}^{d\times d}\times \R^d)$, we have that
\begin{align}\label{eq:est_1_ter}
|\Phi^{\a}_{s,t'} - \Phi^{\a}_{s,t}|& \leq (t'-t)\|\a \|_{T,0} \ee^{T \|\a \|_{T,0}} ,\\
| C^{\alpha}_{t'} - C^{\alpha}_{t} | &\leq (t'-t) |\theta| \ee^{4 T \|\a \|_{T,0}},   \label{eq:est_1_b_ter}\\
| m^{\alpha,\beta}_{t'} - m^{\alpha,\beta}_{t} | &\leq (t'-t) \|\b \|_{T,0} \ee^{2 T \|\a \|_{T,0}} ,\label{eq:est_1_c_ter}
\intertext{
(and in addition, if Assumption \ref{ass:levy} holds)
}
|n^{\alpha}_{t'}(\eta) - n^{\alpha}_{t}(\eta)| &\leq 2(t'-t) \ee^{3 T \| \a \|_{T,0}}  \big( |\eta|^2 + |\eta| + 1  \big)\int_{\R^d}|y|(1\wedge |y|) \nu(\dd y),\label{eq:estim_n_ter}
\end{align}
for any $0\leq s \leq t\leq t' \leq T$ and $\eta\in\R^d$.
\end{lemma}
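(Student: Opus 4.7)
The plan is to derive the four estimates essentially in order, since the Lipschitz-in-time bound on $\Phi^{\a}_{s,\cdot}$ feeds into the others. For \eqref{eq:est_1_ter}, I would use the integral form of \eqref{eq:ode} to write $\Phi^{\a}_{s,t'}-\Phi^{\a}_{s,t}=\int_t^{t'}\a_u\Phi^{\a}_{s,u}\,\dd u$ and estimate this directly by $(t'-t)\,\|\a\|_{T,0}$ times the uniform bound \eqref{eq:est_1}. This step is completely routine.

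For \eqref{eq:est_1_b_ter} and \eqref{eq:est_1_c_ter}, I would split each difference into a ``near'' piece on $[t,t']$ and a ``far'' piece on $[0,t]$; that is,
\eqstar{
m^{\a,\b}_{t'}-m^{\a,\b}_{t}=\int_t^{t'}\Phi^{\a}_{s,t'}\b_s\,\dd s+\int_0^{t}\bigl(\Phi^{\a}_{s,t'}-\Phi^{\a}_{s,t}\bigr)\b_s\,\dd s,
}
and analogously for $C^{\a}$, where the far piece additionally carries the symmetric ``transpose" contribution $\Phi^{\a}_{s,t}\theta(\Phi^{\a}_{s,t'}-\Phi^{\a}_{s,t})^\top$. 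The near piece is controlled by $(t'-t)$ and the uniform bound \eqref{eq:est_1}; the far piece uses the freshly proved \eqref{eq:est_1_ter} together with $t\le T$. Summing the two contributions produces a prefactor of the form $1+cT\|\a\|_{T,0}$ which I would absorb into $\ee^{cT\|\a\|_{T,0}}$ to match the stated exponents $\ee^{4T\|\a\|_{T,0}}$ and $\ee^{2T\|\a\|_{T,0}}$.

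The main obstacle is \eqref{eq:estim_n_ter}. The same near/far splitting applies, and in each piece the $y$-integral should be further decomposed into $|y|<1$ and $|y|\ge 1$. The near piece $\int_{t}^{t'}$ is immediately dominated by applying \eqref{eq:estim_n} pointwise in $s$ with $\delta=1$, producing a bound proportional to $(t'-t)\ee^{2T\|\a\|_{T,0}}(|\eta|^2+|\eta|+1)$ against the measures $\int_{|y|<1}|y|^2\nu(\dd y)$ and $\int_{|y|\ge 1}\nu(\dd y)$, both of which are controlled by $\int_{\R^d}|y|(1\wedge|y|)\nu(\dd y)$ under Assumption \ref{ass:levy}. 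The far piece $\int_{0}^{t}$ requires bounding the difference of characteristic-function integrands at two matrices $\Phi':=\Phi^{\a}_{s,t'}$ and $\Phi:=\Phi^{\a}_{s,t}$. For small jumps I would use the identity
\eqstar{
(\ee^{ia}-1-ia)-(\ee^{ib}-1-ib)=i(a-b)\int_0^1\bigl(\ee^{i(b+u(a-b))}-1\bigr)\,\dd u,
}
with $a=\langle\eta,\Phi' y\rangle$, $b=\langle\eta,\Phi y\rangle$, which yields the bound $|a-b|(|a|\vee|b|)\le|\eta|^2|\Phi'-\Phi||y|^2$; for large jumps the elementary estimate $|\ee^{ia}-\ee^{ib}|\le|a-b|$ gives a linear-in-$|y|$ bound. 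Inserting \eqref{eq:est_1_ter} for $|\Phi'-\Phi|$ and integrating against $\nu(\dd y)$ invokes exactly the two moment pieces that are jointly controlled by $\int_{\R^d}|y|(1\wedge|y|)\nu(\dd y)$. Adding the near and far contributions and absorbing the resulting polynomial prefactor $2+T\|\a\|_{T,0}$ into $2\ee^{T\|\a\|_{T,0}}$ converts $\ee^{2T\|\a\|_{T,0}}$ into the claimed $\ee^{3T\|\a\|_{T,0}}$. The only subtle bookkeeping is to ensure that the $(|\eta|^2+|\eta|+1)$ factor survives uniformly across the two regimes; this is precisely why the near-piece bound \eqref{eq:estim_n} is used with $\delta=1$ rather than a vanishing $\delta$.
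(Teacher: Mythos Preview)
Your proposal is correct and follows essentially the same route as the paper: the same near/far splitting for $C^{\a}$, $m^{\a,\b}$ and $n^{\a}$, and the same absorption of the $(1+cT\|\a\|_{T,0})$ prefactors into the exponentials. The only cosmetic difference is in the treatment of the far piece of $n^{\a}$: the paper packages the $y$-integral as a single function $f(x)=\int_{\R^d}\big(\ee^{i\langle x,y\rangle}-1-\mathbf{1}_{\{|y|<1\}}i\langle x,y\rangle\big)\nu(\dd y)$ and proves a Lipschitz bound $|f(x)-f(x')|\le 2(M+1)\bar n\,|x-x'|$ via the mean-value theorem, then applies it with $x=(\Phi^{\a}_{s,t'})^\top\eta$, $x'=(\Phi^{\a}_{s,t})^\top\eta$; you instead unpack this by hand, splitting into $|y|<1$ and $|y|\ge 1$ and using the explicit integral identity for $\ee^{ia}-1-ia$. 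These are the same argument at different levels of abstraction. (One tiny omission: your bound $|a-b|(|a|\vee|b|)\le|\eta|^2|\Phi'-\Phi||y|^2$ drops the factor $\max(|\Phi'|,|\Phi|)\le \ee^{T\|\a\|_{T,0}}$ coming from $|a|\vee|b|$, but this is exactly what gets absorbed in the final step.)
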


\section{Contraction property in the Fourier space}
\label{section:contraction property}
\subsection{The case $a,b\in L^1(\R^d)$}
\label{section:contraction L1}
In this section, we perform the analysis of the fixed-point equation \eqref{eq:ste15} under the simplified assumption that the coefficients $a,b$ are integrable and prove that the function $\Psi_t$ defined in equation \eqref{eq:ste15} is a contraction map. 
\begin{assumption}\label{assump:coeff}
The coefficients $a,b \in L^{\infty}(\R^d)\cap L^{1}(\R^d)$. 
\end{assumption}
\begin{assumption}\label{ellipticity}
The matrix $\theta= \sigma\sigma^\top$ is positive definite, that is, its smallest eigenvalue $\lambda_{\text{min}}(\theta)$ is positive. 
\end{assumption}

\begin{proposition}\label{prop_verification}[Fourier representation]
Under Assumptions \ref{assump:coeff} and \ref{ellipticity}, for any $T>0$ and $(\a,\b)\in L^{\infty}([0,T]:\mathcal{M}^{d\times d}\times \R^d),$ we have that
\begin{align}\label{eq:fourier_rep1}
{\Psi_{1,t}}(\alpha,\beta)&= \frac{1}{(2\pi)^d} \int_{\R^d} \hat{a}(\eta) \exp\bigg(\!{-\frac{1}{2}\langle \eta, C^{\alpha}_t\eta\rangle - i\,\langle  \eta, m^{\alpha,\beta}_t\rangle +   n^{\alpha}_t(-\eta)}\!\bigg) 
\hat{\mu}_Y \!\left(-(\Phi^{\alpha}_{0,t})^\top \eta\right)\dd \eta,\\ \label{eq:fourier_rep2}
{\Psi_{2,t}}(\alpha,\beta)&= \frac{1}{(2\pi)^d} \int_{\R^d} \hat{b}(\eta) \exp\bigg(\!{-\frac{1}{2}\langle \eta, C^{\alpha}_t\eta\rangle - i\,\langle  \eta, m^{\alpha,\beta}_t\rangle +   n^{\alpha}_t(-\eta)}\!\bigg) 
\hat{\mu}_Y \!\left(-(\Phi^{\alpha}_{0,t})^\top \eta\right)\dd \eta,
\end{align}
for any $t\in[0,T],$
where $C^{\alpha},m^{\alpha,\beta},n^{\alpha},\Phi^{\alpha}$ are as defined in \eqref{eq:ste10}-\eqref{eq:ste10bis}-\eqref{eq:ode}. 
\end{proposition}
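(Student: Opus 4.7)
The plan is to recognise that $\Psi_{1,t}(\alpha,\beta)=\Eb[a(X_t^{(\alpha,\beta)})]$ is an integral of $a$ against the law of $X_t^{(\alpha,\beta)}$, push it into the Fourier domain via Plancherel's theorem, and then substitute the closed-form characteristic function provided by Lemma \ref{lem:ito}. The derivation for $\Psi_{2,t}$ is identical after replacing $a$ with $b$.

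First I would show that, for every $t\in(0,T]$, the characteristic function $\hat{\mu}_{X_t^{(\alpha,\beta)}}$ belongs to $L^1(\Rb^d)\cap L^2(\Rb^d)$. Using \eqref{eq:ste13}, the modulus satisfies
\begin{equation}
\big|\hat{\mu}_{X_t^{(\alpha,\beta)}}(\eta)\big|\le \exp\!\Big(\!-\tfrac{1}{2}\langle\eta,C^{\alpha}_t\eta\rangle+\mathrm{Re}\,n^{\alpha}_t(\eta)\Big)\big|\hat{\mu}_Y((\Phi^\alpha_{0,t})^\top\eta)\big|.
\end{equation}
Since $\mathrm{Re}(\ee^{i\langle\eta,\Phi^\alpha_{s,t}y\rangle}-1)=\cos\langle\eta,\Phi^\alpha_{s,t}y\rangle-1\le 0$ while the truncated compensator in \eqref{eq:ste10bis} is purely imaginary, one has $\mathrm{Re}\,n^{\alpha}_t(\eta)\le 0$; also $|\hat{\mu}_Y|\le 1$. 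Invoking Assumption \ref{ellipticity} and \eqref{eq:estimate_quad_form} yields the Gaussian bound
\begin{equation}
\big|\hat{\mu}_{X_t^{(\alpha,\beta)}}(\eta)\big|\le \exp\!\Big(\!-\tfrac{1}{2}\lambda_{\min}(\theta)\,\ee^{-2T\|\alpha\|_{T,0}}\,t\,|\eta|^2\Big),
\end{equation}
whence the claimed integrability. By standard Fourier inversion, $\mu_{X_t^{(\alpha,\beta)}}$ therefore admits a density $p_t\in L^1(\Rb^d)\cap L^\infty(\Rb^d)$ with $\hat{p}_t=\hat{\mu}_{X_t^{(\alpha,\beta)}}$.

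Next, since $a\in L^1(\Rb^d)\cap L^\infty(\Rb^d)\subset L^1(\Rb^d)\cap L^2(\Rb^d)$ and the same is true of $p_t$, Plancherel's theorem applies and gives
\begin{equation}
\Psi_{1,t}(\alpha,\beta)=\int_{\Rb^d} a(x)\,p_t(x)\,\dd x=\frac{1}{(2\pi)^d}\int_{\Rb^d}\hat{a}(\eta)\,\hat{p}_t(-\eta)\,\dd\eta.
\end{equation}
Substituting $-\eta$ for $\eta$ in \eqref{eq:ste13} and using the symmetry $\langle-\eta,C^\alpha_t(-\eta)\rangle=\langle\eta,C^\alpha_t\eta\rangle$ reproduces exactly the integrand of \eqref{eq:fourier_rep1}. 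Replacing $a$ by $b$ throughout gives \eqref{eq:fourier_rep2}.

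The only non-routine point is the integrability estimate for $\hat{\mu}_{X_t^{(\alpha,\beta)}}$: although \eqref{eq:estim_n} shows $n^{\alpha}_t$ can grow quadratically in $|\eta|$, its real part is non-positive and hence cannot spoil the Gaussian decay supplied by the non-degenerate diffusion; once this is in place, the rest of the argument is a straightforward application of Plancherel and direct substitution.
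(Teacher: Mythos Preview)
Your proof is correct and follows the same route as the paper: show that $\hat{\mu}_{X_t^{(\alpha,\beta)}}\in L^1\cap L^2$ for $t>0$, deduce the existence of a density in $L^1\cap L^2$, apply Plancherel, and substitute the expression \eqref{eq:ste13} from Lemma~\ref{lem:ito}. The only difference lies in how the integrability of $\hat{\mu}_{X_t^{(\alpha,\beta)}}$ is obtained: the paper bounds $|n^\alpha_t(\eta)|$ via \eqref{eq:estim_n} with $\delta$ taken small enough that the resulting $|\eta|^2$-term is absorbed by the Gaussian factor coming from \eqref{eq:estimate_quad_form}, whereas you observe directly that $\mathrm{Re}\,n^\alpha_t(\eta)\le 0$ (the compensator term being purely imaginary and $\cos(\cdot)-1\le 0$), so that $n^\alpha_t$ can never offset the Gaussian decay—this is a slightly cleaner argument for the same conclusion.
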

\begin{proof}
By \eqref{eq:ste13}, together with \eqref{eq:estimate_quad_form} and \eqref{eq:estim_n} with $\delta$ suitably small, we have that $\hat{\mu}_{X^{(\a,\b)}_t} \in L^p(\R^d)$ for any $p\in\mathbb{N}$. In particular $\hat{\mu}_{X^{(\a,\b)}_t} \in L^1(\R^d)$, and thus, ${\mu}_{X^{(\a,\b)}_t}$ has a density $f_{X^{(\a,\b)}_t}$ 
 that is given by 
\begin{equation}\label{eq:density}
f_{X^{(\a,\b)}_t} (x) = \frac{1}{(2\pi)^d} \int_{\R^d}  \ee^{-i \langle x,\eta \rangle}  \hat{\mu}_{X^{(\a,\b)}_t}(\eta)  \dd \eta, \quad x\in\R^d. 
\end{equation}
This yields
\begin{equation}\label{eq:density_rep}
{\Psi_{1,t}}(\alpha,\beta) = \int_{\R^d} a(x) {\mu}_{X^{(\a,\b)}_t} (\dd x) = \int_{\R^d} a(x) f_{X^{(\a,\b)}_t} (x) \dd x .
\end{equation}
Moreover, since $f_{X^{(\a,\b)}_t}(x)  = \hat{\hat{\mu}}_{X^{(\a,\b)}_t}(-x) $ and $\hat{\mu}_{X^{(\a,\b)}_t} \in L^1(\R^d)\cap L^2(\R^d)$, we also have that $f_{X^{(\a,\b)}_t} (x) $ $\in L^1(\R^d)\cap L^2(\R^d) $. Also, by Assumption \ref{assump:coeff}, $a\in L^{1}(\R^d)\cap L^{2}(\R^d)$. Therefore, by employing Plancherel's theorem in \eqref{eq:density_rep}, we get that
\begin{equation}
{\Psi_{1,t}}(\alpha,\beta) = \frac{1}{(2\pi)^d} \int_{\R^d} \hat{a}(\eta) \hat{\mu}_{X^{(\a,\b)}_t} (-\eta) \dd \eta.
\end{equation}
Finally, applying the result in Lemma \ref{lem:ito} concludes the proof of \eqref{eq:fourier_rep1}. The proof of \eqref{eq:fourier_rep2} is identical.
\end{proof}
\begin{remark}\label{rem:continuity_Psi}
Under Assumption \ref{assump:coeff} and \ref{ellipticity}, the function $t\to \Psi_t(\alpha,\beta)$ is continuous on $]0,T]$ for any $(\alpha,\beta)\in L^{\infty}([0,T]:\mathcal{M}^{d\times d}\times \R^d)$. This can be easily verified by using Proposition \ref{prop_verification} together with Lebesgue dominated convergence theorem.
\end{remark}
\begin{assumption}
\label{ass:levy}
The L\'evy measure $\nu$ is such that
\begin{equation}
\label{eq:ste52}
\bar{n}:= \int_{\R^d}|y|(1\wedge |y|) \nu(\dd y) <\infty,
\end{equation}
and initial datum $Y$ is such that
\begin{equation}
\Eb[|Y|] < \infty.
\end{equation}
\end{assumption}
\begin{assumption}
\label{assump:coeff_B}
The Fourier transforms 
$\hat{a},\hat{b}\in L^1 (\R^d)$.
\end{assumption}
\begin{theorem}\label{lem:contraction}[Contraction property]
Suppose that  Assumption \ref{assump:coeff}, \ref{ellipticity}, \ref{ass:levy} and \ref{assump:coeff_B} hold. Then, for any $T,c>0$, there exists $\lambda>0$, only dependent on $c$, $T$, $\theta$, $\| a \|_{L^{\infty}}$, $\| b \|_{L^{\infty}}$, $\frac{1}{(2\pi)^d}\| \hat{a} \|_{L^1}$, $\frac{1}{(2\pi)^d}\| \hat{b} \|_{L^1}$, $
\nu(\dd y)$ and $\Eb[|Y|]$, such that
\begin{align}\label{eq:contraction}
\|\Psi(\alpha,\beta) - \Psi(\alpha',\beta')\|_{T,\lambda}& \leq c    \|(\alpha -\alpha',\beta -\beta' )\|_{T,\lambda} ,
\end{align}
for any $(\alpha,\beta),(\alpha',\beta')\in L^{\infty}([0,T]:\mathcal{M}^{d\times d}\times \R^d)$ with $\| (\alpha,\beta)\|_{T,0} ,\| (\alpha',\beta')\|_{T,0} \leq   \| a \|_{L^{\infty}}+ \| b \|_{L^{\infty}}$.
\end{theorem}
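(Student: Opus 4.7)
The plan is to work from the Fourier representation in Proposition \ref{prop_verification}. Setting $\gamma=(\alpha,\beta)$, $\gamma'=(\alpha',\beta')$, and using the shorthand
\[
E^\gamma_t(\eta):=\exp\!\bigl(-\tfrac12\langle\eta,C^\alpha_t\eta\rangle-i\langle\eta,m^{\alpha,\beta}_t\rangle+n^\alpha_t(-\eta)\bigr),\quad F^\alpha_t(\eta):=\hat\mu_Y\bigl(-(\Phi^\alpha_{0,t})^\top\eta\bigr),
\]
I would split the difference of the integrands as
\[
E^\gamma_tF^\alpha_t-E^{\gamma'}_tF^{\alpha'}_t=(E^\gamma_t-E^{\gamma'}_t)F^\alpha_t+E^{\gamma'}_t(F^\alpha_t-F^{\alpha'}_t),
\]
reducing the problem to estimating exponent- and characteristic-function-differences against the integrable weights $\hat a$ and $\hat b$.

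The first main step is to prove Lipschitz-in-$(\alpha,\beta)$ analogues of Lemma \ref{lemm:estimates_ter}. A Gronwall argument applied to the integral form of \eqref{eq:ode} gives $|\Phi^\alpha_{s,t}-\Phi^{\alpha'}_{s,t}|\le C\int_s^t|\alpha_u-\alpha'_u|du$, with constants depending only on the allowed bound $\|a\|_{L^\infty}+\|b\|_{L^\infty}$ for $\|\alpha\|_{T,0},\|\alpha'\|_{T,0}$. Plugging into \eqref{eq:ste10}--\eqref{eq:ste10bis} and reproducing the computations of Lemma \ref{lemm:estimates_ter} (splitting the jump integral into $|y|<\delta$ and $|y|\ge\delta$ pieces, as in \eqref{eq:estim_n_ter}), I expect
\[
|C^\alpha_t-C^{\alpha'}_t|\le Ct\!\int_0^t|\alpha_s-\alpha'_s|ds,\quad|m^{\alpha,\beta}_t-m^{\alpha',\beta'}_t|\le Ct\!\int_0^t|\alpha_s-\alpha'_s|ds+C\!\int_0^t|\beta_s-\beta'_s|ds,
\]
\[
|n^\alpha_t(-\eta)-n^{\alpha'}_t(-\eta)|\le C(|\eta|+|\eta|^2)\,t\!\int_0^t|\alpha_s-\alpha'_s|ds.
\]
For the $F$-difference, the bound $|\hat\mu_Y(\xi)-\hat\mu_Y(\xi')|\le\Eb[|Y|]|\xi-\xi'|$, which is where $\Eb[|Y|]<\infty$ of Assumption \ref{ass:levy} enters, yields $|F^\alpha_t-F^{\alpha'}_t|\le C\Eb[|Y|]|\eta|\int_0^t|\alpha_s-\alpha'_s|ds$.

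Next I would apply the complex mean-value inequality $|\ee^z-\ee^w|\le\ee^{\max(\mathrm{Re}\,z,\mathrm{Re}\,w)}|z-w|$, noting that $\mathrm{Re}\,n^\alpha_t(-\eta)\le0$ and invoking \eqref{eq:estimate_quad_form} to get $|E^\gamma_t|,|E^{\gamma'}_t|\le\ee^{-c_0 t|\eta|^2}$, with $c_0:=\tfrac12\lambda_{\min}(\theta)\ee^{-2T(\|a\|_{L^\infty}+\|b\|_{L^\infty})}$; of course $|F^\alpha_t|\le1$. Combining with the Step-1 bounds on the exponent differences, the integrand is a finite sum of terms of the form
\[
P_k(|\eta|)\,\ee^{-c_0 t|\eta|^2}\,t^r\!\int_0^t\!\bigl(|\alpha_s-\alpha'_s|+|\beta_s-\beta'_s|\bigr)ds,\qquad k\in\{1,2\},\ r\in\{0,1\}.
\]
Integrating against $|\hat a(\eta)|$ and using $\sup_\eta|\eta|^k\ee^{-c_0 t|\eta|^2}\le C_k t^{-k/2}$ together with $\hat a\in L^1$ yields
\[
|\Psi_{1,t}(\gamma)-\Psi_{1,t}(\gamma')|\le C\|\hat a\|_{L^1}\sum_{k,r}t^{r-k/2}\!\int_0^t\!\bigl(|\alpha_s-\alpha'_s|+|\beta_s-\beta'_s|\bigr)ds,
\]
and the analogue with $\|\hat b\|_{L^1}$ for $\Psi_{2,t}$.

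Lastly, using $\int_0^t|\alpha_s-\alpha'_s|ds\le\frac{\ee^{\lambda t}-1}{\lambda}\|\alpha-\alpha'\|_{T,\lambda}$ (and similarly for $\beta$), multiplying by $\ee^{-\lambda t}$, and taking essential sup over $t\in[0,T]$, the contraction reduces to controlling
\[
\sup_{t\in[0,T]}t^{r-k/2}\,\frac{1-\ee^{-\lambda t}}{\lambda}.
\]
The critical case is $(k,r)=(1,0)$, arising from the $\beta$-part of $|m^{\alpha,\beta}_t-m^{\alpha',\beta'}_t|$ and from the $\hat\mu_Y$-difference; the substitution $u=\lambda t$ gives the explicit rate $\lambda^{-1/2}\sup_{u>0}\frac{1-\ee^{-u}}{\sqrt u}=O(\lambda^{-1/2})$, while every other $(k,r)$ combination decays at least as $O(\lambda^{-1})$. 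Taking $\lambda$ sufficiently large in terms only of the quantities listed in the theorem yields the desired contraction constant $c$. The main technical obstacle is precisely this final bookkeeping: the $t^{-k/2}$ singularity at $t=0$ produced by the Gaussian envelope, unavoidable because the $|\eta|^k$ prefactors must be integrated against merely $L^1$ data, has to be absorbed jointly by the $t^r$ factor from the exponent-difference estimates and by the weighted-norm smoothing $(1-\ee^{-\lambda t})/\lambda$, uniformly in $t\in[0,T]$.
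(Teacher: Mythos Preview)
Your proposal is correct and follows essentially the same route as the paper: Fourier representation, Lipschitz-in-$(\alpha,\beta)$ estimates for $\Phi^\alpha$, $C^\alpha$, $m^{\alpha,\beta}$, $n^\alpha$ (these are exactly the content of Lemma \ref{lem:estimates}), the Gaussian envelope from \eqref{eq:estimate_quad_form}, the $\sup_\eta|\eta|^k\ee^{-c_0t|\eta|^2}\le C t^{-k/2}$ trick combined with $\hat a,\hat b\in L^1$, and the final $\lambda$-weighted norm computation yielding the critical rate $\lambda^{-1/2}$.

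Two minor stylistic differences are worth recording. First, the paper uses a five-term telescopic splitting $I_1,\dots,I_5$ (changing one factor at a time among $C$, $m$ in $\alpha$, $m$ in $\beta$, $n$, and $\hat\mu_Y$), whereas you lump the three exponent differences into a single complex mean-value inequality and keep the $\hat\mu_Y$-difference separate; both lead to the same collection of $(k,r)$ terms. Second, you invoke $\mathrm{Re}\,n^\alpha_t(-\eta)\le 0$ (which is immediate from $\cos-1\le 0$) to bound $|E^\gamma_t|\le\ee^{-c_0t|\eta|^2}$ directly, while the paper instead estimates $|\ee^{n^\alpha_t(-\eta)}|$ via \eqref{eq:estim_n} with a small $\delta$ and then absorbs the resulting $\ee^{2\eps|\eta|^2}$ into the Gaussian; your observation is a genuine simplification of that step.
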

In the one-dimensional case, Assumption \ref{assump:coeff_B} is not necessary. 
\begin{theorem}\label{lem:contraction_d1}
Suppose that  Assumption \ref{assump:coeff}, \ref{ellipticity} and \ref{ass:levy} hold, and $d=1$. Then, the same conclusions of Theorem \ref{lem:contraction} hold, with $\lambda$ that depends on $\| \hat{a} \|_{L^2}$, $\| \hat{b} \|_{L^2}$ instead of $\| \hat{a} \|_{L^1}$, $\| \hat{b} \|_{L^1}$.
\end{theorem}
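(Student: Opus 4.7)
The approach is to mimic the proof of Theorem \ref{lem:contraction}, but to replace the H\"older pairing $\|\hat a\|_{L^1}\!\cdot\!\|\hat{\mu}\|_{L^\infty}$ in the Fourier integral by a Cauchy--Schwarz pairing $\|\hat a\|_{L^2}\!\cdot\!\|\hat{\mu}\|_{L^2}$. This is viable in dimension one because the Gaussian factor in the characteristic function \eqref{eq:ste13}, which is square-integrable over $\R$ for every $t>0$ thanks to Assumption \ref{ellipticity}, places $\hat{\mu}_{X^{(\alpha,\beta)}_t}$ in $L^2(\R)$. Moreover, Assumption \ref{assump:coeff} and Plancherel automatically give $\hat a,\hat b\in L^2(\R)$, since $a,b\in L^1\cap L^\infty\subset L^2$.

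Starting from Proposition \ref{prop_verification}, Cauchy--Schwarz yields
\begin{align*}
|\Psi_{1,t}(\alpha,\beta) - \Psi_{1,t}(\alpha',\beta')| \leq \frac{1}{2\pi}\|\hat a\|_{L^2}\bigl\|\hat{\mu}_{X^{(\alpha,\beta)}_t}(-\cdot) - \hat{\mu}_{X^{(\alpha',\beta')}_t}(-\cdot)\bigr\|_{L^2(\R)},
\end{align*}
and analogously for $\Psi_{2,t}$ with $\hat b$ in place of $\hat a$. The bulk of the proof is then an $L^2$ estimate for the difference of the characteristic functions. I would use \eqref{eq:ste13} to write $\hat{\mu}_{X^{(\alpha,\beta)}_t}(-\eta)=\exp\bigl(G^{(\alpha,\beta)}_t(\eta)\bigr)\,\hat\mu_Y(-\Phi^{\alpha}_{0,t}\eta)$, split the difference into an exponential part and a $\hat\mu_Y$ part, and apply the elementary inequality $|\ee^{A}-\ee^{B}|\leq(|\ee^{A}|\vee|\ee^{B}|)\,|A-B|$ (valid when $\Re A,\Re B\leq 0$) to the former. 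Since $\Re n^\alpha_t(-\eta)\leq 0$ (the integrand's real part is $\cos(\cdot)-1\leq 0$) and \eqref{eq:estimate_quad_form} gives the quadratic lower bound for the Gaussian part, one obtains the key square-integrable envelope $|\ee^{G^{(\alpha,\beta)}_t(\eta)}|\leq \exp\bigl(-\tfrac{1}{2}\lambda_{\text{min}}(\theta)\ee^{-2T\|\alpha\|_{T,0}}t\eta^{2}\bigr)$. The exponent difference $|G^{(\alpha,\beta)}_t(\eta)-G^{(\alpha',\beta')}_t(\eta)|$ grows at most polynomially in $\eta$ and is Lipschitz in $(\alpha,\beta)$ with constants depending on $\int_0^t|\alpha_s-\alpha'_s|\,\dd s$ and $\int_0^t|\beta_s-\beta'_s|\,\dd s$, obtained by arguments analogous to those yielding Lemma \ref{lemm:estimates_ter}. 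For the $\hat\mu_Y$ piece I would use $|\hat\mu_Y(-\Phi^{\alpha}_{0,t}\eta)-\hat\mu_Y(-\Phi^{\alpha'}_{0,t}\eta)|\leq \Eb[|Y|]\,|\eta|\,|\Phi^{\alpha}_{0,t}-\Phi^{\alpha'}_{0,t}|$ from Assumption \ref{ass:levy} together with \eqref{eq:est_1_ter}.

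Squaring, combining and integrating against the Gaussian factor $\ee^{-ct\eta^{2}}$ (which absorbs any polynomial in $\eta$ at the price of negative powers of $t$) yields a bound of the form
\begin{align*}
\bigl\|\hat{\mu}_{X^{(\alpha,\beta)}_t}(-\cdot) - \hat{\mu}_{X^{(\alpha',\beta')}_t}(-\cdot)\bigr\|_{L^2(\R)}\leq K(T,\theta,\bar n,\Eb[|Y|])\int_0^t\bigl|(\alpha_s,\beta_s)-(\alpha'_s,\beta'_s)\bigr|\dd s,
\end{align*}
where the potential $t$-singularity at the origin is neutralised by the fact that the differences $|C^\alpha_t-C^{\alpha'}_t|$, $|m^{\alpha,\beta}_t-m^{\alpha',\beta'}_t|$ and $|n^{\alpha}_t(\cdot)-n^{\alpha'}_t(\cdot)|$ themselves vanish linearly in $t$ as $t\downarrow 0$. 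Multiplying by $\ee^{-\lambda t}$ and using the standard weighted-norm trick $\int_0^t \ee^{\lambda(s-t)}\dd s\leq 1/\lambda$ gives \eqref{eq:contraction} for $\lambda$ large enough, with the final $\lambda$ now depending on $\|\hat a\|_{L^2}$, $\|\hat b\|_{L^2}$ in place of $\|\hat a\|_{L^1}$, $\|\hat b\|_{L^1}$.

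The main obstacle I foresee is the Lipschitz-in-$\alpha$ estimate for the L\'evy exponent $n^\alpha_t(-\eta)$: one has to take finite differences of the integrand $\ee^{-i\eta\Phi^\alpha_{s,t}y}-1+\ind_{\{|y|<1\}}\,i\eta\Phi^\alpha_{s,t}y$ in $\Phi^\alpha$ in such a way that the small-$|y|$ cancellation inherent in the compensated integral is preserved, producing a bound involving $\bar n=\int|y|(1\wedge|y|)\nu(\dd y)<\infty$ and polynomial growth in $\eta$ that stays within the absorption range of the $\eta^{2}$ Gaussian weight in the subsequent $L^2$ integration.
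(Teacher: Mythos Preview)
Your approach is valid but differs from the paper's. The paper does not apply Cauchy--Schwarz at the level of $\Psi_{1,t}$ and then estimate $\|\hat\mu_{X^{(\alpha,\beta)}_t}-\hat\mu_{X^{(\alpha',\beta')}_t}\|_{L^2}$ from scratch. Instead it reuses Step~1 of the proof of Theorem~\ref{lem:contraction} verbatim to reach the intermediate bound~\eqref{eq:estimate_psi_proof}, which still carries $|\hat a(\eta)|,|\hat b(\eta)|$ inside an integral against the Gaussian weight $\ee^{-t|\eta|^2/(4\kappa)}$, and only \emph{then} applies Cauchy--Schwarz to that one-dimensional integral, bounding $\int_\R|\hat a(\eta)|\,\ee^{-t\eta^2/(4\kappa)}\dd\eta$ by $\|\hat a\|_{L^2}$ times a Gaussian $L^2$-norm. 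Combined with the $t^{-1/2}$ already in~\eqref{eq:estimate_psi_proof} this yields a singularity $t^{-p}$ with $p<1$, which the weighted-norm device absorbs. The paper's route is more economical because the entire five-term splitting and the estimates of Lemma~\ref{lem:estimates} are reused unchanged; your route re-derives an $L^2$ version of the same estimates. On the other hand, your observation that $\Re\, n^\alpha_t\leq 0$ delivers the Gaussian envelope directly is cleaner than the paper's~\eqref{eq:estim_e_n}.

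There is, however, a genuine gap in your sketch: the claim that the $t$-singularity is ``neutralised'' is wrong. Two of the increments in Lemma~\ref{lem:estimates} carry \emph{no} extra factor of $t$ beyond the $\int_0^t|\cdot|\dd s$: the $\beta$-variation of $m$ (estimate~\eqref{eq:ste104ter}) gives only $\kappa|\eta|\int_0^t|\beta_u-\beta'_u|\dd u$, and the $\hat\mu_Y$-piece, controlled via~\eqref{eq:ste104_quin}, gives only $\kappa|\eta|\int_0^t|\alpha_u-\alpha'_u|\dd u$. Squaring and integrating the resulting $|\eta|^{2}\ee^{-ct\eta^{2}}$ over $\R$ produces $t^{-3/2}$, so your $L^2$-bound is of order $t^{-3/4}\int_0^t|\gamma_s-\gamma'_s|\dd s$, not $O(\int_0^t)$. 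This does not break the argument---$(1-\ee^{-\lambda t})/(\lambda t^{3/4})\leq C\lambda^{-1/4}$ uniformly in $t>0$, so the contraction still follows for large $\lambda$---but you must track the singularity rather than assert it cancels. Finally, the Lipschitz-in-$\alpha$ estimate for $n^\alpha_t$ that you flag as the ``main obstacle'' is precisely~\eqref{eq:ste104quat} of Lemma~\ref{lem:estimates} (Lemma~\ref{lemm:estimates_ter} is Lipschitz in $t$, not in $\alpha$), and is already available.
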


\begin{corollary}\label{cor:fixed_point}
Under the assumptions of Theorem \ref{lem:contraction} (or Theorem \ref{lem:contraction_d1}), for any $T>0,$ we have that $\Psi$ is a contraction map from the set
\begin{equation}\label{eq:set_B}
B = \big\{ (\alpha,\beta)\in L^{\infty}([0,T]:\mathcal{M}^{d\times d}\times \R^d) :  \|(\alpha,\beta)\|_{T,0}\leq \| a \|_{L^{\infty}}+ \| b \|_{L^{\infty}}  \big\}
\end{equation}
onto itself, with respect to the norm $\|\cdot \|_{T,\lambda}$, with $\lambda$ as in Theorem \ref{lem:contraction} (or Theorem \ref{lem:contraction_d1}). In particular, there exists a unique solution $(\bar\alpha, \bar\beta)$ in $L^{\infty}([0,T]:\mathcal{M}^{d\times d}\times \R^d)$ to the MK-V fixed-point equation \eqref{eq:ste15}, and it is continuous on $]0,T]$. 
\end{corollary}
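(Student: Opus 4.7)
The plan is to derive the corollary as a routine application of the Banach fixed-point theorem, with Theorem \ref{lem:contraction} (or \ref{lem:contraction_d1}) providing the contraction estimate and a standard completeness/closedness argument providing the rest.

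First, I would verify that $\Psi$ sends $B$ into itself. By the very definition \eqref{eq:ste15}, for any $(\alpha,\beta) \in L^{\infty}([0,T]:\mathcal{M}^{d\times d}\times \R^d)$ and any $t\in[0,T]$,
\begin{equation*}
|\Psi_{1,t}(\alpha,\beta)| = \bigl|\Eb[a(X^{(\alpha,\beta)}_t)]\bigr| \leq \|a\|_{L^{\infty}}, \qquad |\Psi_{2,t}(\alpha,\beta)| \leq \|b\|_{L^{\infty}},
\end{equation*}
so $\|\Psi(\alpha,\beta)\|_{T,0} \leq \|a\|_{L^{\infty}} + \|b\|_{L^{\infty}}$, i.e.\ $\Psi(B) \subseteq B$. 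Next, fixing any $c\in (0,1)$ (say $c=1/2$), Theorem \ref{lem:contraction} (or \ref{lem:contraction_d1}) provides $\lambda>0$ such that
\begin{equation*}
\|\Psi(\gamma)-\Psi(\gamma')\|_{T,\lambda} \leq c\,\|\gamma-\gamma'\|_{T,\lambda}
\end{equation*}
for all $\gamma,\gamma'\in B$, since every element of $B$ satisfies the hypothesis $\|\gamma\|_{T,0}\leq \|a\|_{L^{\infty}}+\|b\|_{L^{\infty}}$ of that theorem.

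To invoke the Banach fixed-point theorem I then need $(B,\|\cdot\|_{T,\lambda})$ to be a complete metric space. The two norms $\|\cdot\|_{T,\lambda}$ and $\|\cdot\|_{T,0}$ are equivalent on $L^{\infty}([0,T]:\mathcal{M}^{d\times d}\times \R^d)$, with $\ee^{-\lambda T}\|\gamma\|_{T,0}\leq \|\gamma\|_{T,\lambda}\leq \|\gamma\|_{T,0}$; since $L^{\infty}([0,T]:\mathcal{M}^{d\times d}\times \R^d)$ is Banach under $\|\cdot\|_{T,0}$, it is also Banach under $\|\cdot\|_{T,\lambda}$, and $B$ (being a closed ball) is a closed, hence complete, subset. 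The Banach fixed-point theorem then yields a unique fixed point $(\bar\alpha,\bar\beta)\in B$ of $\Psi$.

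For uniqueness in the whole space $L^{\infty}([0,T]:\mathcal{M}^{d\times d}\times \R^d)$, I would observe that if $(\alpha,\beta)$ is any fixed point of $\Psi$, then $\alpha_t = \Eb[a(X^{(\alpha,\beta)}_t)]$ and $\beta_t=\Eb[b(X^{(\alpha,\beta)}_t)]$ imply $|\alpha_t|\leq \|a\|_{L^{\infty}}$ and $|\beta_t|\leq \|b\|_{L^{\infty}}$ for a.e.\ $t$, so automatically $(\alpha,\beta)\in B$, and hence coincides with $(\bar\alpha,\bar\beta)$. Finally, the continuity of $t\mapsto (\bar\alpha_t,\bar\beta_t)$ on $]0,T]$ is immediate from the fixed-point identity $(\bar\alpha,\bar\beta)=\Psi(\bar\alpha,\bar\beta)$ combined with Remark \ref{rem:continuity_Psi}. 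There is no genuine obstacle here; the only subtle point worth spelling out is the passage from uniqueness inside $B$ to uniqueness in the whole ambient space, which is handled by the a posteriori bound just described.
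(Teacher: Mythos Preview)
Your proof is correct and follows the same approach as the paper's (very brief) argument: show $\Psi(B)\subset B$, invoke the contraction estimate from Theorem~\ref{lem:contraction} (or~\ref{lem:contraction_d1}), apply Banach's fixed-point theorem, and use Remark~\ref{rem:continuity_Psi} for continuity. The only nuance the paper makes explicit that you leave implicit is that the bound $|\Eb[a(X^{(\alpha,\beta)}_t)]|\leq \|a\|_{L^\infty}$ (with $a$ only an $L^\infty$ equivalence class) uses that $X^{(\alpha,\beta)}_t$ has a density for $t\in]0,T]$; your added details on completeness of $(B,\|\cdot\|_{T,\lambda})$ and on uniqueness in the full ambient space are correct elaborations of what the paper leaves unsaid.
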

\begin{proof}
The fact that $a,b\in L^{\infty}(\R^d)$, and that ${X^{(\a,\b)}_t}$ has a density for any $t\in]0,T]$ and $(\alpha,\beta)\in L^{\infty}([0,T]:\mathcal{M}^{d\times d}\times \R^d)$, imply that $\Psi(\alpha,\beta) \in B$. Therefore, as a result of Theorem \ref{lem:contraction}, $\Psi$ is a contraction from $B$ onto itself, and \eqref{eq:ste15} has a unique solution in $L^{\infty}([0,T]:\mathcal{M}^{d\times d}\times \R^d)$, which belongs to $B$. 
The continuity of the solution is immediate from Remark \ref{rem:continuity_Psi}. 
\end{proof}
\begin{remark}
It is worth noticing that, under the assumptions of Theorem \ref{lem:contraction}, it is not excluded that the MK-V fixed-point equation \eqref{eq:ste15} has a solution that does not belong to $L^{\infty}([0,T])$. This is due to the fact that ${X^{(\a,\b)}_t}$ might have no density if $(\a,\b)\notin L^{\infty}([0,T]:\mathcal{M}^{d\times d}\times \R^d)$, and thus, it is not possible to establish a priori that  $\Psi(\alpha,\beta)$ belongs to $B$.
However, by reinforcing Assumption \ref{assump:coeff} which requires $a,b$ to be bounded instead of simply in $L^{\infty}(\R^d)$, the continuous solution $(\aX,\bX)$ in Corollary \ref{cor:fixed_point} is the only possible solution to \eqref{eq:ste15}. In fact, the boundedness of $a,b$ implies that $\Psi(\alpha,\beta) \in B$ for any choice of $(\alpha,\beta):[0,T] \to \mathcal{M}^{d\times d} \times \mathbb{R}^{d} $ such that \eqref{eq:mckean_SDE_particular} has a solution. Thus, a solution to \eqref{eq:ste15} necessarily belongs to $B$ and (by Remark \ref{rem:continuity_Psi}) is continuous on $]0,T]$. 
\end{remark}
The proof of Theorem \ref{lem:contraction} and Theorem \ref{lem:contraction_d1} is preceded by the following a priori estimates, whose proofs are postponed to Appendix \ref{app:estimates_apriori}.
\begin{lemma}
\label{lem:estimates}
Suppose that Assumption \ref{assump:coeff} and \ref{ass:levy} hold. 
For any $T>0$, and for any $(\alpha,\beta),(\alpha',\beta')\in L^{\infty}([0,T]:\mathcal{M}^{d\times d}\times \R^d)$ with $\|\alpha\|_{T,0},\|\alpha'\|_{T,0}\leq \| a \|_{L^{\infty}(\R^d)}$ and $\|\beta\|_{T,0} \leq \| b \|_{L^{\infty}(\R^d)}$, it holds that
\eqlnostar{eq:ste104}{
\big|\big\langle \eta, C^{\alpha}_t\eta\big\rangle - \big\langle \eta, C^{\alpha'}_t\eta\big\rangle  \big|   &\leq   \kappa\, t |\eta|^2 \int_0^t | \alpha_u - \alpha'_u | \dd u,\\ 
\label{eq:ste104bis}
\big| \big\langle  \eta, m^{\alpha,\beta}_t\big\rangle - \big\langle  \eta, m^{\alpha',\beta}_t\big\rangle  \big| & \leq   \kappa\, t |\eta| \int_0^t | \alpha_u - \alpha'_u | \dd u,\\ 
\label{eq:ste104ter}
\big| \big\langle  \eta, m^{\alpha,\beta}_t\big\rangle - \big\langle  \eta, m^{\alpha,\beta'}_t\big\rangle  \big| &\leq \kappa\, |\eta| \int_0^t | \beta_u - \beta'_u|   \dd u,\\
\label{eq:ste104_quin}
\big|\big(  (\Phi^{\alpha}_{s,t})^\top   -   (\Phi_{s,t}^{\alpha'})^\top  \big)\eta  \big|   &\leq   \kappa\, |\eta| \int_0^t | \alpha_u - \alpha'_u | \dd u,\\
\label{eq:ste104quat}
\big| n^{\alpha}_t(\eta) - n^{\alpha'}_t(\eta)  \big| &\leq   \kappa\, t |\eta|(1+|\eta|) \int_0^t | \alpha_u - \alpha'_u | \dd u,
}
for any $0\leq s\leq t\leq T, \eta\in\R^d$, where $\kappa$ is a positive constant that depends only on $T,\theta,\| a \|_{L^{\infty}}$, $\| b \|_{L^{\infty}}$, and $\nu(\dd y)$.
\end{lemma}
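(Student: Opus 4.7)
The plan is to first establish \eqref{eq:ste104_quin} via Gr\"onwall's lemma, and to use this linear-in-$\alpha-\alpha'$ bound on $\Phi^{\alpha}-\Phi^{\alpha'}$ as the driver for the remaining four estimates. Among those, \eqref{eq:ste104ter} (depending only on $\beta-\beta'$) is independent and essentially immediate from \eqref{eq:est_1}; the estimates \eqref{eq:ste104} and \eqref{eq:ste104bis} are straightforward substitutions into the formulas of \eqref{eq:ste10}; and \eqref{eq:ste104quat} will be the only delicate step, because the L\'evy measure must be handled carefully to use only the single moment appearing in Assumption \ref{ass:levy}. Throughout I would let $\kappa$ denote a positive constant, possibly changing from line to line, depending only on $T$, $\theta$, $\|a\|_{L^\infty}$, $\|b\|_{L^\infty}$ and $\bar n$.

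\textbf{Steps 1 and 2: the ODE-based estimates.} From \eqref{eq:ode}, adding and subtracting $\alpha'_u \Phi^{\alpha}_{s,u}$ gives
\eqstar{\Phi^{\alpha}_{s,t}-\Phi^{\alpha'}_{s,t} = \int_s^t (\alpha_u-\alpha'_u)\Phi^{\alpha}_{s,u}\,\dd u + \int_s^t \alpha'_u\bigl(\Phi^{\alpha}_{s,u}-\Phi^{\alpha'}_{s,u}\bigr)\dd u,}
and Gr\"onwall's lemma combined with \eqref{eq:est_1} and $|\alpha'_u|\leq\|a\|_{L^\infty}$ yields \eqref{eq:ste104_quin}. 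From \eqref{eq:ste10}, the identity $m^{\alpha,\beta}_t-m^{\alpha,\beta'}_t=\int_0^t\Phi^{\alpha}_{s,t}(\beta_s-\beta'_s)\,\dd s$ together with \eqref{eq:est_1} gives \eqref{eq:ste104ter}, while $m^{\alpha,\beta}_t-m^{\alpha',\beta}_t=\int_0^t(\Phi^{\alpha}_{s,t}-\Phi^{\alpha'}_{s,t})\beta_s\,\dd s$ combined with \eqref{eq:ste104_quin} and $|\beta_s|\leq\|b\|_{L^\infty}$ yields \eqref{eq:ste104bis}. For \eqref{eq:ste104}, I would write
\eqstar{\langle\eta, C^{\alpha}_t\eta\rangle - \langle\eta, C^{\alpha'}_t\eta\rangle = \int_0^t \Bigl(\bigl|\s^\top(\Phi^{\alpha}_{s,t})^\top\eta\bigr|^2 - \bigl|\s^\top(\Phi^{\alpha'}_{s,t})^\top\eta\bigr|^2\Bigr)\dd s,}
factor the difference of squares, and apply \eqref{eq:ste104_quin} and \eqref{eq:est_1} to extract the factor $t|\eta|^2\int_0^t |\alpha_u-\alpha'_u|\dd u$.

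\textbf{Step 3: the L\'evy exponent bound \eqref{eq:ste104quat} (main obstacle).} Setting $G_\eta(M,y) := \ee^{i\langle\eta,My\rangle}-1-\mathbf{1}_{\{|y|<1\}}i\langle\eta,My\rangle$, one has
\eqstar{n^{\alpha}_t(\eta) - n^{\alpha'}_t(\eta) = \int_0^t\!\int_{\R^d}\bigl[G_\eta(\Phi^{\alpha}_{s,t},y) - G_\eta(\Phi^{\alpha'}_{s,t},y)\bigr]\nu(\dd y)\,\dd s.}
Interpolating $M_\lambda := (1-\lambda)\Phi^{\alpha'}_{s,t} + \lambda\Phi^{\alpha}_{s,t}$ and differentiating gives
\eqstar{\frac{\dd}{\dd\lambda}G_\eta(M_\lambda,y) = i\langle\eta,(\Phi^{\alpha}_{s,t}-\Phi^{\alpha'}_{s,t})y\rangle\bigl(\ee^{i\langle\eta,M_\lambda y\rangle}-\mathbf{1}_{\{|y|<1\}}\bigr).}
For $|y|\geq 1$, bounding the parenthesis by $1$ yields $|G_\eta(\Phi^\alpha_{s,t},y)-G_\eta(\Phi^{\alpha'}_{s,t},y)|\leq |\eta|\,|y|\,|\Phi^{\alpha}_{s,t}-\Phi^{\alpha'}_{s,t}|$. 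For $|y|<1$, the parenthesis reduces to $\ee^{i\langle\eta,M_\lambda y\rangle}-1$, whose magnitude is $\leq |\eta||M_\lambda||y|\leq\kappa|\eta||y|$ by \eqref{eq:est_1}, yielding $|G_\eta(\Phi^\alpha_{s,t},y)-G_\eta(\Phi^{\alpha'}_{s,t},y)|\leq \kappa |\eta|^2|y|^2|\Phi^{\alpha}_{s,t}-\Phi^{\alpha'}_{s,t}|$. Since $|y|\mathbf{1}_{\{|y|\geq 1\}} + |y|^2\mathbf{1}_{\{|y|<1\}}\leq |y|(1\wedge|y|)$, Assumption \ref{ass:levy} controls the $\nu$-integral by $\kappa\bar n\,|\eta|(1+|\eta|)\,|\Phi^{\alpha}_{s,t}-\Phi^{\alpha'}_{s,t}|$; invoking Step 1 and integrating in $s\in[0,t]$ produces \eqref{eq:ste104quat}.

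The hard part is precisely this last step: one must exploit the compensated-jump cancellation (the presence of $\ee^{ix}-1$ rather than $\ee^{ix}$ on $\{|y|<1\}$), produced by the $\lambda$-interpolation, to gain an extra factor of $|y|$ near the origin. This makes the integrand against $\nu(\dd y)$ match exactly the single moment $\bar n$ from Assumption \ref{ass:levy}. A naive Lipschitz bound $|\ee^{ix}-\ee^{ix'}|\leq|x-x'|$ applied uniformly would instead force a second-moment assumption on small jumps, which lies outside the hypotheses of the lemma.
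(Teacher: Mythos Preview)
Your proof is correct and close in spirit to the paper's, with a minor organizational difference worth noting. You first establish \eqref{eq:ste104_quin} via Gr\"onwall and then derive \eqref{eq:ste104} and \eqref{eq:ste104bis} from it by direct substitution into the integral formulas \eqref{eq:ste10} (factoring a difference of squares for $C^\alpha$); the paper instead treats each of $C^\alpha-C^{\alpha'}$, $m^{\alpha,\beta}-m^{\alpha',\beta}$, $\Phi^\alpha-\Phi^{\alpha'}$ by its own Gr\"onwall argument on the corresponding ODE (e.g.\ \eqref{eq:cauchy_C} for $C$). Your route is slightly more economical since one Gr\"onwall application feeds all the others. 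For \eqref{eq:ste104quat}, your $\lambda$-interpolation is exactly the integral form of the Lagrange mean-value theorem that the paper invokes on the function $f(x)=\int_{\R^d}(\ee^{i\langle x,y\rangle}-1-\mathbf{1}_{\{|y|<1\}}i\langle x,y\rangle)\nu(\dd y)$; both arguments exploit the same cancellation to land on $|y|(1\wedge|y|)$ and hence use only $\bar n$.
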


We are now ready to prove Theorems \ref{lem:contraction} and \ref{lem:contraction_d1}.
\begin{proof}[Proof of Theorem \ref{lem:contraction}]
Throughout this proof, we denote by $\kappa$ any positive constant that depends at most on  $T$, $\theta$, $\| a \|_{L^{\infty}}$, $\| b \|_{L^{\infty}}$, $\frac{1}{(2\pi)^d}\| \hat{a} \|_{L^1}$, $\frac{1}{(2\pi)^d}\| \hat{b} \|_{L^1}$, $
\nu(\dd y)$ and $\Eb[|Y|]$. 

\vspace{3pt}
\noindent\underline{Step 1}: For any $0\leq t\leq T$, we first prove that 
\begin{align}
& |\Psi_{1,t}(\alpha,\beta) -  \Psi_{1,t}(\alpha',\beta')|+|\Psi_{2,t}(\alpha,\beta) -  \Psi_{2,t}(\alpha',\beta')|\\
&\leq 
\kappa \bigg(\int_0^t |\a_s -\a'_s| \dd s + \int_0^t |\b_s -\b'_s| \dd s\bigg) \int_{\R^d} \big(|\hat{a}(\eta)| +|\hat{b}(\eta)| \big) \Bigl(t |\eta|^2 + |\eta| \bigl(t +1 \bigr)  \Bigr)\ee^{-\frac{ t|\eta|^2}{2 \kappa}}\dd \eta.
\label{eq:estim_Psi}
\end{align}
By Proposition \ref{prop_verification} and by triangular inequality, we obtain the following 
\begin{equation}
|\Psi_{1,t}(\alpha,\beta) -  \Psi_{1,t}(\alpha',\beta')| \leq \frac{1}{(2\pi)^d} \sum_{i=1}^5 I_i,
\end{equation}
where
\begin{align}
I_1 &=     \int_{\R^d} |\hat{a}(\eta)| \left| \ee^{-\frac{1}{2}\langle \eta, C^{\alpha}_t\eta\rangle} - \ee^{-\frac{1}{2}\langle \eta, C^{\alpha'}_t\eta\rangle } \right| \left| \ee^{- i\,\langle  \eta, m^{\alpha,\beta}_t\rangle +   n^{\alpha}_t(-\eta) } \right| \big|\hat{\mu}_Y \big(\!-\!(\Phi_{0,t}^{\alpha})^\top \eta\big)\big| \dd \eta     ,\\
I_2 &=   \int_{\R^d} |\hat{a}(\eta)| \left| \ee^{ -\frac{1}{2} \langle \eta, C^{\alpha'}_t \eta \rangle + n^{\alpha}_t (-\eta) }\right| \left|\ee^{- i\,\langle  \eta, m^{\alpha,\beta}_t\rangle } - \ee^{- i\,\langle  \eta, m^{\alpha',\beta}_t\rangle } \right|  \big|\hat{\mu}_Y \big(\!-\!(\Phi_{0,t}^{\alpha})^\top \eta\big)\big| \dd \eta     ,\\
I_3 &=   \int_{\R^d} |\hat{a}(\eta)| \left| \ee^{ -\frac{1}{2} \langle \eta, C^{\alpha'}_t \eta \rangle + n^{\alpha}_t (-\eta) }\right| \left|\ee^{- i\,\langle  \eta, m^{\alpha',\beta}_t\rangle } - \ee^{- i\,\langle  \eta, m^{\alpha',\beta'}_t\rangle } \right|  \big|\hat{\mu}_Y \big(\!-\!(\Phi_{0,t}^{\alpha})^\top \eta\big)\big| \dd \eta     ,\\
I_4 &=    \int_{\R^d} |\hat{a}(\eta)| \left| \ee^{ -\frac{1}{2} \langle \eta, C^{\alpha'}_t \eta \rangle  - i\,\langle  \eta, m^{\alpha',\beta'}_t\rangle }\right| \left|\ee^{n^{\alpha}_t (-\eta)  } - \ee^{ n^{\alpha'}_t (-\eta) } \right|  \big|\hat{\mu}_Y \big(\!-\!(\Phi_{0,t}^{\alpha})^\top \eta\big)\big| \dd \eta     ,\\
I_5 &=    \int_{\R^d} |\hat{a}(\eta)| \left| \ee^{ -\frac{1}{2} \langle \eta, C^{\alpha'}_t \eta \rangle  - i\,\langle  \eta, m^{\alpha',\beta'}_t\rangle + n^{\alpha'}_t (-\eta) }\right| \big|\hat{\mu}_Y  \big(\!-\!(\Phi_{0,t}^{\alpha})^\top \eta\big) - \hat{\mu}_Y  \big(\!-\!(\Phi_{0,t}^{\alpha'})^\top \eta\big) \big| \dd \eta      .
\end{align}
By \eqref{eq:ste104}, combined with \eqref{eq:estimate_quad_form}, we obtain that
\begin{equation}
 \left| \ee^{-\frac{1}{2}\langle \eta, C^{\alpha}_t\eta\rangle} - \ee^{-\frac{1}{2}\langle \eta, C^{\alpha'}_t\eta\rangle } \right| \leq 
 \kappa\, t |\eta|^2 \ee^{- \frac{t|\eta|^2}{2 \kappa}} \int_0^t | \alpha_s - \alpha'_s | \dd s ,
\end{equation}
while by \eqref{eq:estim_n}, for any $\eps>0,$ we have that
\begin{equation}\label{eq:estim_e_n}
\big| \ee^{   n^{\alpha}_t(-\eta) } \big| \leq \kappa\, \ee^{t(  \eps |\eta|^2    + \kappa  |\eta|)} \leq \kappa\, \ee^{2 \eps |\eta|^2 }.
\end{equation}
Taking $\eps$ suitably small in \eqref{eq:estim_e_n}, yields that
\begin{equation}
I_1 \leq \kappa \int_{\R^d}  |\hat{a}(\eta)|\, t |\eta|^2  \ee^{-\frac{ t|\eta|^2}{2 \kappa}}\dd \eta \int_0^t |\a_s -\a'_s| \dd s.
\end{equation}
Similarly, using \eqref{eq:ste104bis}-\eqref{eq:ste104ter}-\eqref{eq:ste104_quin}-\eqref{eq:ste104quat} and again \eqref{eq:estim_n}-\eqref{eq:estimate_quad_form}, and the fact that
\begin{equation}
\big|\hat{\mu}_Y  \big(\!-\!(\Phi_{0,t}^{\alpha})^\top \eta\big) - \hat{\mu}_Y  \big(\!-\!(\Phi_{0,t}^{\alpha'})^\top \eta\big) \big| \leq \big|\big(  (\Phi^{\alpha}_{0,t})^\top   -   (\Phi_{0,t}^{\alpha'})^\top  \big)\eta  \big|  \, \E [|Y|],
\end{equation}
we obtain that 
\begin{align}
I_2 &\leq \kappa \int_{\R^d}  |\hat{a}(\eta)|\, t |\eta|  \ee^{-\frac{ t|\eta|^2}{2 \kappa}}\dd \eta \int_0^t |\a_s -\a'_s| \dd s,\\
I_3 &\leq \kappa \int_{\R^d}  |\hat{a}(\eta)|\,  |\eta|  \ee^{-\frac{ t|\eta|^2}{2 \kappa}}\dd \eta \int_0^t |\b_s -\b'_s| \dd s,\\
I_4 &\leq \kappa \int_{\R^d}  |\hat{a}(\eta)|\, t |\eta|(1+|\eta|)  \ee^{-\frac{ t|\eta|^2}{2 \kappa}}\dd \eta \int_0^t |\a_s -\a'_s| \dd s,\\
I_5 &\leq \kappa \int_{\R^d}  |\hat{a}(\eta)|\,  |\eta|  \ee^{-\frac{ t|\eta|^2}{2 \kappa}}\dd \eta \int_0^t |\a_s -\a'_s| \dd s.
\end{align}
These, together with analogous estimates for $|\Psi_{2,t}(\alpha,\beta) -  \Psi_{2,t}(\alpha',\beta')|$, prove \eqref{eq:estim_Psi}. 

\vspace{3pt}
\noindent\underline{Step 2}: Combining \eqref{eq:estim_Psi} with the standard Gaussian estimate (see Lemma A.1. in \cite{gobet2016analytical})
\eqlnostar{eq:guassian_estimate}{
|\eta|^{\gamma} \ee^{- \frac{\rho |\eta|^2}{2} } \leq \Big(  \frac{2 \gamma}{e}  \Big)^{\frac{\gamma}{2}} \rho ^{-\frac{\gamma}{2}} \ee^{-\frac{\rho |\eta|^2}{4}},  \quad  \eta\in\R^d,\ \rho>0, \ \gamma\geq 0,
}
we obtain that
\begin{align}
& |\Psi_{1,t}(\alpha,\beta) -  \Psi_{1,t}(\alpha',\beta')| + |\Psi_{2,t}(\alpha,\beta) -  \Psi_{2,t}(\alpha',\beta')| \\ 
 \label{eq:estimate_psi_proof}
&\leq \frac{\kappa}{\sqrt{t}} \bigg(\int_0^t |\a_s -\a'_s| \dd s + \int_0^t |\b_s -\b'_s| \dd s\bigg)\biggl(\frac{1}{(2\pi)^d} \int_{\R^d} (|\hat{a}(\eta)|+|\hat{b}(\eta)|) \ee^{-\frac{ t|\eta|^2}{4 \kappa}}\dd \eta \biggr)
\intertext{(using $\hat{a}\in L^1(\R^d)$)}
& \leq \frac{\kappa}{\sqrt{t}} \bigg(\int_0^t |\a_s -\a'_s| \dd s + \int_0^t |\b_s -\b'_s| \dd s\bigg) , 
\end{align}
which for any $t\in[0,T],$ implies that
\begin{align}
\ee^{-\lambda t}  |\Psi_{1,t}(\alpha,\beta) -  \Psi_{1,t}(\alpha',\beta')| &\leq \frac{\kappa}{\sqrt{t}} \bigg(\int_0^t \ee^{-\lambda (t-s)} \ee^{-\lambda s} |\a_s -\a'_s| \dd s + \int_0^t \ee^{-\lambda (t-s)} \ee^{-\lambda s}|\b_s -\b'_s| \dd s\bigg)\\
&\leq  \big( \|  \a-\a' \|_{T,\lambda} + \|  \b-\b' \|_{T,\lambda} \big) \frac{\kappa}{\sqrt{\lambda}} \frac{(1-\ee^{-\lambda t})}{\sqrt{\lambda t}}\\
&\leq \big( \|  \a-\a' \|_{T,\lambda} + \|  \b-\b' \|_{T,\lambda} \big) \frac{\kappa}{\sqrt{\lambda}}.
\end{align}
The same estimate holds for $ |\Psi_{2,t}(\alpha,\beta) -  \Psi_{2,t}(\alpha',\beta')|$ and thus, taking $\lambda$ suitably large yields the result.
\end{proof}
\begin{proof}[Proof of Theorem \ref{lem:contraction_d1}]
Throughout this proof we denote by $\kappa$ any positive constant that depends at most on  $T$, $\theta$, $\| a \|_{L^{\infty}}$, $\| b \|_{L^{\infty}}$, $\frac{1}{(2\pi)^d}\| \hat{a} \|_{L^2}$, $\frac{1}{(2\pi)^d}\| \hat{b} \|_{L^2}$, $\nu(\dd y)$ and $\Eb[|Y|]$. 

Applying \eqref{eq:guassian_estimate} with $\gamma=2/3,$ for any $\rho>0$ and $\eta\in\R,$ we obtain that
\begin{align}
\int_{\R} (|\hat{a}(\eta)|+|\hat{b}(\eta)|) \ee^{-\frac{ \rho |\eta|^2}{2}}\dd \eta   &=  \int_{\R} \frac{|\hat{a}(\eta)|+|\hat{b}(\eta)|}{|\eta|^{2/3}} |\eta|^{2/3} \ee^{-\frac{ \rho|\eta|^2}{2}}\dd \eta  \\
&  \leq \Big(  \frac{4}{3 \ee}  \Big)^{\frac{1}{3}}  \rho^{-\frac{1}{3}} \int_{\R} \frac{|\hat{a}(\eta)|+|\hat{b}(\eta)|}{|\eta|^{2/3}}  \ee^{-\frac{ \rho|\eta|^2}{4}}\dd \eta   
\intertext{(by Plancherel's theorem)}
   &     \leq \Big(  \frac{4}{3 \ee}  \Big)^{\frac{1}{3}} \rho^{-\frac{1}{3}} \int_{\R} \big(|\hat{a}(\eta)|+|\hat{b}(\eta)|\big)^2 \dd \eta    \int_{\R} |\eta|^{-4/3} \dd \eta.
\end{align}
Therefore, \eqref{eq:estimate_psi_proof} yields that
\begin{equation}
 |\Psi_{1,t}(\alpha,\beta) -  \Psi_{1,t}(\alpha',\beta')| + |\Psi_{2,t}(\alpha,\beta) -  \Psi_{2,t}(\alpha',\beta')| \leq \frac{\kappa}{t^{{5}/{6}}} \bigg(\int_0^t |\a_s -\a'_s| \dd s + \int_0^t |\b_s -\b'_s| \dd s\bigg) , 
\end{equation}
which for any $t\in[0,T],$ implies that
\begin{align}
\ee^{-\lambda t}  |\Psi_{1,t}(\alpha,\beta) -  \Psi_{1,t}(\alpha',\beta')| &\leq \frac{\kappa}{t^{5/6}} \bigg(\int_0^t \ee^{-\lambda (t-s)} \ee^{-\lambda s} |\a_s -\a'_s| \dd s + \int_0^t \ee^{-\lambda (t-s)} \ee^{-\lambda s}|\b_s -\b'_s| \dd s\bigg)\\
&\leq  \big( \|  \a-\a' \|_{T,\lambda} + \|  \b-\b' \|_{T,\lambda} \big) \frac{\kappa}{\lambda^{1/6}} \frac{(1-\ee^{-\lambda t})}{(\lambda t)^{5/6}}\\
&\leq \big( \|  \a-\a' \|_{T,\lambda} + \|  \b-\b' \|_{T,\lambda} \big) \frac{\kappa}{\lambda^{1/6}}.
\end{align}
The same estimate holds for $ |\Psi_{2,t}(\alpha,\beta) -  \Psi_{2,t}(\alpha',\beta')|$ and thus, taking $\lambda$ suitably large yields the result.
\end{proof}

\subsection{The case $a,b\notin L^{1}(\mathbb{R}^d)$}
\label{sec:damping}
In order to extend the previous proposition to the case where $a,b$ are not necessarily integrable, we make use of the so-called \emph{damping method}. The intuitive idea behind it is rather simple and can be summarized, loosely, as follows. Assume that we wish to compute the expectation $\Eb[g(Z)]$ for a given function $g$ that has no Fourier transform (say, $g\in L^{\infty}$), and for random variable $Z$ whose density $f_Z$ is fast decreasing in the tails. Then, we seek a \emph{damping function} $\varphi$ such that both $\overline{g}(x):=\varphi(x)g(x)$ and $f_Z/\varphi$ admit a Fourier transform. The inversion formula then yields
\begin{equation}
\int_{\R} g(x)f_Z(x)\dd x =  \frac{1}{2\pi} \int_{\R}   \hat{\overline{g}}(\eta)\, \mathcal{F}(f_Z/\varphi)(-\eta) \dd \eta,
\end{equation}
which is useful as long as we have an explicit expression for $\mathcal{F}(f_Z/\varphi)(-\eta)$. In our case, we choose a damping function for $a$ and $b$ of the type $\varphi(x)=(1+\sum_{j=1}^d x_j^{q})^{-1}$, for a suitable even $q\in\mathbb{N}$ such that $a \varphi,b \varphi \in L^1(\mathbb{R}^d)\cap L^2(\mathbb{R}^d)$. This choice allows us to take advantage of the Fourier transform properties and compute their Fourier transform as
\begin{equation}
\mathcal{F}(f_Z/\varphi)(-\eta) = \Big(1+i^q \sum_{j=1}^d \partial_{\eta_j}^{q}\Big) \hat{f_Z}(-\eta) .
\end{equation}

Hereafter, we set $q= 2\ceil*{({d+1})/{2}}$ (the smallest positive even integer greater or equal than $d+1$) and define: 
\begin{equation}\label{eq:ste28}
\overline{a}(x):= \frac{a(x)}{1+\sum_{j=1}^d x_j^q}, \qquad \overline{b}(x):= \frac{b(x)}{1+\sum_{j=1}^d x_j^q},\qquad x\in\mathbb{R}^d.
\end{equation}
We note that, for this choice of $q$, the functions $\overline{a},\overline{b}\in L^1(\mathbb{R}^d)\cap L^2(\mathbb{R}^d)$ under the assumption that $a,b\in L^{\infty}(\mathbb{R}^d)$. We are then able to weaken Assumption \ref{assump:coeff} as following:  
\begin{assumption}\label{assump:coeff_bis}
The coefficients $a,b\in L^{\infty}(\R^d)$.
\end{assumption}
To replace Assumption \ref{assump:coeff} with Assumption \ref{assump:coeff_bis}, we pay the following cost which is an additional condition on the L\'evy measure $\nu(\dd y)$ and distribution of the initial datum $Y$, in order to ensure that the function
$x\mapsto\big(1+\sum_{j=1}^d x_j^q\big) f_{X^{(\alpha,\beta)}_t}(x)$ belongs to $L^{2}(\mathbb{R}^d)$.
\begin{assumption}
\label{ass:levy_bis}
For $q= 2\ceil*{({d+1})/{2}},$ the L\'evy measure $\nu$ is such that
\begin{equation}
\label{eq:ste52_bis}
\bar{n}_q:= \int_{|y|\geq 1}|y|^{q+1} \nu(\dd y) <\infty,
\end{equation}
and the initial datum $Y$ is such that 
\begin{equation}
\label{eq:finite_moments}
\Eb[|Y|^{q+1}]<\infty.
\end{equation}
\end{assumption}

\begin{proposition}\label{prop_verification_bis}[Fourier representation]
Under Assumption \ref{ellipticity}, \ref{assump:coeff_bis} and \ref{ass:levy_bis}, for any $T>0$ and $(\a,\b)\in L^{\infty}([0,T]:\mathcal{M}^{d\times d}\times \R^d), t\in[0,T],$ we have that
\begin{align}\label{eq:fourier_rep1_bis}
{\Psi_{1,t}}(\alpha,\beta)&= \frac{1}{(2\pi)^d} \int_{\R^d} \hat{\bar{a}}(\eta)\, \Lc\bigg(\!\! \exp\bigg(\!{-\frac{1}{2}\langle \eta, C^{\alpha}_t\eta\rangle - i\,\langle  \eta, m^{\alpha,\beta}_t\rangle +   n^{\alpha}_t(-\eta)}\!\bigg) 
\hat{\mu}_Y \!\left(-(\Phi^{\alpha}_{0,t})^\top \eta\right)\!\!\bigg)\dd \eta,\\ \label{eq:fourier_rep2_bis}
{\Psi_{2,t}}(\alpha,\beta)&= \frac{1}{(2\pi)^d} \int_{\R^d} \hat{\bar{b}}(\eta)\, \Lc\bigg(\! \exp\bigg(\!{-\frac{1}{2}\langle \eta, C^{\alpha}_t\eta\rangle - i\,\langle  \eta, m^{\alpha,\beta}_t\rangle +   n^{\alpha}_t(-\eta)}\!\bigg) 
\hat{\mu}_Y \!\left(-(\Phi^{\alpha}_{0,t})^\top \eta\right)\!\!\bigg)\dd \eta,
\end{align}
for any $t\in[0,T]$, where $C^{\alpha},m^{\alpha,\beta},n^{\alpha},\Phi^{\alpha}$ are defined as in \eqref{eq:ste10}--\eqref{eq:ode}, and $\Lc$ is the operator defined as
\begin{equation}\label{eq:q}
\Lc :=  \Big(1+i^q \sum_{j=1}^d \partial_{\eta_j}^{q}\Big),\quad q:= 2\ceil*{({d+1})/{2}}.
\end{equation}
\end{proposition}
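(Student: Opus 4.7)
The plan is the damping strategy laid out immediately before the statement. Using the decomposition $a(x) = \bigl(1+\sum_{j=1}^d x_j^q\bigr) \bar a(x)$ and the existence of an $L^1(\R^d)\cap L^2(\R^d)$ density $f_{X^{(\alpha,\beta)}_t}$ (which follows, exactly as in the proof of Proposition \ref{prop_verification}, from \eqref{eq:estimate_quad_form}--\eqref{eq:estim_n}), I write
\eqstar{
\Psi_{1,t}(\alpha,\beta) = \int_{\R^d} a(x)\,f_{X^{(\alpha,\beta)}_t}(x)\,\dd x = \int_{\R^d} \bar a(x)\, g(x)\,\dd x,\quad g(x):=\Bigl(1+\sum_{j=1}^d x_j^q\Bigr) f_{X^{(\alpha,\beta)}_t}(x).
}
Plancherel's identity will then yield
\eqstar{
\Psi_{1,t}(\alpha,\beta) = \frac{1}{(2\pi)^d}\int_{\R^d}\hat{\bar a}(\eta)\,\hat g(-\eta)\,\dd\eta,
}
after which the pre-computed identity $\hat g(-\eta) = \Lc\,\hat{\mu}_{X^{(\alpha,\beta)}_t}(-\eta)$ (noted in the section preamble, stemming from $\mathcal{F}[x_j^q f](\xi)=i^q\partial^q_{\xi_j}\hat f(\xi)$ for even $q$, since $(-i)^q=i^q$) and the substitution of Lemma \ref{lem:ito} for $\hat\mu_{X^{(\alpha,\beta)}_t}(-\eta)$ deliver \eqref{eq:fourier_rep1_bis}. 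The formula \eqref{eq:fourier_rep2_bis} is obtained verbatim by replacing $(a,\bar a)$ with $(b,\bar b)$.

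The hypotheses for Plancherel need to be verified: I need $\bar a, g \in L^1(\R^d)\cap L^2(\R^d)$. Since $a,b\in L^\infty(\R^d)$ and $q\ge d+1$, the weight $(1+\sum_j x_j^q)^{-p}$ is integrable for $p=1,2$, so $\bar a,\bar b\in L^1(\R^d)\cap L^2(\R^d)$. Assumption \ref{ass:levy_bis} together with Lemma \ref{lemm:estimates_bis} and boundedness of the drift yield $\E[|X^{(\alpha,\beta)}_t|^{q+1}]<\infty$, whence $g\in L^1(\R^d)$, and differentiating $q$ times under the integral in the definition of $\hat{\mu}_{X^{(\alpha,\beta)}_t}$ is justified, giving the pointwise identity $\hat g(-\eta)=\Lc\,\hat\mu_{X^{(\alpha,\beta)}_t}(-\eta)$.

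The main technical obstacle I foresee is showing that $\Lc\,\hat\mu_{X^{(\alpha,\beta)}_t}(-\cdot)\in L^2(\R^d)$, which by Parseval's identity supplies the remaining ingredient $g\in L^2(\R^d)$. I would substitute the closed form of Lemma \ref{lem:ito} for $\hat\mu_{X^{(\alpha,\beta)}_t}(-\eta)$ and expand $\partial^q_{\eta_j}$ via the general Leibniz/Fa\`{a} di Bruno rule. Each resulting term is a product of derivatives of the Gaussian exponent $-\tfrac12\langle\eta,C^\alpha_t\eta\rangle$, of the linear phase $-i\langle\eta,m^{\alpha,\beta}_t\rangle$, of $n^\alpha_t(-\eta)$, and of the factor $\hat\mu_Y(-(\Phi^\alpha_{0,t})^\top\eta)$. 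The first three grow at most polynomially in $\eta$ by Lemma \ref{lemm:estimates_bis} and by \eqref{eq:ste52_bis} (derivatives of $n^\alpha_t$ of order $k\le q$ involve $\int|\Phi^\alpha_{s,t}y|^k\nu(\dd y)$, all finite thanks to \eqref{eq:ste52_bis} and to the defining integrability of $\nu$ near the origin), while $\partial^q\hat\mu_Y$ is bounded by $\E[|Y|^q]<\infty$. The full product is then dominated by a polynomial in $|\eta|$ times the envelope $\exp\bigl(-\tfrac12\langle\eta,C^\alpha_t\eta\rangle\bigr)\,\bigl|\ee^{n^\alpha_t(-\eta)}\bigr|$, and absorbing the $n^\alpha_t$ contribution as in \eqref{eq:estim_e_n} together with the ellipticity bound \eqref{eq:estimate_quad_form} reduces it to a polynomial times $\ee^{-c|\eta|^2}$, which lies in $L^2(\R^d)$. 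This closes the chain of integrabilities and concludes the proof.
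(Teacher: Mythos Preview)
Your proposal is correct and follows essentially the same route as the paper: both reduce to showing that $\Lc\,\hat\mu_{X^{(\alpha,\beta)}_t}\in L^2(\R^d)$ by differentiating the explicit formula from Lemma \ref{lem:ito} factor-by-factor and controlling each piece via Lemma \ref{lemm:estimates_bis}, the bounds \eqref{eq:bound_der_n}--\eqref{eq:der_charact_Y}, and the Gaussian decay \eqref{eq:estimate_quad_form}. The only cosmetic difference is that you first establish $g\in L^1$ through the moment bound $\E[|X^{(\alpha,\beta)}_t|^{q+1}]<\infty$ and then differentiate $\hat\mu$ under the integral, whereas the paper works entirely on the Fourier side and invokes $L^2$ Fourier theory to identify $\mathcal{F}^{-1}\bigl(i^m\partial^m_{\eta_j}\hat\mu_{X^{(\alpha,\beta)}_t}\bigr)=x_j^m f_{X^{(\alpha,\beta)}_t}$ directly.
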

\begin{remark}
Note that the damping functions in \eqref{eq:ste28} are not the only possible choices. For instance, choosing 
\begin{equation}\label{eq:ste28_bis}
\overline{a}(x):= \frac{a(x)}{\prod_{j=1}^d (1+ x_j^2)}, \qquad \overline{b}(x):= \frac{b(x)}{\prod_{j=1}^d (1+ x_j^2)},\qquad x\in\mathbb{R}^d,
\end{equation}
and slightly reinforcing Assumption \ref{ass:levy_bis}, Proposition \ref{prop_verification_bis} would still hold true with $\Lc =   \prod_{j=1}^d (1+\partial_{\eta_j}^{2})$. This choice could be more suitable, sometimes, in order to explicitly compute $\hat{\bar{a}}(\eta)$ and $\hat{\bar{b}}(\eta)$.
\end{remark}
\begin{proof}
For any $j=1,\cdots, d,$ by employing \eqref{eq:est_1}, \eqref{eq:ste52_bis}, and the dominated convergence theorem, we obtain that
\begin{align}\label{eq:der_n_first}
\partial_{\eta_j} n^{\alpha}_t(\eta)& = i \int_{0}^t  \bigg( \int_{|y|<1}  \big( \Phi^{\alpha}_{s,t} y\big)_j \Big( \ee^{i \langle  \eta , \Phi^{\alpha}_{s,t} y\rangle} -1\Big) \nu(\dd y) + \int_{|y|\geq 1}  \big( \Phi^{\alpha}_{s,t} y\big)_j  \ee^{i \langle  \eta , \Phi^{\alpha}_{s,t} y\rangle}  \nu(\dd y)\bigg)  \dd s,\\
\partial^{m}_{\eta_j} n^{\alpha}_t(\eta) &= i^{m}  \int_{0}^t  \int_{\R^d}   \big( \Phi^{\alpha}_{s,t} y\big)_j^m  \ee^{i \langle  \eta , \Phi^{\alpha}_{s,t} y\rangle}  \nu(\dd y)  \dd s,\qquad m=2,\cdots, q, \label{eq:der_n_m}
\end{align}
and
\begin{align}\label{eq:bound_der_n}
| \partial_{\eta_j} n^{\alpha}_t(\eta)|  &\leq  t \bigg( |\eta| \ee^{2 T \|\a \|_{T,0}}  \int_{|y|<1}  |y|^2  \nu(\dd y) +  \ee^{ T \|\a \|_{T,0}} \int_{|y|\geq 1}|y|\nu(\dd y) \bigg)   \\ \label{eq:bound_der_n_higher}
|\partial^{m}_{\eta_j} n^{\alpha}_t(\eta)|   &\leq      t   \ee^{m T \|\a \|_{T,0}}  \int_{\R^d}  |y|^m  \nu(\dd y), \qquad m=2,\cdots,q.
\end{align}
Moreover, for any $j=1,\cdots, d$ and $m=1,\cdots,q,$ by \eqref{eq:finite_moments} it is straightforward to see that 
\begin{equation}
\partial^m_{\eta_j} \hat{\mu}_Y \big(  (\Phi^{\alpha}_{0,t})^\top \eta\big) = i^m \int_{\R^d}  \big( \Phi^{\alpha}_{0,t} y\big)^m_j \ee^{i\langle \eta , \Phi^{\alpha}_{0,t} y \rangle} \mu_{Y}(\dd y),
\end{equation}
and
\begin{equation}\label{eq:der_charact_Y}
\big|\partial^m_{\eta_j} \hat{\mu}_Y \big(  (\Phi^{\alpha}_{0,t})^\top \eta\big) \big| \leq  \ee^{m T \|\a \|_{T,0}} E\big[  |Y|^m \big].
\end{equation}
Combining these estimates with \eqref{eq:ste13}, \eqref{eq:estim_n} and \eqref{eq:estimate_quad_form}, with $\delta$ suitably small, we conclude that 
\begin{equation}
\partial^m_{\eta_j} \hat{\mu}_{X^{(\a,\b)}_t}  \in L^p(\R^d), \quad  j=1,\cdots, d,\quad m=1,\cdots,q, \quad p\in \mathbb{N}.
\end{equation}
In particular, recalling that ${\mu}_{X^{(\a,\b)}_t}$ has a density $f_{X^{(\a,\b)}_t}$ given by \eqref{eq:density}, basic properties of the Fourier transform on $L^2(\R^d)$ yield that
\begin{equation}
\mathcal{F}^{-1} \big( i^m \partial^m_{\eta_j} \hat{\mu}_{X^{(\a,\b)}_t}\big) (x) =x_j^m {f}_{X^{(\a,\b)}_t} (x) \in L^2(\R^d).
\end{equation}
Assumption \ref{assump:coeff_bis} and \eqref{eq:ste28} also yield that $\bar{a},\bar{b}\in L^{1}(\R^d)\cap L^{2}(\R^d)$. Summing up, we can apply Plancherel's theorem and obtain the following
\eqstar{
\Psi_{1,t}(\alpha,\beta) = \int_{\R^d} a(x) {f}_{X^{(\a,\b)}_t}(x) \dd x &= \int_{\R^d} \bar{a}(x) 
\Big( 1+\sum_{j=1}^d x_j^q \Big) {f}_{X^{(\a,\b)}_t}(x) \dd x\\
&= \frac{1}{(2\pi)^d} \int_{\R^d}   \hat{\bar{a}}(\eta) \, \Lc \hat{\mu}_{X^{(\a,\b)}_t} (-\eta)  \dd \eta,
}
which, combined with \eqref{eq:ste13}, proves \eqref{eq:fourier_rep1_bis}. The proof of \eqref{eq:fourier_rep2_bis} is identical. 
\end{proof}

\begin{assumption}\label{assump:coeff_bis_B}
The Fourier transforms $\hat{\overline{a}},\hat{\overline{b}}\in L^1 (\R^d)$.
\end{assumption}

\begin{remark}
In order to prove the contraction property of the map $\Psi$ in the Fourier space, we were able to relax the assumption that $a,b\in L^1 (\R^d).$ However, the assumption that $\hat{\overline{a}},\hat{\overline{b}}\in L^1 (\R^d)$ cannot be relaxed except for the case $d=1$. It is necessary in order to handle general L\'evy jumps in the dynamics. For only diffusive dynamics, i.e.\ $L_t = \sigma W_t$ in \eqref{eq:levy_proc}, the contraction property, and thus, the existence and uniqueness for the fixed-point equation \eqref{eq:ste15}, can be proved by working in the original space under the sole assumption that $a,b \in L^{\infty}(\R^d)$. Also note that Proposition \ref{prop_verification_bis} does not rely on $\hat{\overline{a}},\hat{\overline{b}}\in L^1 (\R^d)$, and thus the Fourier representation \eqref{eq:fourier_rep1_bis}-\eqref{eq:fourier_rep2_bis} can be used for computational purposes as long as $a,b\in L^{\infty}(\R^d)$.
\end{remark}

\begin{theorem}\label{lem:contraction_bis}[Contraction property]
Suppose that Assumption \ref{ellipticity}, \ref{assump:coeff_bis}, \ref{ass:levy_bis} and \ref{assump:coeff_bis_B} hold. Then, for any $T,c>0$, there exists $\lambda>0$, only dependent on $c$, $T$, $\theta$, $\| a \|_{L^{\infty}}$, $\| b \|_{L^{\infty}}$,$\| \hat{\bar{a}} \|_{L^1}$, $\| \hat{\bar{b}} \|_{L^1}$, $
\nu(\dd y)$, $\Eb[|Y|^q]$ (with $q$ as in \eqref{eq:q}), and dimension $d$, such that 
\begin{align}\label{eq:contraction_bis}
\|\Psi(\alpha,\beta) - \Psi(\alpha',\beta')\|_{T,\lambda}& \leq c   \|(\alpha -\alpha',\beta -\beta' )\|_{T,\lambda},
\end{align}
for any $(\alpha,\beta),(\alpha',\beta')\in L^{\infty}([0,T]:\mathcal{M}^{d\times d}\times \R^d)$ with $\| (\alpha,\beta)\|_{T,0} ,\| (\alpha',\beta')\|_{T,0} \leq   \| a \|_{L^{\infty}}+ \| b \|_{L^{\infty}}$.
\end{theorem}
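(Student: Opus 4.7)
The plan is to follow the same template as the proof of Theorem \ref{lem:contraction}, but starting from the Fourier representations \eqref{eq:fourier_rep1_bis}--\eqref{eq:fourier_rep2_bis} instead of \eqref{eq:fourier_rep1}--\eqref{eq:fourier_rep2}. Writing $\mathcal{L}=1+i^q\sum_{j=1}^d\partial_{\eta_j}^q$ and denoting the inner exponential by $E_t^{\alpha,\beta}(\eta):=\exp\!\bigl(-\tfrac{1}{2}\langle\eta,C^{\alpha}_t\eta\rangle-i\langle\eta,m^{\alpha,\beta}_t\rangle+n^{\alpha}_t(-\eta)\bigr)$ and $G_t^{\alpha}(\eta):=\hat{\mu}_Y(-(\Phi^{\alpha}_{0,t})^\top\eta)$, Leibniz's rule gives
\begin{equation}
\mathcal{L}\bigl(E_t^{\alpha,\beta}G_t^{\alpha}\bigr)=E_t^{\alpha,\beta}G_t^{\alpha}+i^q\sum_{j=1}^d\sum_{k=0}^{q}\binom{q}{k}\partial_{\eta_j}^{k}E_t^{\alpha,\beta}\cdot\partial_{\eta_j}^{q-k}G_t^{\alpha}.
\end{equation}
The ``$1$'' contribution reproduces verbatim the bound of Theorem \ref{lem:contraction} once $\hat{a},\hat{b}$ are replaced by $\hat{\bar{a}},\hat{\bar{b}}$, so the novelty is confined to the higher-order terms.

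Next I would establish the two ingredients needed to control each summand in the double sum. \textbf{Pointwise bounds:} by repeatedly differentiating the formulas in \eqref{eq:ste10}--\eqref{eq:ste10bis}, $\partial_{\eta_j}^k E_t^{\alpha,\beta}$ is $E_t^{\alpha,\beta}$ times a polynomial in $\eta$ of total degree at most $2k$ whose coefficients are controlled by Lemma \ref{lemm:estimates_bis} and by \eqref{eq:bound_der_n}--\eqref{eq:bound_der_n_higher}; similarly $|\partial_{\eta_j}^{q-k}G_t^{\alpha}(\eta)|\le \mathrm{e}^{(q-k)T\|\alpha\|_{T,0}}\Eb[|Y|^{q-k}]$, finite by \eqref{eq:finite_moments}. \textbf{Lipschitz-in-}$\alpha$ \textbf{bounds:} arguing exactly as in Lemma \ref{lem:estimates} (first-order Taylor expansion of each coefficient followed by the triangle inequality and Gronwall), I would prove estimates of the form
\begin{align}
\bigl|\partial_{\eta_j}^m n^{\alpha}_t(\eta)-\partial_{\eta_j}^m n^{\alpha'}_t(\eta)\bigr|&\le\kappa\,t\,(1+|\eta|)\int_0^t|\alpha_u-\alpha'_u|\dd u,\\
\bigl|\partial_{\eta_j}^m G_t^{\alpha}(\eta)-\partial_{\eta_j}^m G_t^{\alpha'}(\eta)\bigr|&\le\kappa\,(1+|\eta|)\Eb[|Y|^{m+1}]\int_0^t|\alpha_u-\alpha'_u|\dd u,
\end{align}
for $m=0,\dots,q$, with $\kappa$ depending only on the quantities listed in the statement. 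Combining these with \eqref{eq:estimate_quad_form} and \eqref{eq:estim_n} (choosing $\delta$ small enough to absorb the real part of $n^{\alpha}_t(-\eta)$ into the Gaussian factor) yields, after a telescoping decomposition of $\mathcal{L}(E_t^{\alpha,\beta}G_t^{\alpha})-\mathcal{L}(E_t^{\alpha',\beta'}G_t^{\alpha'})$ analogous to the $I_1,\dots,I_5$ split in Step 1 of Theorem \ref{lem:contraction}, a master inequality of the form
\begin{equation}
|\Psi_{t}(\alpha,\beta)-\Psi_{t}(\alpha',\beta')|\le\kappa\Bigl(\int_0^t|\alpha_s-\alpha'_s|\dd s+\int_0^t|\beta_s-\beta'_s|\dd s\Bigr)\int_{\R^d}\bigl(|\hat{\bar a}(\eta)|+|\hat{\bar b}(\eta)|\bigr)P(|\eta|)\,\mathrm{e}^{-\frac{t|\eta|^2}{2\kappa}}\dd\eta,
\end{equation}
where $P$ is a polynomial whose degree depends only on $q$ (and hence only on $d$).

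Finally I would conclude exactly as in Steps 2 of Theorem \ref{lem:contraction}: applying the Gaussian estimate \eqref{eq:guassian_estimate} with $\gamma=\deg P$ moves the polynomial $P(|\eta|)$ into a factor $t^{-\deg P/2}$, leaving $\int_{\R^d}(|\hat{\bar a}|+|\hat{\bar b}|)\mathrm{e}^{-t|\eta|^2/(4\kappa)}\dd\eta$, which is bounded by $\|\hat{\bar a}\|_{L^1}+\|\hat{\bar b}\|_{L^1}$ under Assumption \ref{assump:coeff_bis_B}. This yields $|\Psi_t(\alpha,\beta)-\Psi_t(\alpha',\beta')|\le \kappa\,t^{-\gamma}(\int_0^t|\alpha_s-\alpha'_s|+|\beta_s-\beta'_s|\dd s)$ for some $\gamma<1$; multiplying by $\mathrm{e}^{-\lambda t}$ and carrying out the one-dimensional Laplace-type bound $\int_0^t\mathrm{e}^{-\lambda(t-s)}\dd s\le 1/\lambda$ gives a constant $\kappa/\lambda^{1-\gamma}$ in front of $\|(\alpha-\alpha',\beta-\beta')\|_{T,\lambda}$, which can be made smaller than $c$ by choosing $\lambda$ sufficiently large.

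The main obstacle I expect is the bookkeeping in proving the Lipschitz-in-$\alpha$ bounds for the higher derivatives of $n^{\alpha}$ and $G^{\alpha}$, because differentiating $q$ times pulls down factors $(\Phi^{\alpha}_{s,t}y)^m$ and $(\Phi^{\alpha}_{0,t}y)^m$ whose differences in $\alpha$ must be handled uniformly in $y$ using the integrability condition \eqref{eq:ste52_bis} and \eqref{eq:finite_moments}; this is routine but technical, and ensures that the polynomial $P$ obtained in the master inequality is truly fixed and independent of $\alpha,\alpha'$. Everything else reduces to adaptations of Lemma \ref{lem:estimates} and the Gaussian-absorption trick already used in the integrable case.
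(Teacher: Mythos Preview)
Your overall architecture matches the paper's proof: start from the Fourier representation of Proposition~\ref{prop_verification_bis}, expand $\mathcal{L}$ by Leibniz, treat the ``$1$'' part exactly as in Theorem~\ref{lem:contraction} with $\hat{\bar a},\hat{\bar b}$ in place of $\hat a,\hat b$, and handle the derivative terms via pointwise and Lipschitz bounds on $\partial_{\eta_j}^m n^{\alpha}_t$ and $\partial_{\eta_j}^m G^{\alpha}_t$. In particular, your Lipschitz estimate for $\partial_{\eta_j}^m n^{\alpha}_t$ is exactly the content of Lemma~\ref{lemm:estim_der_n}, which is the one new a~priori estimate the paper introduces for this theorem.

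There is, however, a real gap in your Gaussian-absorption step. You write the master inequality with a $t$-independent polynomial $P(|\eta|)$ and then apply \eqref{eq:guassian_estimate} with $\gamma=\deg P$ to obtain a factor $t^{-\deg P/2}$, after which you assert ``for some $\gamma<1$''. This does not follow: already the undifferentiated part contributes $t|\eta|^2$ (from the $I_1$-type term), so if you absorb the $t$ into $\kappa$ then $\deg P\ge 2$, and in fact $\deg P$ grows with $q$. For $\gamma\geq 1$ the function $x\mapsto (1-\ee^{-x})/x^{\gamma}$ is not bounded near $0$, so your final Laplace-type step collapses.

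What has to be tracked, and what the paper implicitly uses to obtain the sharp bound $\kappa/\sqrt{t}$ in \eqref{eq:estim_Psi_bis}, is that the coefficients of $P$ carry compensating powers of $t$. Every $\partial_{\eta_j}$ that hits $\ee^{-\frac12\langle\eta,C^{\alpha}_t\eta\rangle}$ pulls down $(C^{\alpha}_t\eta)_j$, and $|C^{\alpha}_t|\leq\kappa t$ by \eqref{eq:est_1_b}; derivatives of $\langle\eta,m^{\alpha,\beta}_t\rangle$ and of $n^{\alpha}_t$ are likewise bounded by $\kappa t(1+|\eta|)$ (see \eqref{eq:est_1_c}, \eqref{eq:bound_der_n}, \eqref{eq:bound_der_n_higher}). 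Hence every extra $|\eta|$ produced by differentiating $E^{\alpha,\beta}_t$ comes paired with a $t$, and a term $(t|\eta|)^k\ee^{-t|\eta|^2/(2\kappa)}$ contributes $t^{+k/2}$ after \eqref{eq:guassian_estimate}, not $t^{-k/2}$. The only $|\eta|$ that is \emph{not} compensated by a $t$ is the single one generated by the Lipschitz difference itself (as in $I_3$ and $I_5$), and this is what produces the worst factor $t^{-1/2}$. In short: do not absorb the $t$'s into $\kappa$ before applying \eqref{eq:guassian_estimate}; keep the polynomial as $P(t,|\eta|)$, bound each monomial $t^a|\eta|^b$ separately, and verify $a-b/2\ge -\tfrac12$ term by term. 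Once this is done, your Step~2 goes through verbatim with $\gamma=1/2$.
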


In the one-dimensional case the assumption $\hat{a},\hat{b}\in L^1 (\R^d)$ is not necessary. 
\begin{theorem}\label{lem:contraction_bis_d1}
Suppose that Assumption \ref{ellipticity}, \ref{assump:coeff_bis} and \ref{ass:levy_bis} hold, and $d=1$. Then the same conclusion as of Theorem \ref{lem:contraction_bis} holds, with $\lambda$ that depends on $\| \hat{\bar{a}} \|_{L^2}$, $\| \hat{\bar{b}} \|_{L^2}$ instead of $\| \hat{\bar{a}} \|_{L^1}$, $\| \hat{\bar{b}} \|_{L^1}$.
\end{theorem}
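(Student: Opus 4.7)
The approach is to combine two ingredients already present in the paper. First, the structure of the proof of Theorem \ref{lem:contraction_bis}, which adapts the scheme of Theorem \ref{lem:contraction} to the damped Fourier representation \eqref{eq:fourier_rep1_bis}--\eqref{eq:fourier_rep2_bis}. Second, the one-dimensional $L^{2}$-trick of the proof of Theorem \ref{lem:contraction_d1}, which replaces $L^{1}$ control of $\hat a,\hat b$ by $L^{2}$ control at the price of a fractional loss in the power of $t$. In $d=1$ we have $q=2$ and $\Lc=1-\partial_{\eta}^{2}$.

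Starting from \eqref{eq:fourier_rep1_bis}--\eqref{eq:fourier_rep2_bis}, I would telescope the difference $\Psi_{i,t}(\alpha,\beta)-\Psi_{i,t}(\alpha',\beta')$ exactly as in Step 1 of the proof of Theorem \ref{lem:contraction}, now with $\Lc$ acting on the characteristic-function factor $\ee^{g^{\alpha,\beta}_{t}(\eta)}\hat{\mu}_{Y}(-\Phi^{\alpha}_{0,t}\eta)$, where $g^{\alpha,\beta}_{t}(\eta):=-\tfrac12\eta^{2}C^{\alpha}_{t}-im^{\alpha,\beta}_{t}\eta+n^{\alpha}_{t}(-\eta)$. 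The telescoping must now account for up to two extra $\eta$-derivatives, and the relevant Lipschitz differences of $C^{\alpha}_{t},m^{\alpha,\beta}_{t},n^{\alpha}_{t},\Phi^{\alpha}_{0,t}$ and of their $\eta$-derivatives are controlled by Lemma \ref{lem:estimates} together with the derivative bounds \eqref{eq:bound_der_n}--\eqref{eq:bound_der_n_higher} and \eqref{eq:der_charact_Y} established in the proof of Proposition \ref{prop_verification_bis}. Combining these with \eqref{eq:estimate_quad_form}--\eqref{eq:estim_n} (taking $\delta$ small) and repeated use of the Gaussian estimate \eqref{eq:guassian_estimate} to absorb polynomial prefactors $|\eta|^{k}$ into the exponential, yields an intermediate bound of the form
\[
|\Psi_{i,t}(\alpha,\beta)-\Psi_{i,t}(\alpha',\beta')| \leq \frac{\kappa}{t^{r_{0}}}\int_{\R}\bigl(|\hat{\bar a}(\eta)|+|\hat{\bar b}(\eta)|\bigr)\ee^{-\frac{t\eta^{2}}{4\kappa}}\dd\eta \cdot \int_{0}^{t}\bigl(|\alpha_{s}-\alpha'_{s}|+|\beta_{s}-\beta'_{s}|\bigr)\dd s,
\]
for some fixed exponent $r_{0}\in[0,1)$.

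At this stage the proof of Theorem \ref{lem:contraction_bis} would simply invoke the $L^{1}$ hypothesis on $\hat{\bar a},\hat{\bar b}$. Instead, I apply the one-dimensional factorisation from the proof of Theorem \ref{lem:contraction_d1}: write $|\hat{\bar a}(\eta)|\ee^{-t\eta^{2}/(4\kappa)}=\bigl(|\hat{\bar a}(\eta)|/|\eta|^{2/3}\bigr)\bigl(|\eta|^{2/3}\ee^{-t\eta^{2}/(4\kappa)}\bigr)$, apply \eqref{eq:guassian_estimate} with $\gamma=2/3$, and close with Cauchy--Schwarz. This replaces $\|\hat{\bar a}\|_{L^{1}},\|\hat{\bar b}\|_{L^{1}}$ by $\|\hat{\bar a}\|_{L^{2}},\|\hat{\bar b}\|_{L^{2}}$ at the cost of an extra factor of order $t^{-1/3}$, yielding $|\Psi_{i,t}(\alpha,\beta)-\Psi_{i,t}(\alpha',\beta')|\leq\kappa t^{-r}\int_{0}^{t}\bigl(|\alpha_{s}-\alpha'_{s}|+|\beta_{s}-\beta'_{s}|\bigr)\dd s$ with $r<1$. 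The standard $\lambda$-weighted norm argument (last display of the proof of Theorem \ref{lem:contraction_d1}) then delivers the contraction constant by taking $\lambda$ sufficiently large.

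The main obstacle is the careful bookkeeping of the polynomial degree in $|\eta|$ produced by the two $\eta$-derivatives in $\Lc$. In one dimension with $q=2$ this degree stays controllable because $\partial_{\eta}^{2}g^{\alpha,\beta}_{t}=-C^{\alpha}_{t}+(\partial^{2}n^{\alpha}_{t})(-\eta)$ is bounded in $|\eta|$ by \eqref{eq:bound_der_n_higher}, $\partial_{\eta}g^{\alpha,\beta}_{t}$ grows only linearly in $|\eta|$, and the Lipschitz estimates of Lemma \ref{lem:estimates} carry compensating $t$-factors for the $\alpha$-differences. One must verify term by term that the combined final exponent $r=r_{0}+1/3$ remains strictly below $1$, in particular for the telescoping pieces stemming from $\beta$-only variations which do not benefit from an extra $t$-factor in Lemma \ref{lem:estimates}. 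This is where the concrete technical work lies.
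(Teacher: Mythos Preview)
Your proposal is correct and matches the paper's approach exactly: the paper simply states that the proof of Theorem \ref{lem:contraction_bis_d1} is identical to that of Theorem \ref{lem:contraction_d1}, i.e., one starts from the intermediate estimate \eqref{eq:estim_Psi_bis} obtained in the proof of Theorem \ref{lem:contraction_bis} (which gives $r_{0}=1/2$) and then applies the one-dimensional $L^{2}$--Cauchy--Schwarz trick with $\gamma=2/3$, yielding the final exponent $r=5/6<1$ and concluding via the $\lambda$-weighted norm argument. Your bookkeeping concern about the $\beta$-only terms is already implicitly handled because the derivative terms in $\Lc$ obey the same structure as in \eqref{eq:estim_Psi_bis}, so $r_{0}=1/2$ persists and $r=5/6$ is safe.
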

\begin{corollary}\label{cor:fixed_point_bis}
Under the assumptions of Theorem \ref{lem:contraction_bis} (or Theorem \ref{lem:contraction_bis_d1}), the same conclusion as of Corollary \ref{cor:fixed_point} holds.
\end{corollary}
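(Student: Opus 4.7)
The plan is to mimic the proof of Corollary \ref{cor:fixed_point} step by step, checking that every ingredient survives the weakening of the integrability hypothesis on $a,b$ to mere boundedness. The Banach fixed-point theorem on the closed ball
\eqstar{
B = \big\{ (\alpha,\beta)\in L^{\infty}([0,T]:\mathcal{M}^{d\times d}\times \R^d) :  \|(\alpha,\beta)\|_{T,0}\leq \| a \|_{L^{\infty}}+ \| b \|_{L^{\infty}}  \big\}
}
(which is complete with respect to each of the equivalent norms $\|\cdot\|_{T,\lambda}$) will do the job, once we know that (i) $\Psi(B)\subset B$ and (ii) $\Psi$ is a $c$-contraction on $B$ for some $c<1$.

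For (i), I would observe that Assumption \ref{assump:coeff_bis} already gives $a,b\in L^{\infty}(\mathbb{R}^d)$, and that the ellipticity Assumption \ref{ellipticity} together with the analytic form \eqref{eq:ste13} of the characteristic function guarantees, via the Gaussian decay provided by \eqref{eq:estimate_quad_form} (combined with \eqref{eq:estim_n} applied with $\delta$ small), that $\hat{\mu}_{X^{(\alpha,\beta)}_t}\in L^{1}(\mathbb{R}^d)$ and hence that $X^{(\alpha,\beta)}_t$ admits a density for every $t\in(0,T]$ and every $(\alpha,\beta)\in L^{\infty}$. Consequently
\eqstar{
|\Psi_{1,t}(\alpha,\beta)|\leq \|a\|_{L^{\infty}}, \qquad |\Psi_{2,t}(\alpha,\beta)|\leq \|b\|_{L^{\infty}},
}
which yields $\|\Psi(\alpha,\beta)\|_{T,0}\leq \|a\|_{L^{\infty}}+\|b\|_{L^{\infty}}$, i.e.\ $\Psi(\alpha,\beta)\in B$. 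For (ii), choose $c<1$ in Theorem \ref{lem:contraction_bis} (resp.\ Theorem \ref{lem:contraction_bis_d1} when $d=1$) and take the corresponding $\lambda>0$; this gives the required contraction on $B$ in the norm $\|\cdot\|_{T,\lambda}$. Banach's theorem then produces a unique fixed point $(\bar\alpha,\bar\beta)\in B$. Uniqueness inside the larger space $L^{\infty}([0,T]:\mathcal{M}^{d\times d}\times\mathbb{R}^{d})$ follows from the same observation used for (i): any $L^{\infty}$ solution $(\alpha,\beta)$ automatically satisfies $(\alpha,\beta)=\Psi(\alpha,\beta)\in B$, so it coincides with $(\bar\alpha,\bar\beta)$.

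The remaining issue is the continuity of $t\mapsto(\bar\alpha_t,\bar\beta_t)$ on $(0,T]$, and this is the step that genuinely needs the damped representation. I would prove the analogue of Remark \ref{rem:continuity_Psi} by applying dominated convergence to the Fourier representations \eqref{eq:fourier_rep1_bis}--\eqref{eq:fourier_rep2_bis}. Pointwise continuity of the integrand in $t$ follows from the time-continuity estimates of Lemma \ref{lemm:estimates_ter} together with the smoothness in $t$ of the derivatives $\partial_{\eta_j}^m n^{\alpha}_t$ and $\partial_{\eta_j}^m \hat{\mu}_Y((\Phi^{\alpha}_{0,t})^\top\eta)$ produced in the proof of Proposition \ref{prop_verification_bis}. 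A uniform integrable majorant comes from combining the bounds \eqref{eq:bound_der_n}--\eqref{eq:bound_der_n_higher}--\eqref{eq:der_charact_Y} (which are polynomial in $|\eta|$ of degree at most $q$) with the Gaussian factor $\exp(-\tfrac12\langle\eta,C^{\alpha}_t\eta\rangle)$ controlled from below via \eqref{eq:estimate_quad_form}, and finally absorbing the polynomial factors into $e^{-\rho t|\eta|^2/4}$ through the standard estimate \eqref{eq:guassian_estimate}; the assumed $L^{1}$-integrability of $\hat{\bar a},\hat{\bar b}$ (or $L^{2}$-integrability when $d=1$, together with a Cauchy--Schwarz step as in the proof of Theorem \ref{lem:contraction_bis_d1}) then closes the argument.

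The main obstacle I foresee is not (i) or (ii), which are essentially verbatim copies of the $L^{1}$-coefficient case, but rather the continuity step: one must make sure that the differential operator $\Lc$ acting inside the integrand in \eqref{eq:fourier_rep1_bis}--\eqref{eq:fourier_rep2_bis} produces terms whose $t$-dependence is controlled uniformly in $\eta$ by an $L^{1}$-function. This is exactly where Assumption \ref{ass:levy_bis} (moments of order $q+1$ for $\nu$ and $Y$) is used, and no additional hypothesis beyond those of Theorem \ref{lem:contraction_bis} is required.
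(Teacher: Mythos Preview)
Your proposal is correct and follows the same approach as the paper, which simply declares the proof ``identical to the proof of Corollary \ref{cor:fixed_point}''. Your steps (i), (ii), and the Banach fixed-point argument match the paper's proof verbatim, and your elaboration of the continuity step---dominated convergence applied to the damped Fourier representation of Proposition \ref{prop_verification_bis} in place of Proposition \ref{prop_verification}---merely makes explicit what the paper leaves implicit in the word ``identical''.
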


The proofs of Theorem \ref{lem:contraction_bis_d1} and Corollary \ref{cor:fixed_point_bis} are identical to the proofs of their counterparts Theorem \ref{lem:contraction_d1} and Corollary \ref{cor:fixed_point}, respectively. The proof of Theorem \ref{lem:contraction_bis} is preceded by the following lemma on a priori estimates whose proof is relegated to Appendix \ref{app:estimates_apriori}.
\begin{lemma}\label{lemm:estim_der_n}
Suppose that Assumption \ref{assump:coeff_bis} and \ref{ass:levy_bis} hold. For any $T>0$, and $\alpha,\alpha'\in L^{\infty}([0,T]:\mathcal{M}^{d\times d})$ with $\|\alpha\|_{T,0},\|\alpha'\|_{T,0}\leq \| a \|_{L^{\infty}(\R^d)}$, it holds that
\begin{align}
\label{eq:ste104quat_bis}
& \big|\partial^{m}_{\eta_j}  n^{\alpha}_t(\eta) - \partial^{m}_{\eta_j}  n^{\alpha'}_t(\eta)  \big| \leq   \kappa\, t (|\eta|+1) \int_0^t | \alpha_u - \alpha'_u | \dd u ,\qquad j=1,\cdots,d,\quad  m=1,\cdots, q,
\end{align}
for any  $\eta\in\R^d, 0\leq  t\leq T$,
where $\kappa$ is a positive constant that depends only on $T,\| a \|_{L^{\infty}}$, and $\nu(\dd y)$. 
\end{lemma}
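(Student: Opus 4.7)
The plan is to exploit the explicit integral representations \eqref{eq:der_n_first}--\eqref{eq:der_n_m} of $\partial^{m}_{\eta_j} n^{\alpha}_t(\eta)$, and to estimate the integrand pointwise by factoring out the dependence on $\alpha - \alpha'$ via two building-block estimates. Specifically, I will use the fundamental-matrix sensitivity bound (the exact analogue of \eqref{eq:ste104_quin} applied to $y$ rather than $\eta$)
\eqstar{
\bigl|\Phi^{\alpha}_{s,t}\,y - \Phi^{\alpha'}_{s,t}\,y\bigr| \leq \kappa\,|y|\int_0^t |\alpha_u - \alpha'_u|\,\dd u,\qquad 0\leq s\leq t\leq T,
}
which follows from the ODE \eqref{eq:ode} combined with Gronwall together with \eqref{eq:est_1}, in the same way as in the proof of Lemma \ref{lem:estimates}; and the trivial Lipschitz bound $|\ee^{ix} - \ee^{iy}| \leq |x-y|$, which yields
\eqstar{
\bigl| \ee^{i\langle \eta,\Phi^{\alpha}_{s,t} y\rangle} - \ee^{i\langle \eta,\Phi^{\alpha'}_{s,t} y\rangle} \bigr| \leq \kappa\,|\eta|\,|y|\int_0^t |\alpha_u - \alpha'_u|\,\dd u.
}

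First I would treat the case $m \geq 2$, which is the simpler one. Writing the difference $\partial^{m}_{\eta_j} n^{\alpha}_t(\eta) - \partial^{m}_{\eta_j} n^{\alpha'}_t(\eta)$ via the formula \eqref{eq:der_n_m} and splitting through the telescopic identity
\eqstar{
A^{m} \ee^{ix} - B^{m} \ee^{iy} = (A^m - B^m)\ee^{ix} + B^m(\ee^{ix} - \ee^{iy}),
}
together with the algebraic bound $|A^m - B^m|\leq m\,\max(|A|,|B|)^{m-1}|A-B|$, gives a pointwise bound of the form $\kappa\,(1+|\eta|\,|y|)\,|y|^m \int_0^t |\alpha_u - \alpha'_u|\,\dd u$, where $\kappa$ also absorbs the factor $\ee^{mT\|a\|_{L^\infty}}$ coming from \eqref{eq:est_1}. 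Integrating against $\nu(\dd y)\,\dd s$ and splitting the $y$-integral at $|y|=1$, the contribution near the origin is controlled by $\int_{|y|<1}|y|^2 \nu(\dd y) < \infty$ (which holds for any L\'evy measure since $m \geq 2$), while the tail contribution is bounded using $\int_{|y|\geq 1}|y|^{m+1}\nu(\dd y) \leq \int_{|y|\geq 1}|y|^{q+1}\nu(\dd y) < \infty$ from Assumption \ref{ass:levy_bis}. This produces the desired estimate $\kappa\, t(|\eta|+1)\int_0^t|\alpha_u -\alpha'_u|\,\dd u$.

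The case $m=1$ is where most of the care is needed, because the integrand in \eqref{eq:der_n_first} has a different structure in $\{|y|<1\}$ and $\{|y|\geq 1\}$. On $\{|y|\geq 1\}$ the argument is identical to the one above with $m=1$, requiring $\int_{|y|\geq 1}|y|^{2}\nu(\dd y) < \infty$, which is granted by Assumption \ref{ass:levy_bis}. On $\{|y|<1\}$ the integrand is $(\Phi^{\alpha}_{s,t} y)_j (\ee^{i\langle\eta,\Phi^{\alpha}_{s,t}y\rangle}-1)$; I would use $|\ee^{ix}-1|\leq |x|$ together with the two building-block estimates above to bound the difference between the $\alpha$- and $\alpha'$-versions by $\kappa\,|\eta|\,|y|^2\int_0^t|\alpha_u-\alpha'_u|\,\dd u$, which is integrable against $\nu(\dd y)$ on $\{|y|<1\}$ thanks to the standard $\int_{|y|<1}|y|^2 \nu(\dd y)<\infty$. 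Summing the two regions, integrating in $s$ over $[0,t]$, and combining with the $m\geq 2$ bound finishes the proof.

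The main obstacle is not any single estimate but the bookkeeping needed to handle $m=1$: the truncation at $|y|=1$ in the L\'evy--Khintchine representation forces a case split, and one must make sure the extra factor of $|y|$ obtained from the Lipschitz estimates on the exponential is absorbed by the $|y|^2$-integrability near the origin, while on the tails it is the reinforced moment assumption \eqref{eq:ste52_bis} that does the work. All other terms depend polynomially on $|\eta|$ of degree at most $1$, consistently with the $(|\eta|+1)$ factor in the conclusion.
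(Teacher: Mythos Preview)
Your proposal is correct and follows essentially the same route as the paper: both use the explicit formulas \eqref{eq:der_n_first}--\eqref{eq:der_n_m}, the telescopic split $A\,\ee^{ix}-B\,\ee^{iy}=(A-B)\ee^{ix}+B(\ee^{ix}-\ee^{iy})$, the sensitivity bound \eqref{eq:ste104_quin} on $\Phi^{\alpha}-\Phi^{\alpha'}$, and the Lipschitz estimate for the complex exponential, with the $|y|^2$-integrability near the origin and Assumption \ref{ass:levy_bis} on the tails. The only cosmetic difference is that the paper writes the $m=1$ case via the unified integrand $(\Phi y)_j(\ee^{i\langle\eta,\Phi y\rangle}-\mathbf{1}_{\{|y|<1\}})$ and omits the $m\geq 2$ details, whereas you split the regions $\{|y|<1\}$ and $\{|y|\geq 1\}$ explicitly and spell out the higher-order case.
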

\begin{proof}[Poof of Theorem \ref{lem:contraction_bis}.]
Throughout this proof, we denote by $\kappa$ any positive constant that depends at most on  $T$, $\theta$, $\| a \|_{L^{\infty}}$, $\| b \|_{L^{\infty}}$,$\| \hat{\bar{a}} \|_{L^1}$, $\| \hat{\bar{b}} \|_{L^1}$, $
\nu(\dd y)$, $\Eb[|Y|^q]$, and dimension $d$. It is enough to prove that 
\begin{align}
&|\Psi_{1,t}(\alpha,\beta) -  \Psi_{1,t}(\alpha',\beta')|+|\Psi_{2,t}(\alpha,\beta) -  \Psi_{2,t}(\alpha',\beta')|\\
&\leq \frac{\kappa}{\sqrt{t}} \, \bigg(\int_0^t |\a_s -\a'_s| \dd s + \int_0^t |\b_s -\b'_s| \dd s\bigg) \biggl(\frac{1}{(2\pi)^d}\int_{\R^d} \big(|\hat{\bar{a}}(\eta)| +|\hat{\bar{b}}(\eta)| \big) \dd \eta \biggr),
\label{eq:estim_Psi_bis}
\end{align}
for any $0\leq t\leq T$. With \eqref{eq:estim_Psi_bis} at hand, the proof can be concluded exactly like the proof of Theorem \ref{lem:contraction}.

By Proposition \ref{prop_verification_bis} together with triangular inequality, we obtain that 
\begin{equation}
|\Psi_{1,t}(\alpha,\beta) -  \Psi_{1,t}(\alpha',\beta')| \leq  \frac{1}{(2\pi)^d} \bigg(\sum_{i=1}^5 I_i + \sum_{i=1}^5  J_{i} \bigg),
\end{equation}
where  
\begin{align}
J_1 &=     \int_{\R^d} |\hat{\bar{a}}(\eta)| \left| \partial_{\eta}^q \Big( \Big( \ee^{-\frac{1}{2}\langle \eta, C^{\alpha}_t\eta\rangle} - \ee^{-\frac{1}{2}\langle \eta, C^{\alpha'}_t\eta\rangle } \Big)  \ee^{- i\,\langle  \eta, m^{\alpha,\beta}_t\rangle +   n^{\alpha}_t(-\eta) }  \hat{\mu}_Y \big(\!-\!(\Phi_{0,t}^{\alpha})^\top \eta\big)\Big)\right| \dd \eta     ,\\
J_2 &=   \int_{\R^d} |\hat{\bar{a}}(\eta)| \left|  \partial_{\eta}^q\Big( \ee^{ -\frac{1}{2} \langle \eta, C^{\alpha'}_t \eta \rangle + n^{\alpha}_t (-\eta) }\Big( \ee^{- i\,\langle  \eta, m^{\alpha,\beta}_t\rangle } - \ee^{- i\,\langle  \eta, m^{\alpha',\beta}_t\rangle } \Big)  \hat{\mu}_Y \big(\!-\!(\Phi_{0,t}^{\alpha})^\top \eta\big)\Big)\right| \dd \eta     ,\\
J_3 &=   \int_{\R^d} |\hat{\bar{a}}(\eta)| \left|  \partial_{\eta}^q\Big( \ee^{ -\frac{1}{2} \langle \eta, C^{\alpha'}_t \eta \rangle + n^{\alpha}_t (-\eta) }\Big( \ee^{- i\,\langle  \eta, m^{\alpha',\beta}_t\rangle } - \ee^{- i\,\langle  \eta, m^{\alpha',\beta'}_t\rangle } \Big) \hat{\mu}_Y \big(\!-\!(\Phi_{0,t}^{\alpha})^\top \eta\big)\Big)\right| \dd \eta     ,\\
J_4 &=    \int_{\R^d} |\hat{\bar{a}}(\eta)| \left|  \partial_{\eta}^q\Big( \ee^{ -\frac{1}{2} \langle \eta, C^{\alpha'}_t \eta \rangle  - i\,\langle  \eta, m^{\alpha',\beta'}_t\rangle } \Big( \ee^{n^{\alpha}_t (-\eta)  } - \ee^{ n^{\alpha'}_t (-\eta) } \Big)  \hat{\mu}_Y \big(\!-\!(\Phi_{0,t}^{\alpha})^\top \eta\big)\Big)\right| \dd \eta     ,\\
J_5 &=    \int_{\R^d} |\hat{\bar{a}}(\eta)| \left|  \partial_{\eta}^q\Big( \ee^{ -\frac{1}{2} \langle \eta, C^{\alpha'}_t \eta \rangle  - i\,\langle  \eta, m^{\alpha',\beta'}_t\rangle + n^{\alpha'}_t (-\eta) } \Big( \hat{\mu}_Y  \big(\!-\!(\Phi_{0,t}^{\alpha})^\top \eta\big) - \hat{\mu}_Y  \big(\!-\!(\Phi_{0,t}^{\alpha'})^\top \eta\big) \Big) \Big)\right| \dd \eta.
\end{align}
The terms $I_i$ are like those in the proof of Theorem \ref{lem:contraction} and are obtained by replacing $\hat{a}$ with $\hat{\bar{a}}$, and can be bounded in the same way. Analogous bounds for $J_i$ can be obtained by repeatedly applying the estimates of Lemma \ref{lemm:estimates_bis}, \ref{lem:estimates} and \ref{lemm:estim_der_n}, and the estimates \eqref{eq:bound_der_n}-\eqref{eq:bound_der_n_higher}-\eqref{eq:der_charact_Y}. Eventually, applying \eqref{eq:guassian_estimate} yields \eqref{eq:estim_Psi_bis}. We omit the details to avoid repeating the arguments from the proof of Theorem  \ref{lem:contraction}. 
\end{proof}
 
\section{Discretized Picard iteration scheme}
\label{section: discrete Picard}
In principle, we can compute the unique solution $(\bar{\a},\bar{\beta})$ to the MK-V fixed-point equation \eqref{eq:ste15} via a Picard iteration scheme. The result of Corollary \ref{cor:fixed_point} (or Corollary \ref{cor:fixed_point_bis}) together with the Fourier representation \eqref{eq:fourier_rep1}-\eqref{eq:fourier_rep2} (or \eqref{eq:fourier_rep1_bis}-\eqref{eq:fourier_rep2_bis}) provide us with a convergent scheme. However, for a given initial point $\gamma^0 = (\alpha^0,\beta^0) \in L^{\infty}([0,T]:\mathcal{M}^{d\times d}\times \R^d)$, the approximating sequence 
\begin{equation}\label{eq:continuous_picard}
\gamma^{m}= (\alpha^m,\beta^m):= \Psi(\gamma^{m-1})=\Psi(\alpha^{m-1},\beta^{m-1}),\quad m\in\mathbb{N}, 
\end{equation}
cannot be computed explicitly at each step. Although, we do not consider here the effect of the error introduced by numerically approximating the space integral in \eqref{eq:fourier_rep1}-\eqref{eq:fourier_rep2} (or \eqref{eq:fourier_rep1_bis}-\eqref{eq:fourier_rep2_bis}), we do analyse the impact of time-discretization of functions $\Phi^{\alpha,\beta}_{s,t}$, $C^{\alpha}_t$, $m^{\alpha,\beta}_t$ on the convergence rate of the numerical scheme. 

For any $\g=(\a,\b)\in L^{\infty}([0,T]:\mathcal{M}^{d\times d}\times \R^d)$ and $n\in\mathbb{N}$, we define the piece-wise constant function $\g^{(n)}$ as
$$
\g^{(n)}_t:= \sum_{i=1}^{n} \g_{t_i} {\bf 1}_{[t_{i-1},t_{i}[}(t), \quad t\in[0,T], \quad \text{where } t_i: = \frac{T}{n} i .
$$
We also set operator $\Psi^{(n)}$ from $L^{\infty}([0,T]:\mathcal{M}^{d\times d}\times \R^d)$ onto itself acting as
$$
\Psi^{(n)}(\gamma) := \big(\Psi(\gamma)\big)^{(n)}, \quad \gamma \in L^{\infty}([0,T]:\mathcal{M}^{d\times d}\times \R^d).
$$
The map $\Psi^{(n)}$ has to be interpreted as a step-wise approximation of $\Psi$, and is the map we compute in our Picard iteration scheme. The idea is to repeatedly apply operator $\Psi^{(n)}$ instead of $\Psi$, in order to take advantage of the fact that  
 $\Phi^{\alpha,\beta}_{s,t}$, $C^{\alpha}_t$, $m^{\alpha,\beta}_t$ are explicitly computable if $\a$ and $\b$ are step-functions. Precisely, 
for a given initial step-function $\gamma^{0,n} \in L^{\infty}([0,T]:\mathcal{M}^{d\times d}\times \R^d)$, the approximating sequence 
\begin{equation}\label{eq:discrete_picard}
 \gamma^{m,n}=(\alpha^{m,n},\beta^{m,n}) := \Psi^{(n)}(\gamma^{m-1,n})= \Psi^{(n)}(\alpha^{m-1,n},\beta^{m-1,n}),\quad m\in\mathbb{N}, 
\end{equation}
can be computed explicitly at each step, up to computing a space integral on $\R^d.$ This is due to the fact that the solution $\Phi^{\alpha}$ to ODE \eqref{eq:ode} can be computed explicitly in terms of matrix exponentials whenever $\alpha$ is a piece-wise constant.
Hereafter, we assume that the continuous and discretized Picard iterations, defined by \eqref{eq:continuous_picard} and \eqref{eq:discrete_picard} respectively, are both initialized by the same constant function, i.e.,
\begin{equation}\label{eq:initialization}
\gamma^{0}_t=\gamma^{0,n}_t \equiv {\gamma}_{0}=(\a_0,\b_0)\in \mathcal{M}^{d\times d}\times \R^d, \quad t\in [0,T].
\end{equation}
Note that ${\g}^{m,n} \neq ({\g}^{m})^{(n)}$ which means that ${\g}^{m,n}$ is not the discretized version of ${\g}^{m}.$ 

In order to be able to control the error introduced by the time-discretization, we must be able to study the regularity of the function $t\mapsto \Psi_t(\gamma) $ on $[0,T]$. For this purpose, we need to introduce some further assumptions on coefficients $a,b$ and/or on distribution of the initial datum $Y$, which are needed to ensure Lipschitz continuity of $\Psi_t(\gamma)$ near $t=0$.

\begin{assumption}
\label{ass:additional_assumption}
$\hat{a}, \hat{b}$ and $\hat{\mu}_Y$ satisfy the following conditions
\begin{align}
\label{eq:additional_assumption}
\int_{\R^d}  \big(  |\hat{a}(\eta)|  + |\hat{b}(\eta)|    \big)|\hat{\mu}_Y (\eta)|   |\eta|^{2}   \dd \eta <\infty,     &&     \int_{\R^d}  \big(  |\hat{a}(\eta)|  + |\hat{b}(\eta)|  \big)  |\eta|  \dd \eta <\infty. 
\end{align}
\end{assumption}
\begin{remark}
Note that Assumption \ref{ass:additional_assumption} is related to regularity of the functions $a$, $b$ and of the distribution of the initial datum $Y$. For instance, the second condition in \eqref{eq:additional_assumption} is equivalent to requiring that $ \hat{a}(\eta)  |\eta|$ and $ \hat{b}(\eta) |\eta|$ belong to $L^1(\R^d)$, which implies that $a,b$ are continuously differentiable. Also, the first condition in \eqref{eq:additional_assumption} is satisfied if either the functions $\hat{a}(\eta)  |\eta|^2$, $ \hat{b}(\eta) |\eta|^2$, or the function $\hat{\mu}_Y (\eta) |\eta|^2$ belong to $L^1(\R^d)$, which in turn is satisfied if the coefficients $a,b$, or density of the initial datum $Y$ are $d+3$ times continuously differentiable with derivatives in $L^1(\R^d)$. Alternatively, the first condition in \eqref{eq:additional_assumption} is also ensured if $\hat{a}(\eta)  |\eta|$, $ \hat{b}(\eta) |\eta|$ and $\hat{\mu}_Y (\eta) |\eta|$ all belong to $L^2(\R^d)$, which in turn is ensured by requiring $a,b$ and the density of $Y$ belong to the first-order Sobolev space $H^1(\R^d)$.
All these conditions seem rather strong, but we claim that they are not necessary in many particular cases. Once again we emphasise that this is the cost we incur for carrying out the analysis in the Fourier space, which enables us to deal with general L\'evy measures.
\end{remark}
When working under the assumptions of Section \ref{sec:damping} ($a,b$ only in $L^{\infty}(\R^d)$), we will need instead the following additional assumptions.
\begin{assumption}\label{ass:additional_assumption_bis}
$\hat{\bar{a}},\hat{\bar{b}}$ and $\hat{\mu}_Y$ satisfy the following conditions\begin{align}\label{eq:additional_assumption_bis}
      \int_{\R^d}  \big(  |\hat{\bar{a}}(\eta)|  + |\hat{\bar{b}}(\eta)|    \big)|\hat{\mu}_Y (\eta)|   |\eta|^2   \dd \eta <\infty,     &&     \int_{\R^d}  \big(  |\hat{\bar{a}}(\eta)|  + |\hat{\bar{b}}(\eta)|    \big)  |\eta|   \dd \eta <\infty    ,
   \end{align}
   where $\bar{a}$ and $\bar{b}$ are as defined in \eqref{eq:ste28}.
   \end{assumption}
\begin{remark}
Note that Assumption \ref{ass:additional_assumption} and \ref{ass:additional_assumption_bis} imply Assumption \ref{assump:coeff_B} and \ref{assump:coeff_bis_B}, respectively.
\end{remark}
We now state the two main results of this section. 

\begin{theorem}
\label{th:main}
Let $(\gamma^{m,n})_{m,n\in\mathbb{N}}$ be the sequence as defined by \eqref{eq:discrete_picard}-\eqref{eq:initialization}. 
Suppose that  Assumption \ref{assump:coeff}, \ref{ellipticity}, \ref{ass:levy} and \ref{ass:additional_assumption} also hold. For any $T>0,{\g}_{0}=\big({\a}_{0},{\b}_{0}\big)\in  \mathcal{M}^{d\times d}\times \R^d$ with $\max \{ |{\a}_{0}| , |{\b}_{0} |  \} \leq \| a \|_{L^{\infty}
}+\| b \|_{L^{\infty}
},$ there exist $\lambda,\kappa>0$, only dependent on $T$, $\theta$, 
$\nu(\dd y)$, $Y$ and the coefficients $a,b$, such that
\begin{equation}\label{eq:convergence_scheme}
\|  \overline{\g} -{\g}^{m,n}  \|_{T,\lambda} \leq \kappa \Big(\frac{1}{2^m}+\frac{1}{n} \Big),\qquad n,m \in \mathbb{N},
\end{equation}
where $\overline{\gamma}=(\overline{\a}$, $\overline{\b})$ is the unique solution in $L^{\infty}([0,T]:\mathcal{M}^{d\times d}\times \R^d)$ to McKean-Vlasov fixed-point equation \eqref{eq:ste15}.
\end{theorem}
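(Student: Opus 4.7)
The natural decomposition is to interpose the continuous Picard iterates $\gamma^m := \Psi(\gamma^{m-1})$ initialized at the same point $\gamma_0$, and write
\eqstar{
\|\overline{\gamma} - \gamma^{m,n}\|_{T,\lambda} \leq \|\overline{\gamma} - \gamma^m\|_{T,\lambda} + \|\gamma^m - \gamma^{m,n}\|_{T,\lambda}.
}
The first summand is the pure fixed-point error, the second is the error accumulated from replacing $\Psi$ by $\Psi^{(n)}$ at each step. The plan is to fix $\lambda$ once and for all so that Theorem \ref{lem:contraction} gives the contraction constant $c=1/2$ on the invariant set $B$ (Corollary \ref{cor:fixed_point}), and then to bound each summand separately.

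For the first summand, note that both $\overline{\gamma}$ and $\gamma^0$ belong to $B$ under the assumption $\max\{|\alpha_0|,|\beta_0|\}\leq \|a\|_{L^\infty}+\|b\|_{L^\infty}$, and so do all subsequent continuous Picard iterates. The standard Banach argument then yields
\eqstar{
\|\overline{\gamma} - \gamma^m\|_{T,\lambda} \leq 2^{-m}\|\overline{\gamma} - \gamma^0\|_{T,\lambda} \leq \kappa\, 2^{-m},
}
the last inequality following from the uniform boundedness of elements of $B$.

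For the second summand I would proceed by induction. Writing
\eqstar{
\gamma^m - \gamma^{m,n} = \bigl[\Psi(\gamma^{m-1}) - \Psi(\gamma^{m-1,n})\bigr] + \bigl[\Psi(\gamma^{m-1,n}) - (\Psi(\gamma^{m-1,n}))^{(n)}\bigr],
}
the first bracket is controlled by the contraction property by $\tfrac12 \|\gamma^{m-1} - \gamma^{m-1,n}\|_{T,\lambda}$, while the second is the time-discretization error of $\Psi$ evaluated at a point of $B$. The core estimate I need is the uniform Lipschitz bound
\eqlnostar{eq:proposal_lip}{
\sup_{\delta\in B}\ \sup_{0\leq t\leq t'\leq T} \frac{|\Psi_t(\delta) - \Psi_{t'}(\delta)|}{t'-t}\leq \kappa,
}
which, for a step-size $T/n$, gives $\|\Psi(\delta)-(\Psi(\delta))^{(n)}\|_{T,\lambda}\leq \kappa/n$ uniformly in $\delta\in B$. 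Plugging this into the induction with $e_m := \|\gamma^m - \gamma^{m,n}\|_{T,\lambda}$, $e_0=0$, gives
\eqstar{
e_m \leq \tfrac12\, e_{m-1} + \tfrac{\kappa}{n} \;\Longrightarrow\; e_m \leq \tfrac{2\kappa}{n}.
}
Combining the two bounds yields \eqref{eq:convergence_scheme}.

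The main obstacle is establishing \eqref{eq:proposal_lip}, and this is precisely the role of Assumption \ref{ass:additional_assumption}. Using the Fourier representation \eqref{eq:fourier_rep1}--\eqref{eq:fourier_rep2} of Proposition \ref{prop_verification}, $\Psi_t(\delta) - \Psi_{t'}(\delta)$ splits, via a telescoping argument, into four integrals in which the factors $\ee^{-\tfrac12\langle\eta,C^\alpha_t\eta\rangle}$, $\ee^{-i\langle\eta,m^{\alpha,\beta}_t\rangle}$, $\ee^{n^\alpha_t(-\eta)}$ and $\hat{\mu}_Y(-(\Phi^\alpha_{0,t})^\top \eta)$ are replaced by their counterparts at $t'$. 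Combining the time-regularity bounds \eqref{eq:est_1_ter}--\eqref{eq:estim_n_ter} of Lemma \ref{lemm:estimates_ter} with the elementary inequality $|\ee^z-\ee^{z'}|\leq |z-z'|\ee^{\max(\operatorname{Re}z,\operatorname{Re}z')}$, together with \eqref{eq:estim_n} and \eqref{eq:estimate_quad_form} for the uniform control of the exponential envelopes, each of the four integrals is bounded by $(t'-t)$ times an integral of $|\hat{a}(\eta)|+|\hat{b}(\eta)|$ (and, for the $\hat{\mu}_Y$ term, of $(|\hat{a}|+|\hat{b}|)|\hat\mu_Y|$) against polynomial factors in $|\eta|$ of order at most $2$. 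The delicate point is that, in contrast with the proof of Theorem \ref{lem:contraction}, here we cannot afford to lose a factor of $\sqrt t$ from the Gaussian decay in order to retain Lipschitz (rather than H\"older) continuity up to $t=0$; this is why the bare integrability conditions \eqref{eq:additional_assumption} are imposed, rather than being hidden inside a Gaussian estimate of the form \eqref{eq:guassian_estimate}. Under these conditions the right-hand side of \eqref{eq:proposal_lip} is uniformly bounded in $\delta\in B$, and the proof is complete.
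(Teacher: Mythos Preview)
Your proposal is correct and follows essentially the same route as the paper: the same triangular decomposition through the continuous Picard iterates, the same induction $e_m\le\tfrac12 e_{m-1}+\kappa/n$ for the discretization error (this is Lemma~\ref{lem:conv_n}, via Lemma~\ref{lem:discrete map minus conts map}), and the same time-Lipschitz bound \eqref{eq:proposal_lip} (this is Lemma~\ref{lem:Lipsch_time}) obtained from the four-term telescoping of the Fourier representation together with Lemma~\ref{lemm:estimates_ter}. One small point worth making explicit in your sketch of \eqref{eq:proposal_lip}: the factor $|\hat\mu_Y(-(\Phi^{\alpha}_{0,t})^\top\eta)|$ does not match the form of Assumption~\ref{ass:additional_assumption} directly, and the paper handles this via the intermediate bound $|\hat\mu_Y(-(\Phi^{\alpha}_{0,t})^\top\eta)|\le|\hat\mu_Y(\eta)|+\kappa t|\eta|$ (the extra $t|\eta|$ being absorbed by the Gaussian envelope without producing a singular factor), so that the first integral in \eqref{eq:additional_assumption} is what controls the $|\eta|^2$ contribution.
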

In analogy with the results of Section \ref{sec:damping}, for coefficients $a,b$ that are not in $L^1(\R^d)$ we have the following extension. 
\begin{theorem}\label{th:main_bis}
Under Assumption \ref{assump:coeff_bis}, \ref{ellipticity}, \ref{ass:levy_bis} and \ref{ass:additional_assumption_bis}, the same result as of Theorem \ref{th:main} holds.
\end{theorem}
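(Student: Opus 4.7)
The plan is to mirror the proof of Theorem \ref{th:main}, now replacing the plain Fourier representation \eqref{eq:fourier_rep1}-\eqref{eq:fourier_rep2} by the damped analogue \eqref{eq:fourier_rep1_bis}-\eqref{eq:fourier_rep2_bis}, and the contraction estimate of Theorem \ref{lem:contraction} by that of Theorem \ref{lem:contraction_bis}. First I would split the total error via the triangle inequality
\[
\|\overline{\g} - \g^{m,n}\|_{T,\lambda} \leq \|\overline{\g}-\g^m\|_{T,\lambda} + \|\g^m - \g^{m,n}\|_{T,\lambda},
\]
where $\g^m$ denotes the continuous Picard iteration \eqref{eq:continuous_picard} starting from the same constant $\g_0$. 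Choosing $c=1/2$ in Theorem \ref{lem:contraction_bis} fixes a $\lambda$ such that $\Psi$ is a $(1/2)$-contraction on the ball $B$ of \eqref{eq:set_B}; since $\g_0,\overline{\g}\in B$, the first summand is immediately bounded by $\kappa\,2^{-m}$, producing the $1/2^m$ contribution.

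For the second summand I would induct on $m$. Using $\g^{m,n}_t=\Psi_{t_i}(\g^{m-1,n})$ for $t\in[t_{i-1},t_i[$ and adding and subtracting $\Psi_t(\g^{m-1,n})$,
\[
\g^m_t - \g^{m,n}_t = \bigl[\Psi_t(\g^{m-1}) - \Psi_t(\g^{m-1,n})\bigr] + \bigl[\Psi_t(\g^{m-1,n}) - \Psi_{t_i}(\g^{m-1,n})\bigr].
\]
The first bracket is bounded by $(1/2)\,\ee^{\lambda t}\|\g^{m-1}-\g^{m-1,n}\|_{T,\lambda}$ via the contraction property, while the second is the pointwise time-oscillation of $\Psi(\g^{m-1,n})$ over an interval of length $T/n$. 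Multiplying by $\ee^{-\lambda t}$, taking the essential supremum, and unfolding the resulting geometric recursion yields $\|\g^m-\g^{m,n}\|_{T,\lambda}\leq \kappa/n$ uniformly in $m$, provided one has a Lipschitz-in-$t$ bound $|\Psi_{t'}(\gamma)-\Psi_t(\gamma)|\leq C(t'-t)$ valid for every $\gamma\in B$ and $0\leq t \leq t'\leq T$.

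Establishing this Lipschitz estimate in the damped setting is the main obstacle. By Proposition \ref{prop_verification_bis}, the difference $\Psi_{t'}(\gamma)-\Psi_t(\gamma)$ can be written as the integral of $\hat{\bar{a}}$ (or $\hat{\bar{b}}$) against the increment, between $s=t$ and $s=t'$, of
\[
\Lc\bigl(\ee^{-\tfrac12\langle\eta,C^{\alpha}_s\eta\rangle - i\langle\eta,m^{\alpha,\beta}_s\rangle + n^{\alpha}_s(-\eta)}\, \hat{\mu}_Y(-(\Phi^{\alpha}_{0,s})^\top\eta)\bigr).
\]
Expanding $\Lc$ via Leibniz produces a finite sum of products of $\eta$-derivatives (up to order $q$) of the four factors; each factor is Lipschitz in $s$ by Lemma \ref{lemm:estimates_ter} with a constant polynomial in $|\eta|$, and the $\eta$-derivatives of $n^{\alpha}_s$ and $\hat{\mu}_Y$ admit polynomial-in-$|\eta|$ bounds through \eqref{eq:bound_der_n}-\eqref{eq:bound_der_n_higher}-\eqref{eq:der_charact_Y}, which is precisely where the higher moments of $\nu$ and $Y$ from Assumption \ref{ass:levy_bis} enter. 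Combining all bounds and using the Gaussian factor $\ee^{-\frac12\langle\eta,C^{\alpha}_s\eta\rangle}$ together with \eqref{eq:guassian_estimate} to absorb as much polynomial growth as possible, the Lipschitz constant reduces to a finite linear combination of the two integrals in \eqref{eq:additional_assumption_bis}: the terms with quadratic growth in $|\eta|$ that cannot be absorbed uniformly down to $t=0$ must be paired with $|\hat{\mu}_Y|$, whereas terms with at most linear growth are controlled directly by the second condition. The bookkeeping needed to check that every term produced by the Leibniz expansion of $\Lc$ falls into one of these two categories is tedious but straightforward, and mirrors step by step the bounds on $J_1,\ldots,J_5$ in the proof of Theorem \ref{lem:contraction_bis}.
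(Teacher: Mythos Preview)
Your proposal is correct and follows the same route that the paper implicitly intends: the paper simply states that the proof of Theorem \ref{th:main_bis} is analogous to that of Theorem \ref{th:main} and omits it, and your outline reproduces precisely that analogy --- the triangle-inequality split, the contraction bound from Theorem \ref{lem:contraction_bis} for the $2^{-m}$ term, the inductive recursion for $\|\g^{m}-\g^{m,n}\|_{T,\lambda}$, and, as the only new ingredient, a damped version of Lemma \ref{lem:Lipsch_time} obtained from Proposition \ref{prop_verification_bis} via the Leibniz expansion of $\Lc$ together with \eqref{eq:bound_der_n}--\eqref{eq:bound_der_n_higher}--\eqref{eq:der_charact_Y}, Lemma \ref{lemm:estimates_ter}, and Assumption \ref{ass:additional_assumption_bis}.
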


The remaining part of the section is devoted to prove Theorem \ref{th:main}. The proof of Theorem \ref{th:main_bis} is analogous and thus, is omitted. {\bf From now on, through the rest of this section, we will assume that the hypotheses of Theorem \ref{th:main} are satisfied. In particular, we fix an arbitrary $T$, and a suitable $\lambda>0$ such that \eqref{eq:contraction} holds with $c=1/2$. Also, we will denote by $\kappa$ any positive constant that depends at most on $T$, $\theta$, $\nu(\dd y)$, $Y$ and coefficients $a,b$. Finally, we initialize the sequences $\gamma^{m,n}$ and $\gamma^{m}$ as in \eqref{eq:initialization} with $\gamma_0=(\a_0,\b_0)$ satisfying $\max \{ |{\a}_{0}| , |{\b}_{0} |  \} \leq \| a \|_{L^{\infty}
}+\| b \|_{L^{\infty}
} $.}

\begin{lemma}\label{lem:Lipsch_time}
For any $\g\in L^{\infty}([0,T]:\mathcal{M}^{d\times d}\times \R^d)$ with $\|\g\|_{T,0}\leq   \| a \|_{L^{\infty}}+ \| b \|_{L^{\infty}},$ we have the following
\begin{equation}\label{eq:lipschitz}
|  \Psi_{j,t} (\g) - \Psi_{j,t'} (\g)  | \leq \kappa |t-t'|, \qquad t,t'\in[0,T],\quad j=1,2,
\end{equation}
\end{lemma}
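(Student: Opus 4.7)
The plan is to exploit the Fourier representation of $\Psi_{1,t}(\g)$ and $\Psi_{2,t}(\g)$ from Proposition \ref{prop_verification}, combined with the time-Lipschitz estimates of Lemma \ref{lemm:estimates_ter} and the integrability conditions of Assumption \ref{ass:additional_assumption}. First, I would fix $0\leq t\leq t'\leq T$ and set
$\phi_s(\eta):= \exp\bigl(-\tfrac12\langle\eta,C^{\a}_s\eta\rangle - i\langle\eta,m^{\a,\b}_s\rangle + n^{\a}_s(-\eta)\bigr)\hat{\mu}_Y\bigl(-(\Phi^{\a}_{0,s})^\top\eta\bigr)$,
so that $\Psi_{1,t'}(\g)-\Psi_{1,t}(\g) = \frac{1}{(2\pi)^d}\int_{\R^d}\hat{a}(\eta)(\phi_{t'}(\eta)-\phi_t(\eta))\dd\eta$. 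I would then telescope $\phi_{t'}-\phi_t$ into four increments $\Delta_1,\Delta_2,\Delta_3,\Delta_4$ isolating, in turn, the $s$-variation of $C^{\a}_s$, $m^{\a,\b}_s$, $n^{\a}_s(-\eta)$, and $\Phi^{\a}_{0,s}$ (the last entering through $\hat{\mu}_Y$), in complete analogy with the five-term decomposition in Step 1 of the proof of Theorem \ref{lem:contraction}, the only difference being that here $(\a,\b)$ is fixed and it is the time index that varies.

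Each $\Delta_i$ is bounded pointwise by combining the elementary inequality $|\ee^z-\ee^w|\leq |z-w|\,\ee^{\max\{\operatorname{Re} z,\operatorname{Re} w\}}$ with the fact that $\operatorname{Re}(-\tfrac12\langle\eta,C^{\a}_s\eta\rangle)\leq 0$, $\operatorname{Re}(-i\langle\eta,m^{\a,\b}_s\rangle)=0$, and $\operatorname{Re}(n^{\a}_s(-\eta))\leq 0$, so that the moduli of all exponential factors entering $\phi_s$ are bounded by $1$; then, plugging in the time-Lipschitz bounds \eqref{eq:est_1_ter}, \eqref{eq:est_1_b_ter}, \eqref{eq:est_1_c_ter}, \eqref{eq:estim_n_ter}, one obtains $|\Delta_1|\leq \kappa(t'-t)|\eta|^2 |\hat{\mu}_Y(-(\Phi^{\a}_{0,t'})^\top\eta)|$, $|\Delta_2|\leq \kappa(t'-t)|\eta| |\hat{\mu}_Y(-(\Phi^{\a}_{0,t'})^\top\eta)|$, $|\Delta_3|\leq \kappa(t'-t)(|\eta|^2+|\eta|+1)|\hat{\mu}_Y(-(\Phi^{\a}_{0,t'})^\top\eta)|$, and $|\Delta_4|\leq \kappa(t'-t)|\eta|\,\Eb[|Y|]$, the last invoking the Lipschitz property $|\hat{\mu}_Y(\xi)-\hat{\mu}_Y(\xi')|\leq \Eb[|Y|]|\xi-\xi'|$ together with \eqref{eq:est_1_ter}.

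Multiplying by $|\hat{a}(\eta)|$, integrating, and collecting terms, $|\Psi_{1,t'}(\g)-\Psi_{1,t}(\g)|$ is bounded by $(t'-t)$ times a linear combination of integrals $\int |\hat{a}(\eta)||\eta|^k |\hat{\mu}_Y(-(\Phi^{\a}_{0,t'})^\top\eta)|\dd\eta$ for $k\in\{0,1,2\}$ and of $\int|\hat{a}(\eta)||\eta|\dd\eta$. Finiteness of the $k=2$ integral is supplied by the first condition in \eqref{eq:additional_assumption}, with the uniform bounds $|\Phi^{\a}_{0,s}|,|(\Phi^{\a}_{0,s})^{-1}|\leq \ee^{T\|\a\|_{T,0}}$ from \eqref{eq:est_1} used (via a change of variables) to transfer integrability to the shifted argument of $\hat{\mu}_Y$; the $k=1$ and the auxiliary $\int|\hat{a}||\eta|$ terms are handled by the second condition in \eqref{eq:additional_assumption}; the $k=0$ term by $\hat{a}\in L^1(\R^d)$, which is Assumption \ref{assump:coeff_B} and is implied by Assumption \ref{ass:additional_assumption}. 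The same argument, with $\hat{b}$ replacing $\hat{a}$ throughout, yields the twin bound for $|\Psi_{2,t'}(\g)-\Psi_{2,t}(\g)|$.

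The main obstacle is the uniformity of the Lipschitz constant down to $t=0$: in the proof of Theorem \ref{lem:contraction}, the Gaussian factor $\ee^{-\frac12\langle\eta,C^{\a}_s\eta\rangle}$, through the estimate \eqref{eq:guassian_estimate}, absorbed the polynomial factors in $|\eta|$, but this factor degenerates to $1$ at $s=0$ and hence cannot be used here. Assumption \ref{ass:additional_assumption} is precisely what compensates for the loss of Gaussian decay, providing integrability of the $|\eta|^2$- and $|\eta|$-weighted Fourier integrands directly. The remaining technical point is to transfer the integrability guaranteed by $\int|\hat{a}(\eta)||\hat{\mu}_Y(\eta)||\eta|^2\dd\eta<\infty$ to the argument $-(\Phi^{\a}_{0,s})^\top\eta$ appearing in $\hat{\mu}_Y$, which is done through a change of variables together with the uniform two-sided bounds on $\Phi^{\a}_{0,s}$ coming from \eqref{eq:est_1}.
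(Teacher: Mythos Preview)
Your decomposition and the pointwise bounds on $\Delta_1,\dots,\Delta_4$ are correct and match the paper's four-term splitting. The gap is in the last step, where you claim that a change of variables transfers the integrability $\int|\hat a(\eta)||\hat\mu_Y(\eta)||\eta|^2\,\dd\eta<\infty$ to $\int|\hat a(\eta)||\hat\mu_Y(-(\Phi^{\a}_{0,t'})^\top\eta)||\eta|^2\,\dd\eta$. A change of variables $\xi=(\Phi^{\a}_{0,t'})^\top\eta$ moves the linear map out of the argument of $\hat\mu_Y$ but \emph{into} the argument of $\hat a$, producing $\int|\hat a((\Phi^{\a}_{0,t'})^{-\top}\xi)||\hat\mu_Y(\xi)||\xi|^2\,\dd\xi$ up to bounded factors. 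Since Assumption~\ref{ass:additional_assumption} couples $\hat a$ and $\hat\mu_Y$ at the \emph{same} point, there is no way in general to control $|\hat a(A\xi)|$ by $|\hat a(\xi)|$, and the two-sided bounds on $\Phi$ and $\Phi^{-1}$ do not help here. The alternative of bounding $|\hat\mu_Y|\le 1$ fails too, because $\int|\hat a(\eta)||\eta|^2\,\dd\eta$ is not assumed finite.

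The paper does not discard the Gaussian factor. It keeps $\ee^{-\frac12\langle\eta,C^{\a}_t\eta\rangle}\le \ee^{-t|\eta|^2/(2\kappa)}$ in the bound and then uses the Lipschitz property of $\hat\mu_Y$ directly,
\[
|\hat\mu_Y(-(\Phi^{\a}_{0,t})^\top\eta)|\le |\hat\mu_Y(\eta)|+\kappa\,t\,|\eta|,
\]
which follows from $|\Phi^{\a}_{0,t}-I_d|\le \kappa t$ (Lemma~\ref{lemm:estimates_ter}) and $\Eb[|Y|]<\infty$. The first piece feeds straight into the first condition of \eqref{eq:additional_assumption}. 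For the second piece one is left with $t|\eta|^3\ee^{-t|\eta|^2/(2\kappa)}$, and the extra factor of $t$ exactly cancels the $t^{-1}$ produced by \eqref{eq:guassian_estimate}, leaving $\kappa|\eta|$; this is then integrable against $|\hat a|$ by the second condition of \eqref{eq:additional_assumption}. So the Gaussian is still essential for the cross term, even though it cannot carry the whole estimate by itself near $t=0$.
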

\begin{proof}
By Proposition \ref{prop_verification} and triangular inequality we obtain that
\begin{equation}
|\Psi_{1,t}(\alpha,\beta) -  \Psi_{1,t'}(\alpha,\beta)| \leq \frac{1}{(2\pi)^d} \sum_{i=1}^4 I_i,
\end{equation}
where
\begin{align}
I_1 &=     \int_{\R^d} |\hat{a}(\eta)| \left| \ee^{-\frac{1}{2}\langle \eta, C^{\alpha}_t\eta\rangle} - \ee^{-\frac{1}{2}\langle \eta, C^{\alpha}_{t'}\eta\rangle } \right| \left| \ee^{- i\,\langle  \eta, m^{\alpha,\beta}_t\rangle +   n^{\alpha}_t(-\eta) } \right| \big|\hat{\mu}_Y \big(\!-\!(\Phi_{0,t}^{\alpha})^\top \eta\big)\big| \dd \eta     ,\\
I_2 &=   \int_{\R^d} |\hat{a}(\eta)| \left| \ee^{ -\frac{1}{2} \langle \eta, C^{\alpha}_{t'} \eta \rangle + n^{\alpha}_t (-\eta) }\right| \left|\ee^{- i\,\langle  \eta, m^{\alpha,\beta}_t\rangle } - \ee^{- i\,\langle  \eta, m^{\alpha,\beta}_{t'}\rangle } \right|  \big|\hat{\mu}_Y \big(\!-\!(\Phi_{0,t}^{\alpha})^\top \eta\big)\big| \dd \eta     ,\\
I_3 &=    \int_{\R^d} |\hat{a}(\eta)| \left| \ee^{ -\frac{1}{2} \langle \eta, C^{\alpha}_{t'} \eta \rangle  - i\,\langle  \eta, m^{\alpha,\beta}_{t'}\rangle }\right| \left|\ee^{n^{\alpha}_t (-\eta)  } - \ee^{ n^{\alpha}_{t'} (-\eta) } \right|  \big|\hat{\mu}_Y \big(\!-\!(\Phi_{0,t}^{\alpha})^\top \eta\big)\big| \dd \eta     ,\\
I_4 &=    \int_{\R^d} |\hat{a}(\eta)| \left| \ee^{ -\frac{1}{2} \langle \eta, C^{\alpha}_{t'} \eta \rangle  - i\,\langle  \eta, m^{\alpha,\beta}_{t'}\rangle + n^{\alpha}_{t'} (-\eta) }\right| \big|\hat{\mu}_Y  \big(\!-\!(\Phi_{0,t}^{\alpha})^\top \eta\big) - \hat{\mu}_Y  \big(\!-\!(\Phi_{0,t'}^{\alpha})^\top \eta\big) \big| \dd \eta      .
\end{align}
We note that 
\begin{align}
\big|\hat{\mu}_Y \big(\!-\!(\Phi_{0,t}^{\alpha})^\top \eta\big)\big| & \leq |\hat{\mu}_Y ( \eta)| + \big|\hat{\mu}_Y \big(\!-\!(\Phi_{0,t}^{\alpha})^\top \eta\big) - \hat{\mu}_Y(\eta) \big|  \leq |\hat{\mu}_Y ( \eta)| + \big|\big(  (\Phi^{\alpha}_{0,t})^\top   -   (\Phi_{0,0}^{\alpha})^\top  \big)\eta  \big|  \, \E [|Y|]
\intertext{(by \eqref{eq:est_1_ter})}
& \leq |\hat{\mu}_Y ( \eta)| + \kappa t |\eta | . \label{eq:ste234}
\end{align}
By \eqref{eq:est_1_b_ter} and \eqref{eq:estimate_quad_form}, we obtain that
\begin{equation}
 \left| \ee^{-\frac{1}{2}\langle \eta, C^{\alpha}_t\eta\rangle} - \ee^{-\frac{1}{2}\langle \eta, C^{\alpha}_{t'}\eta\rangle } \right| \leq 
 \kappa\, |t - t' ||\eta|^2 \ee^{- \frac{t|\eta|^2}{2 \kappa}}  ,
\end{equation}
which, combined to \eqref{eq:estim_e_n} with $\eps$ suitably small, yields that
\begin{align}
I_1 &\leq\frac{ \kappa |t-t'| }{(2\pi)^d}\int_{\R^d}  |\hat{a}(\eta)|    \ee^{-\frac{ t|\eta|^2}{2 \kappa}} |\eta|^2 \big|\hat{\mu}_Y \big(\!-\!(\Phi_{0,t}^{\alpha})^\top \eta\big)\big| \dd \eta 
\intertext{(by \eqref{eq:ste234} along with \eqref{eq:guassian_estimate})}
&  \leq\frac{ \kappa |t-t'| }{(2\pi)^d} \int_{\R^d}  \ee^{-\frac{ t|\eta|^2}{2 \kappa}}  |\hat{a}(\eta)|  \big( |\eta|^2 \big|\hat{\mu}_Y(\eta)\big| + |\eta| \big) \dd \eta \leq \kappa |t-t'|.
\end{align}
In the last inequality above, we employed Assumption \ref{ass:additional_assumption}. Similarly, we find the same bound for $I_2$, $I_3$, $I_4$ by applying \eqref{eq:est_1_c_ter},  \eqref{eq:estim_n_ter} and \eqref{eq:estim_n}-\eqref{eq:estimate_quad_form} again. This proves \eqref{eq:lipschitz} for $j=1$. The proof for $j=2$ is identical. 
\end{proof}

\begin{lemma}
\label{lem:discrete map minus conts map}
For any $\g,\g' \in L^{\infty}([0,T]:\mathcal{M}^{d\times d}\times \R^d)$ with $\|\g\|_{T,0}\leq   \| a \|_{L^{\infty}}+ \| b \|_{L^{\infty}},$ we have that
\begin{equation}
\|  \Psi^{(n)}(\g') - \Psi({\g})  \|_{T,\lambda}\leq  \frac{\kappa}{n} + \frac{1}{2}\|  \g' - {\g}  \|_{T,\lambda}, \quad n\in\mathbb{N},
\end{equation}
\end{lemma}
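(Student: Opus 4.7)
The plan is to decompose the difference by adding and subtracting $\Psi(\gamma')$, so as to isolate the pure time-discretization error from the contraction estimate. By the triangle inequality,
\begin{align*}
\|\Psi^{(n)}(\gamma') - \Psi(\gamma)\|_{T,\lambda}
\leq \|\Psi^{(n)}(\gamma') - \Psi(\gamma')\|_{T,\lambda}
+ \|\Psi(\gamma') - \Psi(\gamma)\|_{T,\lambda}.
\end{align*}
I would control the first term by Lemma \ref{lem:Lipsch_time} and the second term by Theorem \ref{lem:contraction}.

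For the discretization piece, I use that $\Psi^{(n)}(\gamma') = \bigl(\Psi(\gamma')\bigr)^{(n)}$. Fixing $t\in[t_{i-1},t_i[$ gives $\Psi^{(n)}_t(\gamma') = \Psi_{t_i}(\gamma')$, so
\begin{align*}
\bigl|\Psi_{j,t}^{(n)}(\gamma') - \Psi_{j,t}(\gamma')\bigr| = \bigl|\Psi_{j,t_i}(\gamma') - \Psi_{j,t}(\gamma')\bigr|,\qquad j=1,2.
\end{align*}
Provided $\gamma'$ also lies in the invariant ball $B$ of Corollary \ref{cor:fixed_point}, Lemma \ref{lem:Lipsch_time} bounds the right-hand side by $\kappa\,|t_i - t| \leq \kappa T/n$, uniformly in $t$. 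Multiplying by $e^{-\lambda t}\leq 1$ and taking the essential supremum over $t\in[0,T]$ yields $\|\Psi^{(n)}(\gamma') - \Psi(\gamma')\|_{T,\lambda} \leq \kappa/n$.

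For the contraction piece, $\lambda$ was chosen so that Theorem \ref{lem:contraction} applies with $c=1/2$, immediately giving $\|\Psi(\gamma') - \Psi(\gamma)\|_{T,\lambda} \leq \tfrac{1}{2}\|\gamma' - \gamma\|_{T,\lambda}$. Combining the two bounds produces exactly the stated inequality.

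The main subtlety, rather than any real obstacle, is that both Lemma \ref{lem:Lipsch_time} and Theorem \ref{lem:contraction} require their argument to sit in $B = \{\|\cdot\|_{T,0}\leq \|a\|_{L^\infty}+\|b\|_{L^\infty}\}$, whereas the statement of Lemma \ref{lem:discrete map minus conts map} only imposes this bound on $\gamma$. In the place where this lemma is actually invoked (the proof of Theorem \ref{th:main}), this is not an issue: the role of $\gamma'$ is played by the iterate $\gamma^{m-1,n}=\Psi^{(n)}(\gamma^{m-2,n})$, and since $|\Psi_1|\le \|a\|_{L^\infty}$ and $|\Psi_2|\le \|b\|_{L^\infty}$, the operator $\Psi$ (and hence its piecewise-constant projection $\Psi^{(n)}$) maps into $B$; combined with the initialization \eqref{eq:initialization}, this guarantees $\gamma^{m,n}\in B$ for every $m,n$, so that both applications above are legitimate with constants that depend only on the quantities already absorbed into $\kappa$.
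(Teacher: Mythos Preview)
Your argument is essentially identical to the paper's: split via the triangle inequality into $\|\Psi^{(n)}(\gamma')-\Psi(\gamma')\|_{T,\lambda}+\|\Psi(\gamma')-\Psi(\gamma)\|_{T,\lambda}$, bound the first term by Lemma~\ref{lem:Lipsch_time} (yielding $\kappa/n$) and the second by the contraction property with $c=1/2$. You also correctly flag that the lemma as stated only constrains $\gamma$, whereas both ingredients need $\gamma'\in B$ as well; the paper glosses over this, and your explanation of why it is harmless in the actual application (all iterates $\gamma^{m,n}$ remain in $B$ by Remark~\ref{rem:disc_cont} and the initialization) is exactly right.
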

\begin{proof}
By triangular inequality, we obtain that
\begin{equation}
\|  \Psi^{(n)}(\g') - \Psi( {\g})  \|_{T,\lambda}  \leq \|  \Psi^{(n)}(\g') - \Psi(\g')  \|_{T,\lambda} + \|  \Psi(\g') - \Psi({\g})  \|_{T,\lambda}.
\end{equation}
For the second term above, we use the contraction property of Theorem \ref{lem:contraction} with $c=1/2$. To see the bound for the first term, we write the following
\begin{align}
\big| \big(\Psi^{(n)}_{j,t}(\g')\big) - \big(\Psi_{j,t}(\g')\big) \big| \leq \sum_{i=1}^{n}\big| \big(\Psi_{j,t_i}(\g')\big) - \big(\Psi_{j,t}(\g')\big) \big|  {\bf 1}_{[t_{i-1},t_i[}, \quad t\in[0,T],\quad j=1,2,
\end{align}
which yields that
\begin{align}
\big\| \Psi_j^{(n)}(\g') - \Psi_j(\g')\big\|_{T,\lambda} &\leq \sup_{t\in [0,T]} \ee^{-\lambda t}\sum_{i=1}^{n}\big| \big(\Psi_{j,t_i}(\g')\big) - \big(\Psi_{j,t}(\g')\big) \big|  {\bf 1}_{[t_{i-1},t_i[}(t)
\intertext{(by applying Lemma \ref{lem:Lipsch_time})}
& \leq  \kappa \sup_{t\in [0,T]} \ee^{-\lambda t} \sum_{i=1}^{n} |t-t_i|   {\bf 1}_{[t_{i-1},t_i[}(t) \leq \frac{\kappa}{n},\quad t\in[0,T],\quad j=1,2.
\end{align}
This concludes the proof.
\end{proof}

\begin{remark}\label{rem:disc_cont}
For any $\g \in L^{\infty}([0,T]:\mathcal{M}^{d\times d}\times \R^d)$, it is easy to observe from the definition of $\Psi^{(n)}$ that 
\begin{equation}
\|  \Psi^{(n)}(\g) \|_{T,0}\leq \|  \Psi (\g)   \|_{T,0}.
\end{equation}
\end{remark}
\begin{lemma}\label{lem:conv_n}
For any $m,n\in\mathbb{N},$ we have that 
\begin{equation}\label{eq:estim_induction}
\|{\g}^{m,n} -{\g}^{m} \|_{T,\lambda} \leq 2\Big({1-\frac{1}{2^m}}\Big)\frac{\kappa}{n}\leq \frac{\kappa}{n}.
\end{equation}
\end{lemma}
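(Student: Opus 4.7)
The natural approach is induction on $m$, with Lemma \ref{lem:discrete map minus conts map} supplying the inductive step.

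\smallskip
\noindent\textbf{Base case $m=0$.} By the common initialization \eqref{eq:initialization}, $\g^{0,n}=\g^0$ identically, so the left-hand side is $0$, and the right-hand side is $2(1-1/2^0)\kappa/n=0$ as well.

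\smallskip
\noindent\textbf{Inductive step.} Assume the estimate holds at level $m-1$. Since $\g^{m-1}=\Psi(\g^{m-2})$ and $a,b$ are bounded, the map $\Psi$ leaves the ball $B$ of \eqref{eq:set_B} invariant (as observed in the proof of Corollary \ref{cor:fixed_point}), so an induction starting from $\g_0$ (which lies in $B$ by the hypothesis on $\a_0,\b_0$) gives $\|\g^{m-1}\|_{T,0}\leq \|a\|_{L^\infty}+\|b\|_{L^\infty}$. Thus Lemma \ref{lem:discrete map minus conts map} can be applied with $\g'=\g^{m-1,n}$ and $\g=\g^{m-1}$, yielding
\begin{align}
\|\g^{m,n}-\g^{m}\|_{T,\lambda} = \|\Psi^{(n)}(\g^{m-1,n})-\Psi(\g^{m-1})\|_{T,\lambda} \leq \frac{\kappa}{n}+\frac{1}{2}\|\g^{m-1,n}-\g^{m-1}\|_{T,\lambda}.
\end{align}
Inserting the inductive hypothesis $\|\g^{m-1,n}-\g^{m-1}\|_{T,\lambda}\leq 2(1-1/2^{m-1})\kappa/n$, we obtain
\begin{align}
\|\g^{m,n}-\g^{m}\|_{T,\lambda} \leq \frac{\kappa}{n}+\Big(1-\frac{1}{2^{m-1}}\Big)\frac{\kappa}{n} = \Big(2-\frac{1}{2^{m-1}}\Big)\frac{\kappa}{n} = 2\Big(1-\frac{1}{2^m}\Big)\frac{\kappa}{n},
\end{align}
which is the desired bound. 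The final inequality $2(1-1/2^m)\kappa/n\leq \kappa/n$ is just a relabeling of the constant $\kappa$ (by a factor of $2$) and uses $1-1/2^m<1$.

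\smallskip
\noindent\textbf{Main obstacle.} None of the steps is genuinely hard; the only subtlety is verifying that the intermediate iterates $\g^{m-1}$ (and likewise the discrete iterates implicitly, though they are not needed as the $\g'$ argument of Lemma \ref{lem:discrete map minus conts map} is unconstrained) really lie in the ball $B$ on which the contraction estimate \eqref{eq:contraction} with $c=1/2$ was established. This is why I pointed out the $\Psi$-invariance of $B$ at the start of the inductive step; once this bookkeeping is in place, the recursion collapses to the geometric-series identity $1+\tfrac{1}{2}\cdot 2(1-1/2^{m-1})=2(1-1/2^m)$, and the telescoping factor in front of $\kappa/n$ stays uniformly bounded in $m$.
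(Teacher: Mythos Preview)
Your proof is correct and follows the same induction as the paper (which anchors at $m=1$ rather than $m=0$, an immaterial difference). One minor remark: contrary to your parenthetical, the proof of Lemma \ref{lem:discrete map minus conts map} does implicitly require $\g'$ to lie in the ball $B$ as well---both Lemma \ref{lem:Lipsch_time} and the contraction estimate of Theorem \ref{lem:contraction} use it---but as you note, this holds for the discrete iterates $\g^{m-1,n}$ via Remark \ref{rem:disc_cont}, so the argument goes through unchanged.
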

\begin{proof}
First note that, by the assumptions on $\g^{0,n},\gamma^{n}$ and Remark \ref{rem:disc_cont}, we have that
\begin{equation}
\|{\g}^{m,n}\|_{T,0} , \|{\g}^{m}\|_{T,0} \leq  \| a \|_{L^{\infty}}+ \| b \|_{L^{\infty}}, \quad m,n\in \mathbb{N}\cup \{ 0 \}.
\end{equation}
We now prove the result by induction. For $m = 1$, by Lemma \ref{lem:discrete map minus conts map} and the fact that $\g^{0,n}=\gamma^{n}$, we obtain that
\eqstar{
\|{\g}^{1,n} -{\g}^{1} \|_{T,\lambda} = \| \Psi^{(n)}({\g}^{0,n}) - \Psi({\g}^{0}) \|_{T,\lambda} \leq \frac{\kappa}{n}.
}
Now suppose that \eqref{eq:estim_induction} holds for $m-1,$ and we prove it true for $m$. Again, applying Lemma \ref{lem:discrete map minus conts map} yields that
\eqstar{
\|{\g}^{m,n} -{\g}^{m} \|_{T,\lambda} &= \| \Psi^{(n)}({\g}^{m-1,n}) - \Psi({\g}^{m-1}) \|_{T,\lambda} \leq \frac{\kappa}{n} + \frac{1}{2}  \|{\g}^{m-1,n} -{\g}^{m-1} \|_{T,\lambda}
\intertext{(by inductive hypothesis)}
& \leq \frac{\kappa}{n} + \Big({1-\frac{1}{2^{m-1}}}\Big)\frac{\kappa}{n} = 2\Big({1-\frac{1}{2^m}}\Big)\frac{\kappa}{n}.
} 
\end{proof}
We are now in the position to prove Theorem \ref{th:main}.

\begin{proof}[Proof of Theorem \ref{th:main}] 
First note that, by assumption on $\g^0$ and by Corollary \ref{eq:set_B}, we have that 
\begin{equation}
\|  {\g}^{0} - \overline{\g} \|_{T,\lambda}\leq \|\g^{0}\|_{T,0}+\|\overline{\g}\|_{T,0}\leq  2 (\| a \|_{L^{\infty}}+\| b \|_{L^{\infty}}).
\end{equation}
Therefore, by applying the contraction property in Theorem \ref{lem:contraction} with $c=1/2$ it is straightforward to see that 
\begin{equation}\label{eq:converg_cont_picard}
 \|  {\g}^{m} - \overline{\g} \|_{T,\lambda} \leq 2^{-(m-1)}  (\| a \|_{L^{\infty}}+\| b \|_{L^{\infty}}) .
\end{equation}
 Now, by triangular inequality, we obtain that
\begin{equation}
\|{\g}^{m,n} - \overline{\g} \|_{T,\lambda} \leq \|{\g}^{m,n} -{\g}^{m} \|_{T,\lambda} + \| {\g}^{m} - \overline{\g} \|_{T,\lambda}.
\end{equation}
The result follows from applying Lemma \ref{lem:conv_n} to the first term, and \eqref{eq:converg_cont_picard} to second.
\end{proof}

\section{Numerical results}
\label{section:numerics}
In this section, we demonstrate the applicability of our theoretical results by testing them on examples for which semi-explicit solutions are available. We verify the convergence of the discretized Picard iteration scheme and the rate of error convergence as discussed in Section \ref{section: discrete Picard}. We performed all the numerical computations on a computing device with 2,4 GHz Intel i5 processor and 16 GB RAM. 

\subsection{Gaussian benchmark}
\label{example: gaussian}
We first consider the following SDE in one-dimension
\begin{equation}\label{eq:example_1D}
\dd X_t  = \bigl( a X_t +  \Eb[\cos(X_t)] \bigr) \dd t + \sigma d W_t  , \ t>0, \quad X_0 = Y,
\end{equation}
with $a\in \R$ and $\sigma>0$, and where $Y$ has Laplace distribution $\mu_Y(\dd y) = \frac{1}{2}\ee^{-|y|} \dd y$. Comparing with our setting of \eqref{eq:mckean_SDE_jumps}, it gives us that
\eqstar{
a(x) \equiv a, \quad b(x) =\cos(x),
}
and that $\nu(\dd y)\equiv 0$, i.e., there are no jumps. Here, $a$ and $b$ are bounded functions but they do not belong to $L^1(\R)$. However, considering the damped coefficients $\overline{a}(x) = \frac{a}{(1+x^2)}$ and $\overline{b}(x) = \frac{\cos x}{(1+x^2)}$, it is immediate to check that the Fourier transforms $\hat{\overline{a}}$, $\hat{\overline{b}}$ satisfy Assumption \ref{assump:coeff_bis_B} and \ref{ass:additional_assumption_bis}. Moreover, Assumption \ref{ellipticity} and \ref{ass:levy_bis} are also satisfied, and thus the results of 
 Sections \ref{sec:damping} and \ref{section: discrete Picard} do apply to \eqref{eq:example_1D}. In particular: we have the existence and uniqueness for the solution of the fixed-point equation \eqref{eq:ste15} from Corollary \ref{cor:fixed_point_bis}, and that the Picard sequence in \eqref{eq:discrete_picard}-\eqref{eq:initialization} converges to the solution $(\bar{\alpha},\bar{\beta})$ from Theorem \ref{th:main_bis}.
 
For this specification of $a,b$ it is possible to obtain semi-explicit solutions $(\bar{\a},\bar{\b})$ for \eqref{eq:ste15}, which we use as a benchmark to test the rate of convergence of our approximating scheme. In fact, by Lemma \ref{lem:ito} we have that $(\bar{\a},\bar{\b})$ satisfies
\begin{align}
\bar{\a}_t &= a,\\
\bar{\b}_t &= \Eb[\cos(X^{(\bar{\a},\bar{\b})}_t)] = 
\frac{\hat{\mu}_{X^{(\bar{\a},\bar{\b})}_t}(1) + \hat{\mu}_{X^{(\bar{\a},\bar{\b})}_t}(-1)}{2} = \frac{\ee^{-\frac{1}{2}C_t}}{1+ \ee^{2 a t}} \cos(m_t),
\end{align}
where we set

\begin{equation}\label{eq:m_C_True}
m_t := m^{\bar{\a},\bar{\b}}_t = \ee^{at} \int^t_0\ee^{- a s } \bX_s \dd s,\qquad C_t :=  C^{\bar{\a}}_t  =  \tfrac{\sigma^2}{2a}(\ee^{2at} - 1),
\end{equation}
respectively the mean and variance of the solution to \eqref{eq:example_1D}. We then obtain that $\bX_t = m^\prime_t - a m_t$ where $m_t$ solves the following equation
\eqlnostar{eq:ode m simple}{
m^\prime_t = a \, m_t + \frac{\ee^{-\frac{1}{2}C_t}}{1+ \ee^{2 a t}} \cos(m_t).
}
In the absence of a closed-form expression for $\bX_t,$ we treat the numerical solution from ODE \eqref{eq:ode m simple} as a proxy for the true value. We also point that, in this specific case, no numerical integration is required to compute $\gamma^{m,n}$, neither in the Fourier space nor in the original one. 

In order to verify the convergence rate of our method as derived in Theorem \ref{th:main}, we set the number of Picard iteration steps, $m = \log_2 (n)$ where $n$ is the number of time discretization steps. In Figure \ref{fig:error 1d simple}, for parameter values $a=1.5$, $\sigma = 0.8$, $T = 1.0,$ we vary $n = 2^k, 4 \leq k \leq 8$, and observe that the slope of log-error, i.e.,
\eqlnostar{eq:errror_mumerics}{
\log\Big( \max_{k=0,\cdots,n}{|\beta_{t_k}^{m,n} - \bar{\beta}_{t_k}|}   \Big) \quad (\text{with } t_k = \frac{k T}{n}), 
}
indeed matches the result in Theorem \ref{th:main}. In Figure \ref{fig:comparison 1d simple}, we compare the Picard scheme approximation $m^{\alpha^{m,n},\beta^{m,n}}_t$ of $m^{\bar{\a},\bar{\b}}_t$ with $n=2^4$ against the numerical solution obtained by solving ODE \eqref{eq:ode m simple}. This comparison shows that we obtain an accurate approximation even for small values of $n$ and $m$ in the discretized Picard iteration scheme.

\begin{figure}[htbp]
\centering
\subfigure[]{\label{fig:error 1d simple}\includegraphics[scale=0.45]{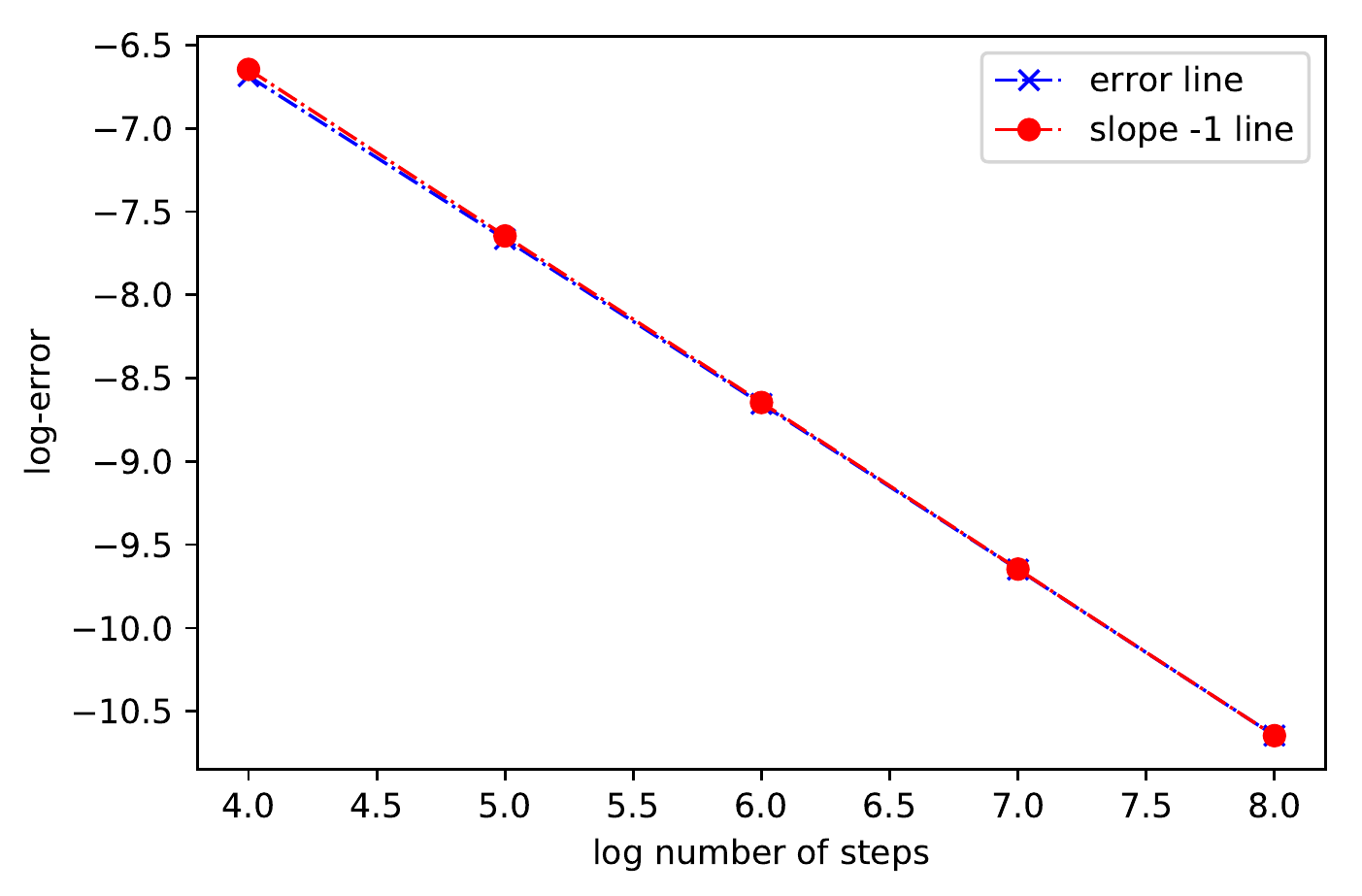}}
~
\subfigure[]{\label{fig:comparison 1d simple}\includegraphics[scale=0.45]{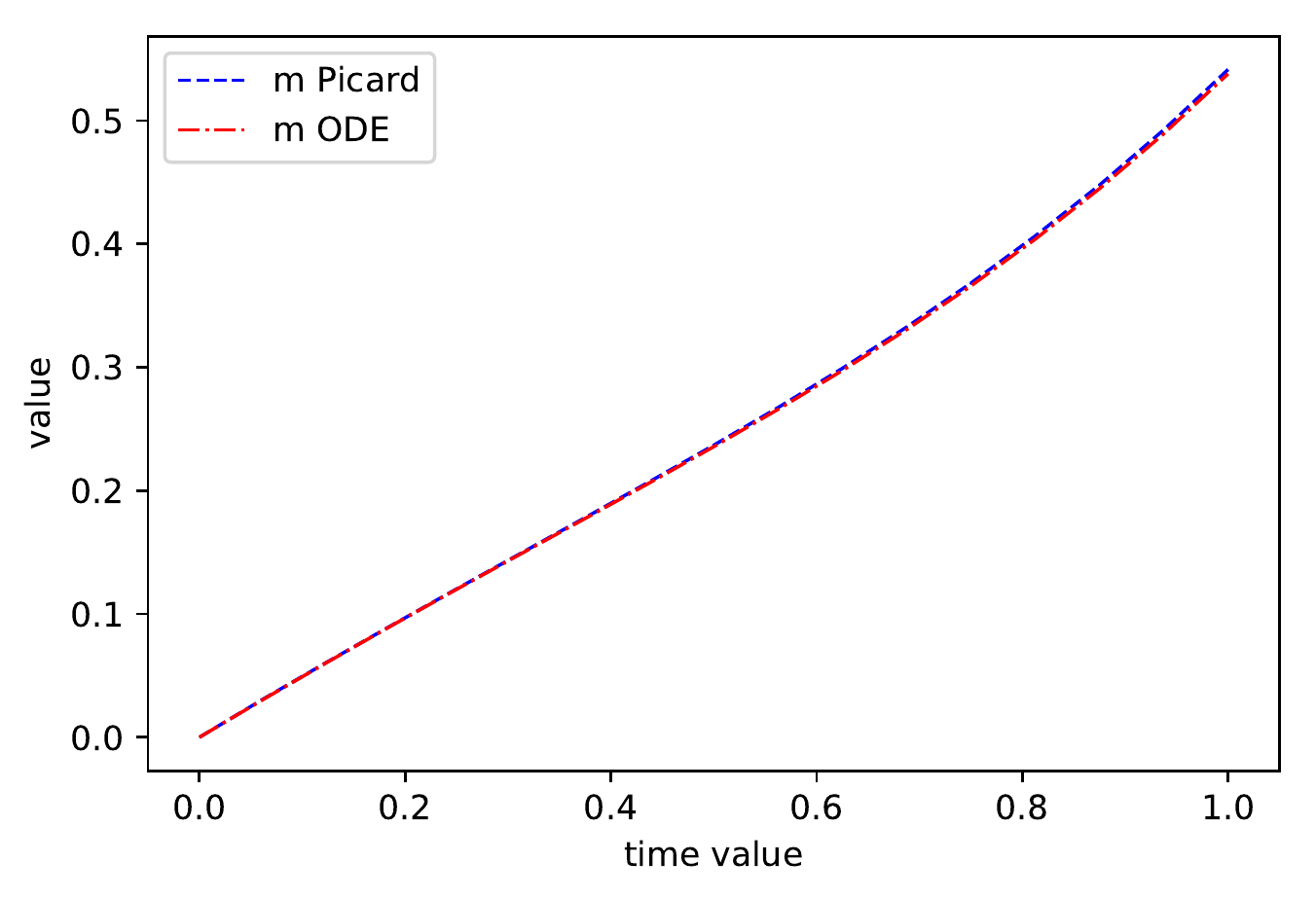}}\\
~
\caption{Error convergence rate and comparison of approximations for one-dimensional model.}
\label{fig:1d}
\end{figure}

\subsection{Jumps in one dimension}
\label{example:jumps 1d}
In this section, we generalize the previous example by adding L\'evy jumps to the McKean-Vlasov SDE in \eqref{eq:example_1D}. In particular, we consider a compound Poisson process with jump-intensity $\lambda$, where the distribution of jumps is defined in terms of an asymmetric double exponential density, i.e.,
\eqstar{
\chi(\dd y)  = \bigl(p \lambda_1 \ee^{-\lambda_1 y} \ind_{\{y > 0\}} + (1-p) \lambda_2 \ee^{-\lambda_2 y} \ind_{\{y < 0\}}\bigr) \dd y,
}
where $\lambda_1, \lambda_2 > 0$ and $p \in [0,1]$ represents the probability of upward jumps. The L\'evy measure of this process, which appears in some financial applications (see Kou model \cite{kou2002jump}), is $\nu = \lambda \chi$, and thus satisfies Assumption \ref{ass:levy_bis}. Therefore, the theoretical results of 
 Sections \ref{sec:damping} and \ref{section: discrete Picard} still apply to this case. We also note that the density of $\mu_{X^{(\a,\b)}_t}$ is not known in closed form and thus, our method based on Fourier transform has even more significance.
 
Since compound Poisson processes have finite activity, $\nu(\dd y)$ is a finite measure on $\R\setminus\{ 0 \}$. Thus it is convenient to simplify the L\'evy-Ito representation in \eqref{eq:levy_proc} by writing a pure-jump (non-compensated) stochastic integral on $\R$. This choice also simplifies the integral part in the characteristic exponent of $X^{(\a,\b)}_t$.  
Denoting once again by $(\bar{\a},\bar{\b})$ the unique solution to \eqref{eq:ste15}, we obtain the following (see Pascucci \cite[Page 465]{pascucci2011pde})
\eqstar{
\int_\Rb(\ee^{i \xi y } - 1) \nu (\dd y) = i \lambda \xi  \left( \frac{p}{\lambda_1 - i \eta} - \frac{1-p}{\lambda_2 + i \eta}\right),
}
which in turn gives us
\eqstar{
n_t(\eta) = n^{\bar{\a}}_t(\eta) = \frac{p \lambda}{a} \log\left( \frac{i \eta - \lambda_1}{i \eta \ee^{a t}- \lambda_1}\right) + \frac{(1-p) \lambda}{a} \log\left( \frac{i \eta + \lambda_2}{i \eta \ee^{a t} + \lambda_2}\right).
}
Using this, we can write that
\eqstar{
\bX_t &=\Eb[\cos(X^{(\bar{\a},\bar{\b})}_t)] = \frac{\hat{\mu}_{X^{(\bar{\a},\bar{\b})}_t}(1) + \hat{\mu}_{X^{(\bar{\a},\bar{\b})}_t}(-1)}{2}\\
&=\frac{\ee^{-\frac{1}{2} C_t }}{1 + \ee^{2a t}} \left(\frac{1 + \lambda^2_1}{\ee^{2at} + \lambda^2_1} \right)^{\frac{p \lambda}{2a}} \left(\frac{1 + \lambda^2_2}{\ee^{2at} + \lambda^2_2} \right)^{\frac{(1-p) \lambda}{2a}}\cos\left( m_t + \frac{p \lambda}{a} \bigl( \theta_1 - \theta^\prime_1 \bigr) + \frac{(1-p) \lambda}{a} \bigl( \theta_2 - \theta^\prime_2 \bigr) \right),
}
with $m_t$ and $C_t$ as in \eqref{eq:m_C_True} and where
\eqstar{
\theta_1 &= \arctan\Bigl(-\frac{1}{\lambda_1} \Bigr), &\theta_2 &= \arctan\Bigl(\frac{1}{\lambda_2} \Bigr), &\theta^\prime_1 &= \arctan\Bigl(-\frac{\ee^{at}}{\lambda_1} \Bigr),  &\theta^\prime_2 &= \arctan\Bigl(\frac{\ee^{at}}{\lambda_2} \Bigr).
}
Proceeding now like in Example \ref{example: gaussian}, we obtain a suitable modification of ODE \eqref{eq:ode m simple} for $m_t$, which can be solved numerically to obtain a reference benchmark for $m_t$ and $\bar{\b}_t$.

In Figure \ref{fig:error 1d jump}, like in Example \ref{example: gaussian}, we plot again the quantity in \eqref{eq:errror_mumerics}, and  observe that for parameter values  $a=0.25$, $\sigma = 1.0$, $T = 1.0$, $\lambda = 0.8$, $\lambda_1 = 0.5$, $\lambda_2 = 0.6$, $p = 0.35$, by varying $n = 2^k, 4 \leq k \leq 8$, we confirm the result in Theorem \ref{th:main}. In Figure \ref{fig:comparison 1d jump},  results of the numerical solution from the ODE are compared with the discretized Picard iteration scheme approximation $m^{\alpha^{m,n},\beta^{m,n}}_t$ of $m^{\bar{\a},\bar{\b}}_t$ with $n=2^4$ which once again illustrates the accuracy of our method even for small values of $n$ and $m.$

\begin{figure}[htbp]
\centering
\subfigure[]{\label{fig:error 1d jump}\includegraphics[scale=0.45]{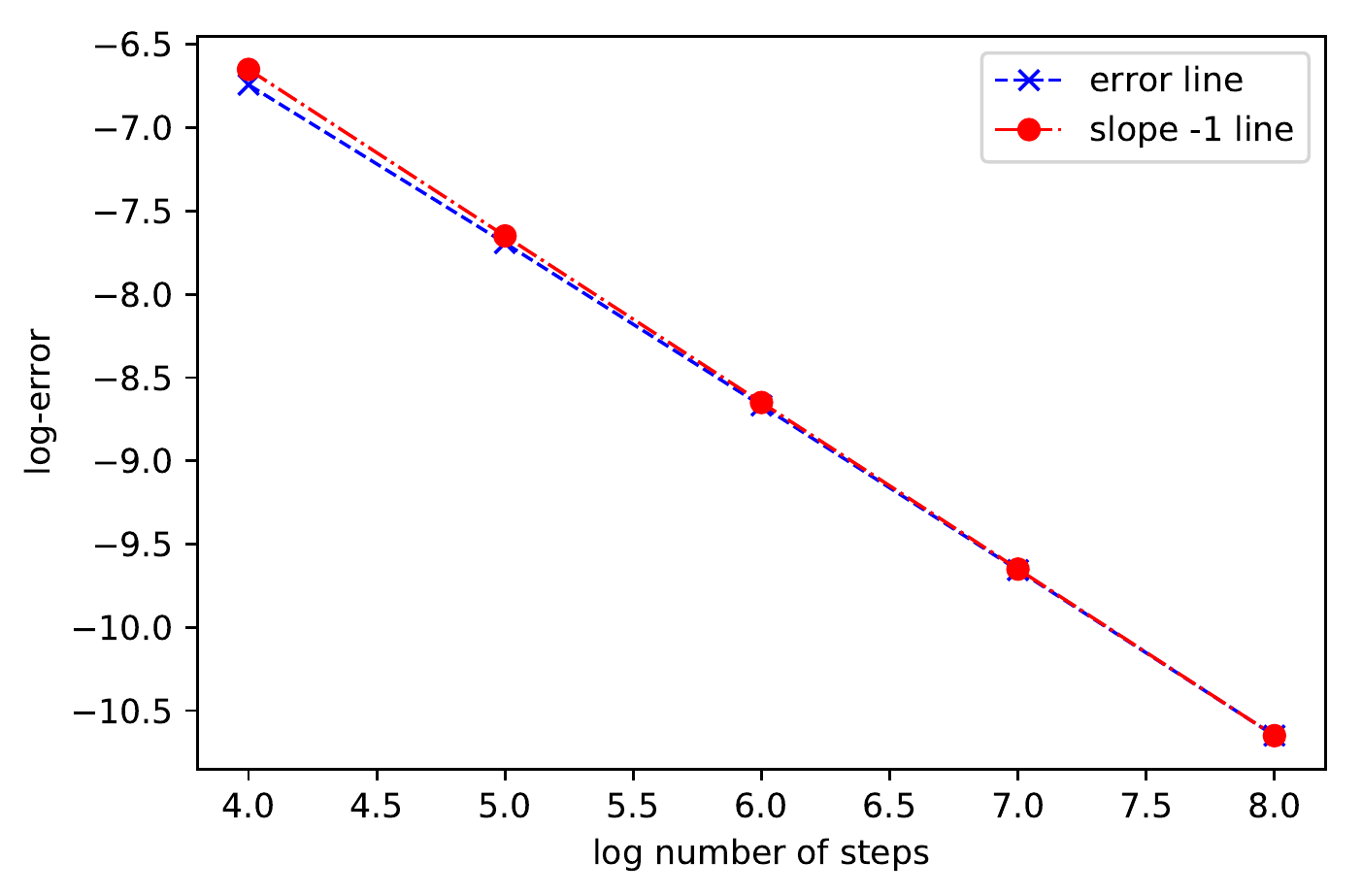}}
~
\subfigure[]{\label{fig:comparison 1d jump}\includegraphics[scale=0.45]{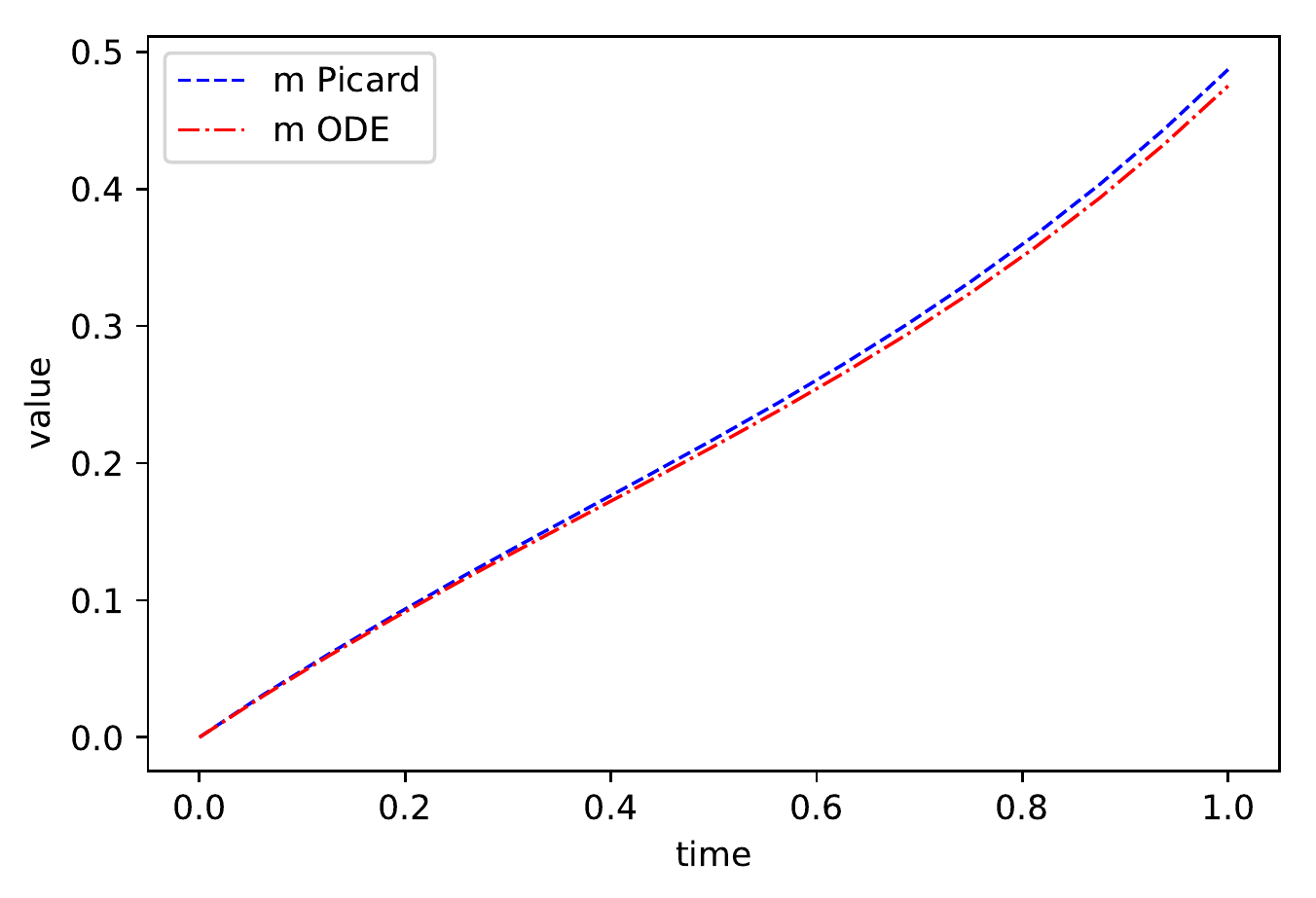}}\\
~
\caption{Error convergence rate and comparison of approximations for the model with jumps.}
\label{fig:1d jump}
\end{figure}

\subsection{Convergence rate in multiple dimensions}
\label{example: multiple dimensions 1}
In this example, we demonstrate that the error convergence rate of the Picard iteration scheme remains independent of the dimension modulo the error of the numerical approximation of the space integral needed to compute $E[b(X^{(\a,\b)}_t)]$. We generalize the MK-V SDE \eqref{eq:example_1D} in a way that the computation of $E[b(X^{(\a,\b)}_t)]$ still does not require numerical integration, and a semi-explicit benchmark (up to solving an ODE) for the solution is still available. We suppose that $Y$ is an $\Rb^d$-valued random variable with a Laplace type distribution given as $\mu_Y(\dd y) = \frac{1}{2^d}\ee^{-\sum^d_{i=1}|y_i|} \dd y$ and consider the following MK-V SDE 
\begin{equation}\label{eq:multi_dim_SDE}
\dd X_t  = \Bigl(
a X_t +  \Eb\big[\cos\big(\textstyle\sum^d_{i=1}X^i_t\big)\big]\mathbf{1} \Bigr) \dd t + \sigma d W_t  , \ t>0, \qquad X_0 = Y,
\end{equation}
where $a\in \R$, $\mathbf{1}=(1,\cdots,1)\in\R^d$, $W$ is a $q$-dimensional Brownian motion, and $\sigma \in \mathcal{M}^{d\times q}$ such that $\theta = \s \s^\top$ is positive definite. Note that \eqref{eq:multi_dim_SDE} can be put in the form \eqref{eq:mckean_SDE_jumps}-\eqref{eq:levy_proc} by setting 
\eqstar{
a(x) \equiv 
a I_d, \quad b(x) =\cos(\textstyle\sum^d_{i=1} x_i)\mathbf{1}, \quad \nu(\dd y) \equiv 0.
}
\begin{figure}[htbp]
\centering
\subfigure[]{\label{fig:error 2d}\includegraphics[scale=0.45]{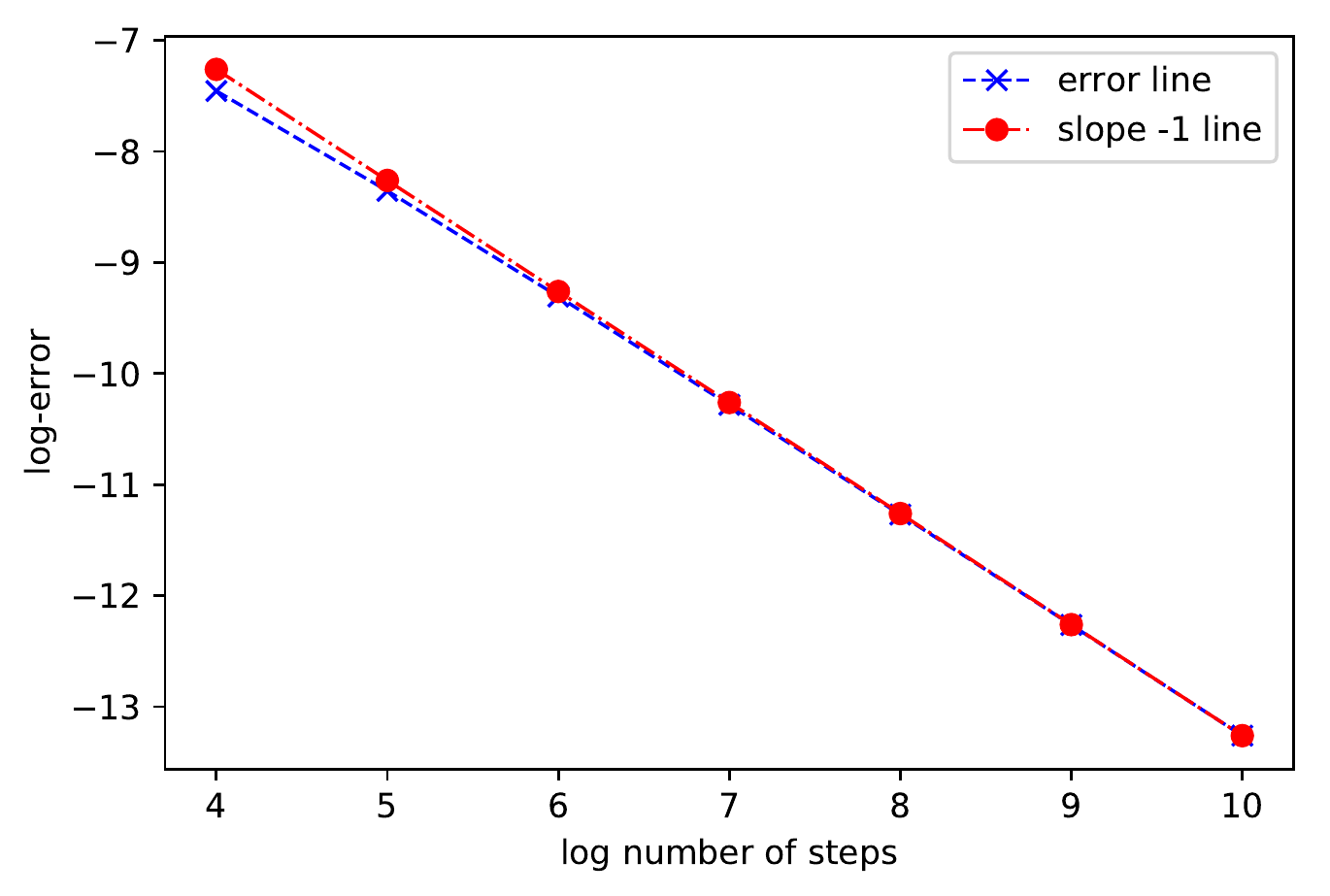}}
~
\subfigure[]{\label{fig:comparison 2d}\includegraphics[scale=0.45]{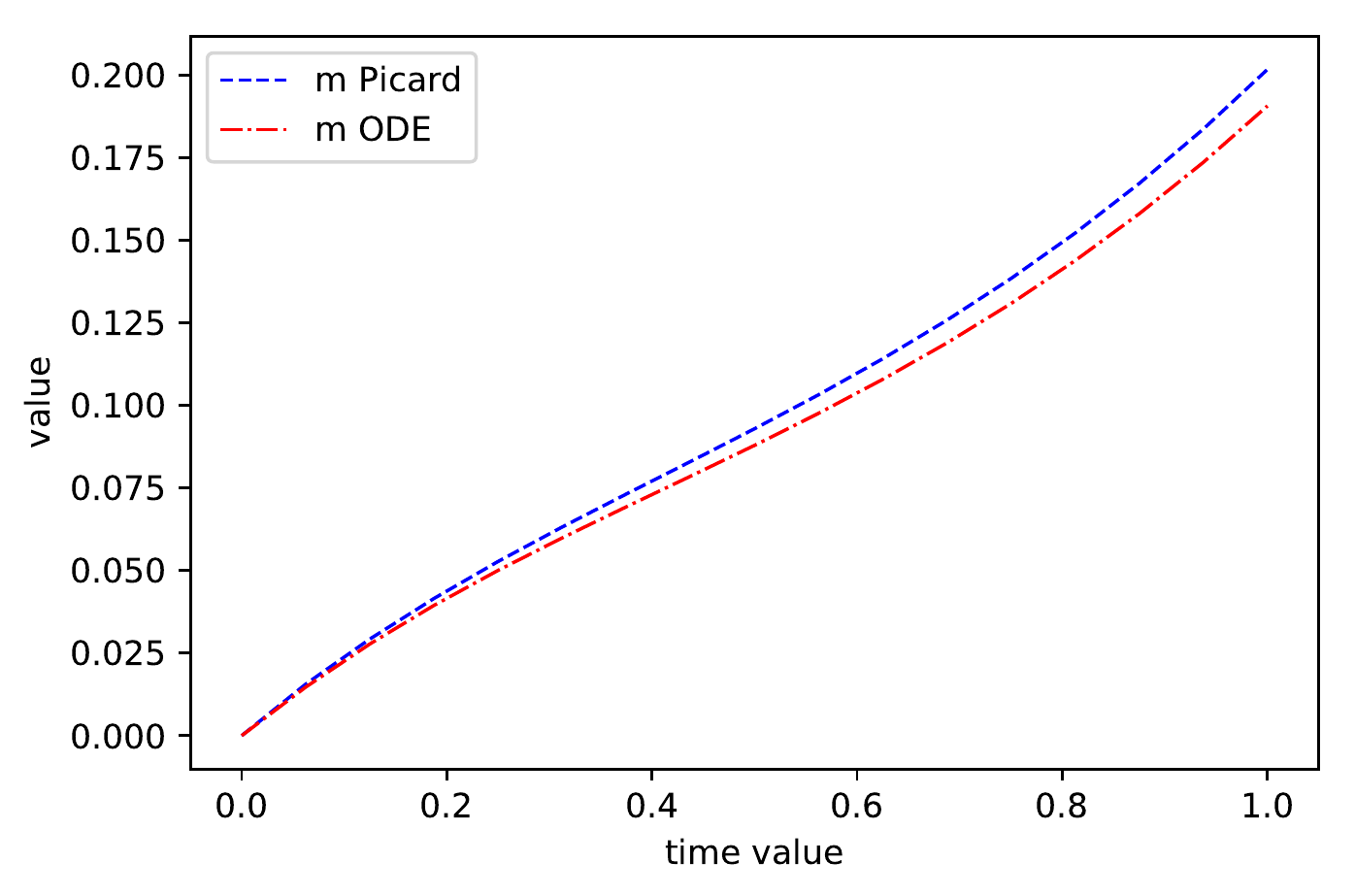}}\\
~
\subfigure[]{\label{fig:error 5d}\includegraphics[scale=0.45]{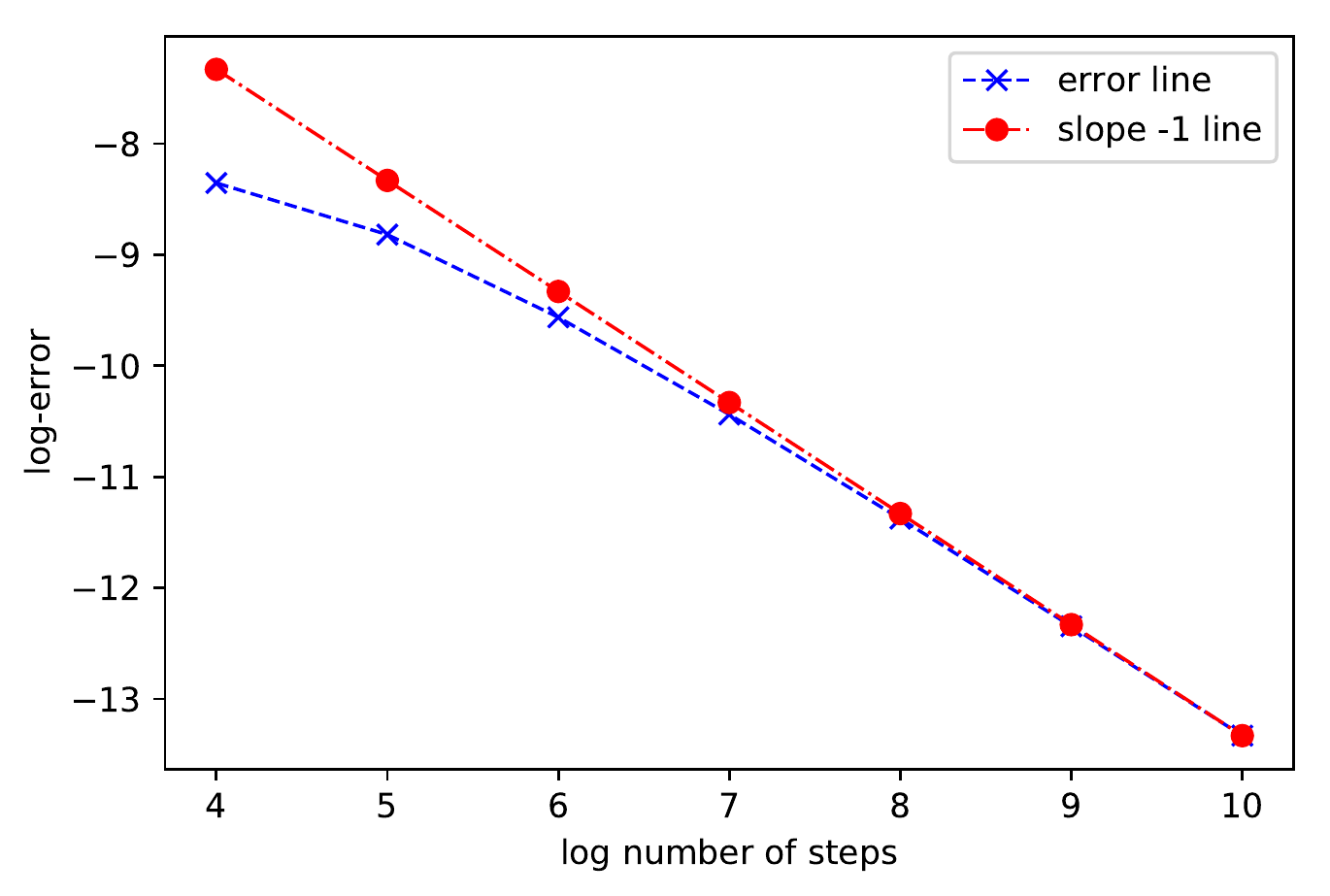}}
~
\subfigure[]{\label{fig:comparison 5d}\includegraphics[scale=0.45]{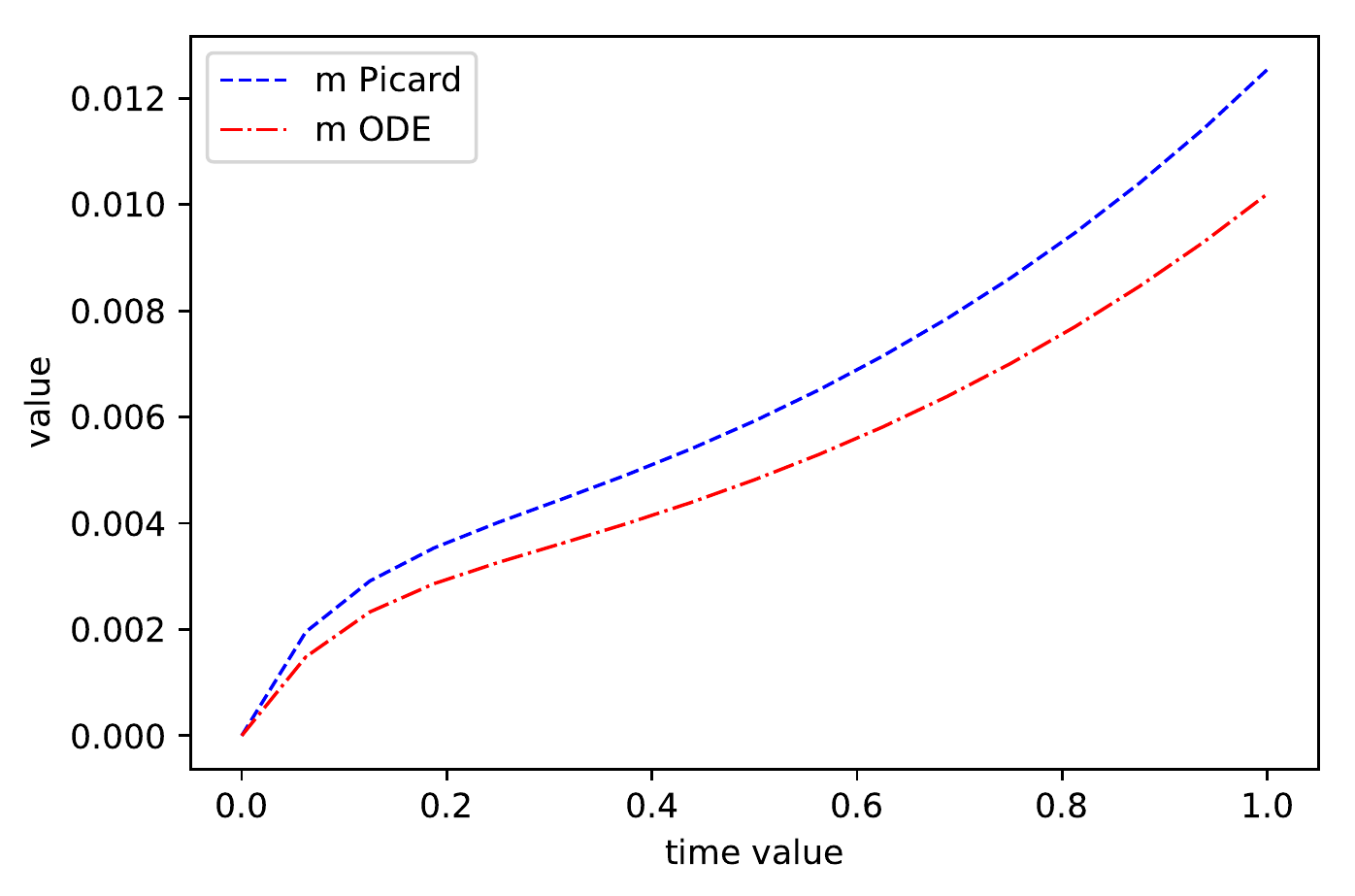}}\\
~
\caption{Error convergence rate (a) $d=2$ (c) $d=5$ and comparison of approximations (b) $d=2$ (d) $d=5$.}
\label{fig:comparison dim}
\end{figure}
The solution $(\bar{\a},\bar{\b})$ to \eqref{eq:ste15} then satisfies the following
\begin{align}
 \bar{\a}_t  & \equiv a I_d  ,  \quad
 \bar{\b}   =   \tilde{\beta}_t \mathbf{1},
\end{align}
where 
\begin{align}
\tilde{\beta}_t &:=  \Eb\big[\cos\big({\textstyle\sum^d_{i=1}}X^{(\bar{\a},\bar{\b}),i}_t\big)\big] =\frac{\hat{\mu}_{X^{(\bar{\a},\bar{\b})}_t}(\mathbf{1}) + \hat{\mu}_{X^{(\bar{\a},\bar{\b})}_t}(-\mathbf{1})}{2}
\intertext{(by Lemma \ref{lem:ito})}
& \,=  \frac{\ee^{-\frac{1}{2}\mathbf{1}^\top{C}_t \mathbf{1}}}{(1+\ee^{2a_0t})^d}\cos(d \, \tilde{m}_t),
\end{align}
with 
\begin{equation}
{C}_t= C^{\bar{\a}}_t =\frac{1}{2a}(\ee^{2a t} - 1 ) \theta, \qquad \tilde{m}_t =  \ee^{a t} \int^t_0\ee^{-a s} \tilde{\beta}_s \dd s.
\end{equation}
In the above, ${C}_t$ is the covariance matrix and $\tilde{m}_t$ is the mean of all the components of $X^{(\bar{\a},\bar{\b})}_t.$ Proceeding once again like in Example \ref{example: gaussian}, we obtain a suitable modification of the ODE \eqref{eq:ode m simple} for $\tilde{m}_t$, which can be solved numerically to obtain a reference benchmark for $\tilde{m}_t$ and $\tilde{\b}_t$. In Figure \ref{fig:comparison dim}, we plot quantities that are analogous to those in Figures \ref{fig:1d} and \ref{fig:1d jump}. We demonstrate that, for $a=0.25$ and randomly generated $\sigma$ matrix, the convergence rate is independent of dimension as proven in Theorem \ref{th:main}, and also provide approximations for $\tilde{m}_t $ using the discretized Picard iteration scheme for $n=2^4.$

\subsection{Convergence rate for non-integrable initial datum}
\label{example: multiple dimensions 2}
In the final example, we consider the case of a non-integrable initial law given by a multivariate $\gamma$-stable distribution with independent components. The characteristic function of $Y$ with $\gamma =1$ is then given as
\eqstar{
 \hat{\mu}_Y(\eta) =\Eb\bigl[\exp(i \langle \eta, Y\rangle) \bigr] =\exp\bigl(i \langle \eta, \mathbf{1} \rangle - \textstyle \sum^d_{k=1} |\eta_k|\bigr).
}
In the above formula, the shift parameter is represented by a unit vector $\mathbf{1}\in \Rb^d,$ and for each component, the skewness parameter is set to zero and the scale parameter is set to one. The above distribution does not admit a first moment. With this example, we wish to test the applicability of our method by relaxing the moment conditions on the initial datum. As expected, it turns out that the finite-moment assumptions on the initial distributions are not essential for the Picard iteration method that we propose (at least in some cases), but they are rather related to the fact that we proved the contraction properties in the Fourier space. 

We work with the  MK-V SDE setting of Section \ref{example: multiple dimensions 1}.
Applying Lemma \ref{lem:ito} it is easy to show that\eqstar{
\tilde{\beta}_t:= \Eb\big[\cos\big({\textstyle\sum^d_{i=1}}X^{(\bar{\a},\bar{\b}),i}_t\big)\big] = \frac{\hat{\mu}_{X^{(\bar{\a},\bar{\b})}_t}(\mathbf{1}) + \hat{\mu}_{X^{(\bar{\a},\bar{\b})}_t}(-\mathbf{1})}{2} = \ee^{-\frac{1}{2}\mathbf{1}^\top{C}_t \mathbf{1} - d \ee^{a t}} \cos \bigl( d(\ee^{a t} + \tilde{m}_t)\bigr),
}
and a benchmark for $\tilde{\beta}_t $ can be obtained by solving a suitable modification of the ODE \eqref{eq:ode m simple}. In Figure \ref{fig:comparison datum}, we demonstrate that even though the initial datum has an undefined mean, our Picard iteration scheme converges and the rate is independent of dimension as proven in Theorem \ref{th:main}. We also provide approximation for $\tilde{m}_t $ using the discretized Picard iteration scheme for $n=2^4.$

\begin{figure}[htbp]
\centering
\subfigure[]{\label{fig:error 2d stable}\includegraphics[scale=0.45]{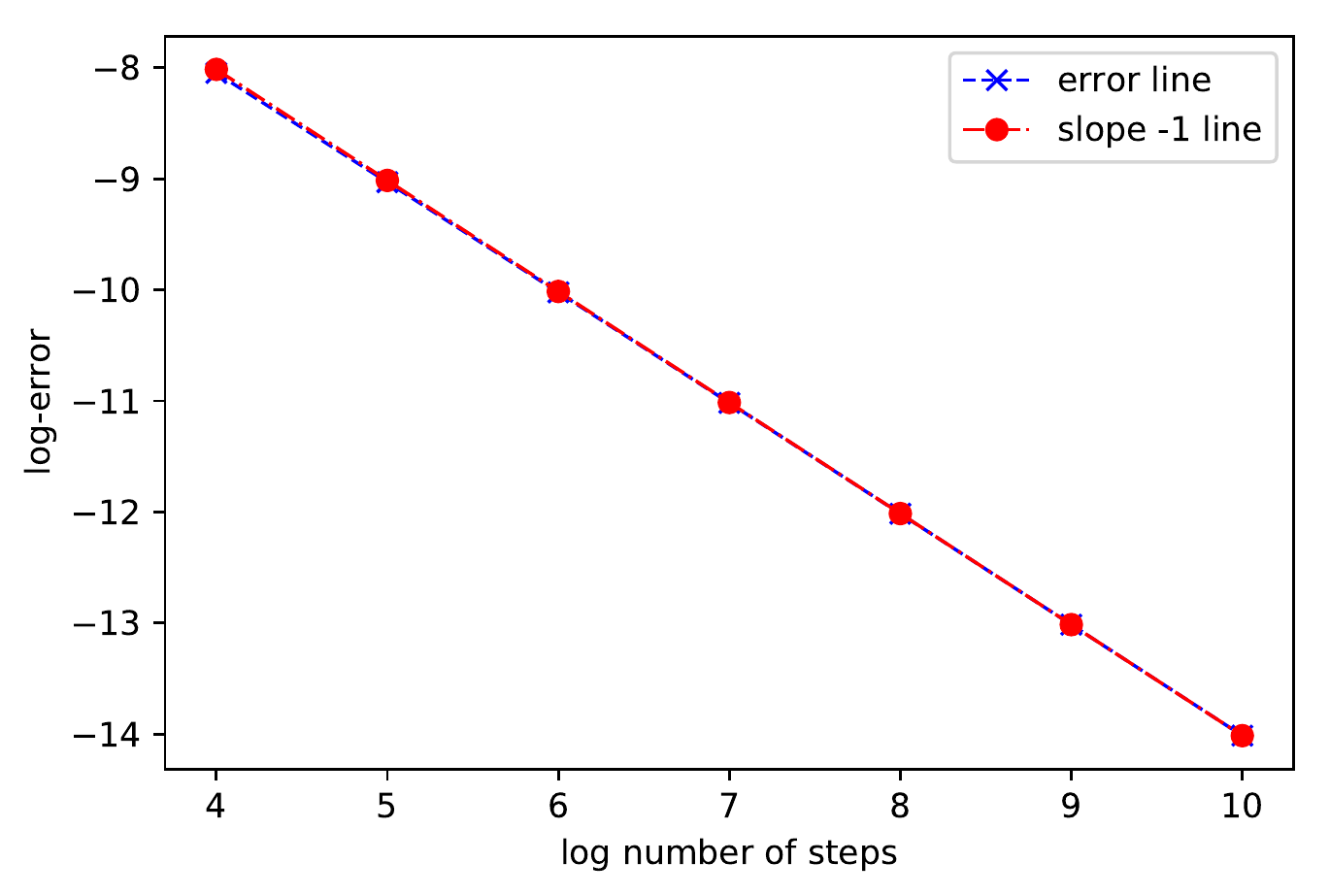}}
~
\subfigure[]{\label{fig:comparison 2d stable}\includegraphics[scale=0.45]{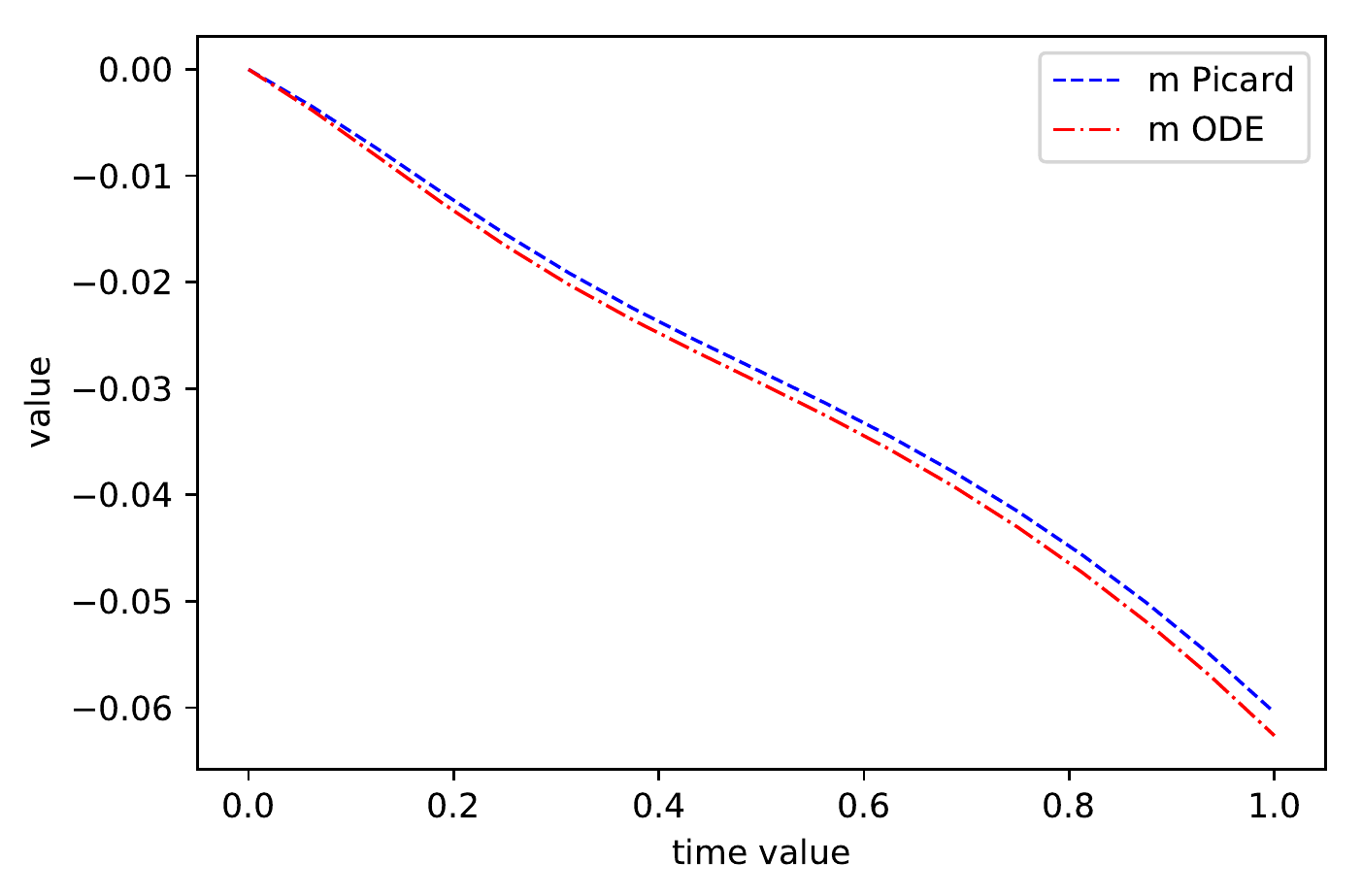}}\\
~
\caption{Error convergence rate and comparison of approximations for the model with initial datum with an undefined mean in $d=2.$}
\label{fig:comparison datum}
\end{figure}

\section*{Acknowledgement}
The authors would like to thank Prof.\ Emmanuel Gobet for the useful initial discussions on the theoretical results and convergence of the numerical scheme.

\appendix

\section{Proofs of Lemma \ref{lem:ito}, \ref{lemm:estimates_bis} and \ref{lemm:estimates_ter}
}
\label{app:proof_ito}
\begin{proof}[Proof of Lemma \ref{lem:ito}.]
To shorten notation, throughout the proof, we set ${X}_t = {X}^{(\a,\b)}_t$ and $\Phi_{s,t} = \Phi^{\alpha}_{s,t}$. 

Let us set $\tilde{X}_t=\Phi^{-1}_{0,t}X_t$, with $\Phi$ as in \eqref{eq:ode}. By It\^{o}'s formula we obtain that
\begin{equation}\label{eq:ste26}
\tilde{X}_t = Y + \int_0^t \Phi^{-1}_{0,s} \b_s \dd s + \int_0^t 
\Phi^{-1}_{0,s} \dd L_s,
\end{equation}
and thus, we have that
\begin{equation}\label{eq:ste24}
\hat{\mu}_{X_t}( \eta)=\Eb\big[ \exp\big(  i \big\langle \eta,
\Phi_{0,t}\tilde{X}_t  \big\rangle  \big) \big]=\Eb\big[ \exp\big(  i \big\langle \Phi^\top_{0,t} \eta ,\tilde{X}_t  \big\rangle  \big) \big]
= \Eb\big[ \exp\big(  i \big\langle \Phi^\top_{0,t} \eta, 
\tilde{X}_t - Y\big\rangle  \big) \big] \hat{\mu}_Y \big( \Phi^\top_{0,t}\eta\big).
\end{equation}
Set $Z_t:= \ee^{i \langle \xi, \tilde{X}_t - Y\rangle}.$ By  It\^{o}'s formula, we obtain that
\begin{align}
 Z_t &=1+ \int_0^t Z_s  \int_{\R^d} \Big( \ee^{{i \langle \xi , \Phi^{-1}_{0,s} y\rangle}} -1 -  {\bf 1}_{\{|y|<1\}} i\big\langle  \xi,  \Phi^{-1}_{0,s} y\big\rangle  \Big) \nu(\dd y)  \dd s  \\
 &\quad +  \int_0^t Z_s \Big(   i\big\langle  \xi,  \Phi^{-1}_{0,s} \b_s\big\rangle -\frac{1}{2} \big\langle    \xi ,\Phi^{-1}_{0,s} \s \big( \Phi^{-1}_{0,s} \s\big)^\top \xi  \big\rangle    \Big) \dd s    \\
 &\quad
 + i \int_0^t Z_s  \xi^\top  \Phi^{-1}_{0,s} \s \dd W_s  + \int_0^t Z_{s-} \int_{\R^d}  \Big( \ee^{{i \langle \xi , \Phi^{-1}_{0,s} y\rangle}} -1  \Big)\big({N}(\dd s, \dd y) - \nu (\dd y)\dd s\big).
\end{align}
Now set $\phi_t(\xi):=\Eb[ Z_t ]$. By the martingale property of the  It\^{o} and the jump integrals, combined with Fubini's theorem, we obtain that
$$
\phi_t(\xi) = 1 + \int_0^t \phi_s(\xi) \psi_s(\xi) \dd s, 
$$
where
$$
\psi_s(\xi) = \int_{\R^d} \Big( \ee^{{i \langle \xi , \Phi^{-1}_{0,s} y\rangle}} -1 -  {\bf 1}_{\{|y|<1\}} i\big\langle  \xi,  \Phi^{-1}_{0,s} y\big\rangle  \Big) \nu(\dd y) +  i\big\langle  \xi,  \Phi^{-1}_{0,s} \b_s\big\rangle -\frac{1}{2} \big\langle    \xi ,\Phi^{-1}_{0,s} \s \big( \Phi^{-1}_{0,s} \s\big)^\top \xi  \big\rangle.
$$
By differentiating both terms, we have that 
\begin{align}
\frac{\dd}{\dd t} \phi_t(\xi) = \phi_t(\xi) \psi_t(\xi), \quad t>0, \qquad \phi_0(\xi) = 1,
\end{align}
which yields that
$$
 \Eb\big[ \exp\big(  i \big\langle \xi, \tilde{X}_t - Y\big\rangle  \big) \big] = \phi_t(\xi) =  \ee^{\int_0^t \psi_s(\xi)\dd s},
$$
which in turn, combined with \eqref{eq:ste24}, yields \eqref{eq:ste13} and concludes the proof.
\end{proof}

\begin{proof}[Proof of Lemma \ref{lemm:estimates_bis}]
To ease the notation, we set $\Phi_{s,t}=\Phi^{\a}_{s,t}$, $C_t=C^{\a}_t$, $m_t=m^{\a,\beta}_t$ and $n_t(\eta)=n^{\a}_t(\eta)$. 

The inequality \eqref{eq:est_1} is a straightforward consequence of Gr\"onwall's Lemma. We now prove \eqref{eq:est_1_b}. By \eqref{eq:ode} it holds that 
\begin{equation}\label{eq:cauchy_C}
\left\{
\begin{aligned}
&\frac{\dd}{\dd t}C_{t}    =\theta + 2 \alpha_t C_{t} , \quad s<t\leq T, \\
&\Phi_{0}=0,
\end{aligned}
\right.
\end{equation}
which means that $C$ is absolutely continuous and $C_t =t \theta +2 \int_{0}^t \a_u C_u \dd u$. Using Gr\"onwall's Lemma again yields \eqref{eq:est_1_b}. The proof of \eqref{eq:est_1_c} is completely analogous. 

To prove \eqref{eq:estim_n}, set 
\begin{equation}\label{eq:def_f}
f(x):= \int_{\R^d} \Big( \ee^{i \langle x , y\rangle} -1 -  {\bf 1}_{\{|y|<1\}} i\langle   x, y\rangle  \Big) \nu(\dd y), \quad x\in\R^d.
\end{equation}
Now, for any $0\leq \delta<1$, Taylor's theorem yields that
\begin{equation}
|f(x) | \leq 2 \bigg(  |x|^2  \int_{|y|< \delta} |y|^2 \nu(\dd y)  + ( |x| +1 ) \int_{|y|\geq \delta} \nu(\dd y) \bigg), \quad x\in\R^d,
\end{equation}
which, combined with \eqref{eq:est_1} gives that
\begin{equation}\label{eq:upper_f}
\big|  f \big(  \Phi_{s,t}^\top \eta  \big)  \big| \leq 2  \ee^{ 2T \|\alpha\|_{T,0} }\bigg(  |\eta|^2  \int_{|y|< \delta} |y|^2 \nu(\dd y)  + (|\eta|+1) \int_{|y|\geq \delta}  \nu(\dd y) \bigg).
\end{equation}
This, together with the fact that
\begin{equation}\label{eq:rep_f}
n_t(\eta) = \int_0^t  f \big( \Phi_{s,t}^\top \eta  \big)  \dd s,
\end{equation}
proves \eqref{eq:estim_n}.

We finally prove \eqref{eq:estimate_quad_form}. We have that
\begin{equation}\label{eq:estim_proof_1}
|\langle \eta, C_t\eta\rangle| \geq \lambda_{\text{min}}(\theta)  \int_{0}^t    \eta^\top  \Phi_{s,t} \Phi_{s,t}^\top   \eta \,  \dd s,
\end{equation}
and
\begin{equation}\label{eq:estim_proof_2}
\lambda_{\text{min}}\Big(  \Phi_{s,t} \Phi_{s,t}^\top  \Big) =\frac{1}{ \lambda_{\text{max}}\Big( \big( \Phi_{s,t}  \Phi_{s,t}^\top \big)^{-1}  \Big)} = \frac{1}{  \big| \big( \Phi_{s,t} \big)^{-1} \big|^2  }.
\end{equation}
Noting that $\Phi_{s,t}^{-1} = I_d - \int_0^t   \Phi_{s,u}^{-1} \alpha_u \dd u $, and applying Gr\"onwall's Lemma yields that
\begin{equation}\label{eq:estim_proof_3}
\big| \Phi_{s,t}^{-1} \big| \leq \ee^{T \|\a \|_{T,0}}.
\end{equation}
Eventually, \eqref{eq:estimate_quad_form} results from \eqref{eq:estim_proof_1}--\eqref{eq:estim_proof_3}.
\end{proof}

\begin{proof}[Proof of Lemma \ref{lemm:estimates_ter}.]

To ease the notation, we set $\Phi_{s,t}=\Phi^{\a}_{s,t}$, $C_t=C^{\a}_t$, $m_t=m^{\a,\beta}_t$ and $n_t(\eta)=n^{\a}_t(\eta)$. 

By definition of $\Phi,$ we obtain that
\begin{equation}
|\Phi_{s,t} - \Phi_{s,t'}| = \Big|   \int_{t}^{t'} \Phi_{s,u} \a_u  \dd u \Big|  \leq \int_{t}^{t'}  |\Phi_{s,u}|  |\a_u|   \dd u ,
\end{equation}
and \eqref{eq:est_1_ter} follows from \eqref{eq:est_1}.

We now prove \eqref{eq:est_1_b_ter}. By definition of $C$ and by triangular inequality, we obtain that
\begin{equation}\label{eq:ctminuscprime}
|C_t - C_{t'}| \leq  \int_t^{t'} \big| \Phi_{s,t'} \s \s^\top \Phi_{s,t'}^\top  \big|  \dd s + \int_0^t \big| \big(\Phi_{s,t'} \s \s^\top \Phi_{s,t'}^\top   \big)  -  \big(\Phi_{s,t} \s \s^\top \Phi_{s,t}^\top   \big)  \big| \dd s.
\end{equation}
Applying \eqref{eq:est_1} and \eqref{eq:est_1_ter} yields that
\begin{equation}
\big| \Phi_{s,t'} \s \s^\top \Phi_{s,t'}^\top  \big|  \leq   \ee^{2 T \| \a \|_{T,0}} |\theta|,
\end{equation}
and
\begin{align}
\big| \big(\Phi_{s,t'} \s \s^\top \Phi_{s,t'}^\top   \big)  -  \big(\Phi_{s,t} \s \s^\top \Phi_{s,t}^\top   \big)  \big| &\leq 
\big| \Phi_{s,t'} \s \s^\top (\Phi_{s,t'}-\Phi_{s,t})^\top  \big| + 
\big| (\Phi_{s,t'}-\Phi_{s,t} ) \s \s^\top \Phi_{s,t}^\top  \big| \\
& \leq 2 \| \a \|_{T,0}  \ee^{2T\| \a \|_{T,0}}  |\theta| (t'-t).
\end{align}
Plugging these into \eqref{eq:ctminuscprime}, we have that
\begin{equation}
|C_t - C_{t'}| \leq (1+2 T \| \a \|_{T,0}) \ee^{2 T \| \a \|_{T,0}} |\theta| (t'-t), 
\end{equation}
which proves \eqref{eq:est_1_b_ter}. The proof of \eqref{eq:est_1_c_ter} is analogous and thus, is omitted. 

We finally prove \eqref{eq:estim_n_ter}. Consider the function $f$ as defined in \eqref{eq:def_f}. For any $M>0$, an application of Lagrange's mean-value theorem yields the following
\begin{equation}\label{eq:ste150}
|f(x)-f(x')| \leq  
2 (M+1)\, \bar{n}\,|x-x'|, \quad |x|,|x'|<M,
\end{equation}
with $\bar{n}$ as in \eqref{eq:ste52}. This, together with \eqref{eq:est_1}, yields that
\begin{align}
\big| f\big(  \Phi_{s,t'}^\top \eta  \big)-  f\big( \Phi_{s,t}^\top \eta  \big)  \big| 
&\leq 2 \big( |\eta| \ee^{ T \| \a \|_{T,\lambda}} +1 \big) \bar{n}   \big|  ( \Phi_{s,t'}^\top - \Phi_{s,t}^\top ) \eta \big| 
\intertext{(by \eqref{eq:est_1_ter})}
&\leq (t'-t)2\|\a \|_{T,0} \ee^{2T \|\a \|_{T,0}} \big(|\eta|^2+|\eta|\big)  \bar{n}, \label{eq:estimate_f_b}
\end{align}
while \eqref{eq:upper_f} gives the following
\begin{equation} \label{eq:estimate_f_c}
\big| f\big(  \Phi_{s,t'}^\top \eta  \big)  \big| \leq 2  \ee^{ 2T \|\alpha\|_{T,0} } \big(|\eta|^2+|\eta|+1\big) \bar{n}.
\end{equation}
Now, by \eqref{eq:rep_f} and by triangular inequality we obtain that
\begin{align}
  | n_t(\eta) - n_{t'}(\eta) | &\leq \int_{t}^{t'} \big| f\big( \Phi_{s,t'}^\top \eta  \big) \big| \dd s + \int_0^t \big| f\big( \Phi_{s,t'}^\top \eta  \big)-  f\big( \Phi_{s,t}^\top \eta  \big) \big| \dd s
  \intertext{(by \eqref{eq:estimate_f_b}-\eqref{eq:estimate_f_c})}
 &\leq 2(1+T  \|\alpha\|_{T,0}) \ee^{ 2T \|\alpha\|_{T,0} } \big(|\eta|^2+|\eta|+1\big) \bar{n} (t'-t),
  \end{align}
which proves \eqref{eq:estim_n_ter} and concludes the proof.

\end{proof}

\section{Proofs of Lemma \ref{lem:estimates} and Lemma \ref{lemm:estim_der_n}}
\label{app:estimates_apriori}

\begin{proof}[Proof of Lemma \ref{lem:estimates}.]
We first show \eqref{eq:ste104}. Set $w:=C^{\alpha}-C^{\alpha'}$. By \eqref{eq:cauchy_C}, we obtain that $w$ is the solution to the following equation
\begin{equation}\label{eq:ode_ter}
\left\{
\begin{aligned}
&w'_t    = 2 \big( \alpha_t C^{\alpha}_t - \alpha'_t C^{\alpha'}_t \big), \quad t>0\\
&w_0=0,
\end{aligned}
\right.
\end{equation}
which means that $w$ is absolutely continuous and is given as 
\begin{equation}
w_t = 2 \int_0^t  \big( \alpha_s C^{\alpha}_s - \alpha'_s C^{\alpha'}_s \big)  \dd s = 2 \int_0^t  \Big( \alpha_s w_s + \big(\alpha_s - \alpha'_s \big) C^{\alpha'}_s \Big)  \dd s,
\end{equation}
and thus, satisfies the following
\begin{equation}
|w_t| \leq 2 \int_0^t \Big( | \alpha_s | | w_s | + | \alpha_s - \alpha'_s | \big| C^{\alpha'}_s \big| \Big)  \dd s.
\end{equation}
Now, by Gr\"onwall's Lemma we obtain that
\begin{equation}
|w_t| \leq 2 \,\ee^{2 \int_0^t  | \alpha_s |   \dd s} \int_0^t | \alpha_s - \alpha'_s | \big| C^{\alpha'}_s \big|   \dd s ,
\end{equation}
which, together with \eqref{eq:est_1_b}, yields that
\begin{equation}\label{eq:ste105}
|w_t| \leq  2 t |\theta| \,\ee^{2 T ( \|\alpha\|_{T,0}  +  \|\alpha'\|_{T,0} )   } \int_0^t | \alpha_s - \alpha'_s |   \dd s  \leq  2 t |\theta| \,\ee^{4 T  \| a \|_{L^{\infty}}   } \int_0^t | \alpha_s - \alpha'_s |   \dd s.
\end{equation}
Now, combining the following 
\begin{equation}
|\langle \eta, C^{\alpha}_t\eta\rangle - \langle \eta, C^{\alpha'}_t\eta\rangle  | \leq |\eta| \big| \big( C^{\alpha}_t - C^{\alpha'}_t \big)\eta \big| \leq |\eta|^2 \big| C^{\alpha}_t - C^{\alpha'}_t \big| = \,|\eta|^2 |w_t|
\end{equation}
with \eqref{eq:ste105} proves \eqref{eq:ste104}. 

The proof of \eqref{eq:ste104_quin} is analogous, and so are the proofs of 
\eqref{eq:ste104bis} and \eqref{eq:ste104ter} after observing that 
\begin{equation}
\left\{
\begin{aligned}
&\frac{\dd}{\dd t}  \big(m^{\a,\beta}_t - m^{\a',\beta}_t \big)  = \alpha_t m^{\alpha,\beta}_t - \alpha'_t m^{\alpha',\beta}_t , \quad t>0,\\
&m^{\a,\beta}_0 - m^{\a',\beta}_0=0,
\end{aligned}
\right.
\end{equation}
\begin{equation}
\left\{
\begin{aligned}
&\frac{\dd}{\dd t}  \big(m^{\a,\beta}_t - m^{\a,\beta'}_t \big)  = \alpha_t \big(m^{\a,\beta}_t - m^{\a,\beta'}_t \big) + (\beta_t -\beta'_t) , \quad t>0,\\
&m^{\a,\beta}_0 - m^{\a,\beta'}_0=0.
\end{aligned}
\right.
\end{equation}

We next show \eqref{eq:ste104quat}. By \eqref{eq:ste150} and \eqref{eq:est_1} we obtain that
\begin{align}
\big| f\big(  (\Phi^{\a}_{s,t})^\top \eta  \big)-  f\big( (\Phi^{\a'}_{s,t})^\top \eta  \big)  \big| &\leq 2 \big( |\eta| \ee^{ T \| a \|_{L^{\infty}}} +1 \big) \bar{n}   \big|   (\Phi^{\a}_{s,t})^\top \eta - (\Phi^{\a'}_{s,t})^\top \eta \big| 
\intertext{(by \eqref{eq:ste104_quin})}
&\leq \kappa  |\eta| (|\eta| + 1)  \int_0^t | \alpha_s - \alpha'_s | \dd s,
\end{align}
which in turn, combined with the following
$$
n^{\alpha}_t(\eta) - n^{\alpha'}_t(\eta) = \int_0^t \Big( f\big( (\Phi^{\a}_{s,t})^\top \eta  \big)-  f\big( (\Phi^{\a'}_{s,t})^\top \eta  \big) \Big) \dd s,
$$
yields \eqref{eq:ste104quat} and concludes the proof.
\end{proof}

\begin{proof}[Proof of Lemma \ref{lemm:estim_der_n}.] By \eqref{eq:der_n_first} we have that
\begin{align}
\partial_{\eta_j}\big( n^{\alpha}_t- n^{\alpha'}_t\big)(\eta) & = i \int_{0}^t  \int_{\R^d}  \big( \Phi^{\alpha}_{s,t} y -   \Phi^{\alpha'}_{s,t} y \big)_j   \Big( \ee^{i \langle  \eta , \Phi^{\alpha}_{s,t} y\rangle} -{\bf 1}_{\{|y|<1\}}\Big)   \nu(\dd y)   \dd s,   \\
&\quad +  i \int_{0}^t   \int_{\R^d}  \big( \Phi^{\alpha'}_{s,t} y \big)_j \Big( \ee^{i \langle  \eta , \Phi^{\alpha}_{s,t} y\rangle} -\ee^{i \langle  \eta , \Phi^{\alpha'}_{s,t} y\rangle}\Big) \nu(\dd y)  \dd s,
\end{align}
for any $j=1,\cdots,d$, from which we obtain that
\begin{align}
|\partial_{\eta_j}\big( n^{\alpha}_t- n^{\alpha'}_t\big)(\eta)| & \leq  \int_{0}^t  \int_{\R^d}  \big| \Phi^{\alpha}_{s,t}  -   \Phi^{\alpha'}_{s,t}\big| |y|   \Big( \big| \Phi^{\alpha}_{s,t}  \big| |y| |\eta| +{\bf 1}_{\{|y|\geq1\}}\Big)   \nu(\dd y)   \dd s,   \\
&\quad +   \int_{0}^t   \int_{\R^d}  \big| \Phi^{\alpha'}_{s,t}  \big| |y|^2 |\eta|  \big| \Phi^{\alpha}_{s,t}  -   \Phi^{\alpha'}_{s,t}\big| \nu(\dd y)  \dd s 
\intertext{by \eqref{eq:est_1}}
& \leq 2\big( \ee^{T \|\a \|_{T,0}}+\ee^{T \|\a' \|_{T,0}}\big) (|\eta| +1)  \int_{\R^d}|y|^2 \nu(\dd y) \int_{0}^t    \big| \Phi^{\alpha}_{s,t}  -   \Phi^{\alpha'}_{s,t}\big| \dd s.
\end{align}
The estimate \eqref{eq:ste104quat_bis} for $m=1$ then follows from \eqref{eq:ste104quat}. The proof for $m>1$ is completely analogous and thus, it is omitted.
\end{proof}
\bibliographystyle{acm}
\bibliography{mckeanfourier}

\begin{thebibliography}{10}

\bibitem{andreis2017mckean}
{\sc Andreis, L., Pra, P.~D., and Fischer, M.}
\newblock Mckean-vlasov limit for interacting systems with simultaneous jumps.
\newblock {\em arXiv preprint arXiv:1704.01052\/} (2017).

\bibitem{anto:koha:02}
{\sc Antonelli, F., and {Kohatsu-Higa}, A.}
\newblock Rate of convergence of a particle method to the solution of the
  {M}c{K}ean-{V}lasov equation.
\newblock {\em Ann. Appl. Probab. 12}, 2 (2002), 423--476.

\bibitem{belomestny2017projected}
{\sc Belomestny, D., and Schoenmakers, J.}
\newblock Projected particle methods for solving mckean-vlasov stochastic
  differential equations.
\newblock {\em arXiv preprint arXiv:1708.08087\/} (2017).

\bibitem{chau:garc:15}
{\sc {Chaudru de Raynal}, P., and {Garcia Trillos}, C.}
\newblock A cubature based algorithm to solve decoupled {M}c{K}ean-{V}lasov
  forward-backward stochastic differential equations.
\newblock {\em Stochastic Process. Appl. 125}, 6 (2015), 2206--2255.

\bibitem{dubner1968numerical}
{\sc Dubner, H., and Abate, J.}
\newblock Numerical inversion of laplace transforms by relating them to the
  finite fourier cosine transform.
\newblock {\em Journal of the ACM (JACM) 15}, 1 (1968), 115--123.

\bibitem{gobet2016analytical}
{\sc Gobet, E., and Pagliarani, S.}
\newblock Analytical approximations of non-linear {SDE}s of {M}c{K}ean-{V}lasov
  type.
\newblock {\em Journal of {M}athematical {A}nalysis and {A}pplications\/}
  (2018).

\bibitem{graham1992mckean}
{\sc Graham, C.}
\newblock Mckean-vlasov it{\^o}-skorohod equations, and nonlinear diffusions
  with discrete jump sets.
\newblock {\em Stochastic processes and their applications 40}, 1 (1992),
  69--82.

\bibitem{haji2018multilevel}
{\sc Haji-Ali, A.-L., and Tempone, R.}
\newblock Multilevel and multi-index monte carlo methods for the mckean--vlasov
  equation.
\newblock {\em Statistics and Computing 28}, 4 (2018), 923--935.

\bibitem{hao2016mean}
{\sc Hao, T., and Li, J.}
\newblock Mean-field sdes with jumps and nonlocal integral-pdes.
\newblock {\em Nonlinear Differential Equations and Applications NoDEA 23}, 2
  (2016), 17.

\bibitem{jourdain08Levy}
{\sc Jourdain, B.}
\newblock Nonlinear {SDE}s driven by {L}\'evy processes and related {PDE}s.
\newblock {\em Alea 4\/} (2008), 1--29.

\bibitem{kou2002jump}
{\sc Kou, S.~G.}
\newblock A jump-diffusion model for option pricing.
\newblock {\em Management Science 48}, 8 (2002), 1086--1101.

\bibitem{pascucci2011pde}
{\sc Pascucci, A.}
\newblock {\em PDE and martingale methods in option pricing}.
\newblock Springer Science \& Business Media, 2011.

\bibitem{sun2017ito}
{\sc Sun, Y., Yang, J., and Zhao, W.}
\newblock It{\^o}-taylor schemes for solving mean-field stochastic differential
  equations.
\newblock {\em Numerical Mathematics: Theory, Methods and Applications 10}, 4
  (2017), 798--828.

\bibitem{Sznitman91}
{\sc Sznitman, A.-S.}
\newblock Topics in propagation of chaos.
\newblock In {\em \'{E}cole d'\'{E}t\'e de {P}robabilit\'es de {S}aint-{F}lour
  {XIX}---1989}, vol.~1464 of {\em Lecture Notes in Math.} Springer, Berlin,
  1991, pp.~165--251.

\bibitem{szpruch2017iterative}
{\sc Szpruch, L., Tan, S., and Tse, A.}
\newblock Iterative particle approximation for mckean-vlasov sdes with
  application to multilevel monte carlo estimation.
\newblock {\em arXiv preprint arXiv:1706.00907\/} (2017).

\bibitem{tala:vail:03}
{\sc Talay, D., and Vaillant, O.}
\newblock A stochastic particle method with random weights for the computation
  of statistical solutions of {M}c{K}ean-{V}lasov equations.
\newblock {\em Ann. Appl. Probab. 13}, 1 (2003), 140--180.

\end{thebibliography}
\end{document}